\documentclass[final,1p,times]{elsarticle}

\usepackage{amsmath}
\usepackage{amssymb}
\usepackage{amsfonts}
\usepackage{amscd}
\usepackage{amsthm}
\usepackage{mathrsfs}

\numberwithin{equation}{section}

\allowdisplaybreaks[4]

\theoremstyle{plain}

\newtheorem{thm}{Theorem}[section]
\newtheorem{lem}{Lemma}[section]

\theoremstyle{definition}

\theoremstyle{remark}

\usepackage{tikz}
\usetikzlibrary{
	arrows.meta,
	positioning,
	shapes.multipart
}

\usepackage{graphicx}

\usepackage{pgfplots}
\pgfplotsset{compat=newest}

\usepackage{subcaption}

\usepackage{booktabs}
\usepackage{longtable}
\usepackage{adjustbox}

\usepackage{verbatim}
\usepackage{xcolor}
\usepackage{titletoc}
\usepackage{float}

\RequirePackage{CJKnumb}

\newcommand{\be}{\begin{equation}}
	\newcommand{\ee}{\end{equation}}

\biboptions{sort&compress}
\usepackage[
pagebackref,
colorlinks=true,
citecolor=green,
linkcolor=red,
urlcolor=blue
]{hyperref}

\journal{***}
\begin{document}
	\begin{frontmatter}
			\title{Stability and Hopf Bifurcation  of a Delayed SVIRS Epidemic Model with Media Coverage}
		\author{Songbo Hou \corref{cor1}}
	\ead{housb@cau.edu.cn}
	\address{Department of Applied Mathematics, College of Science, China Agricultural University,  Beijing, 100083, P.R. China}
	\author{Xinxin Tian}
	\ead{txx@cau.edu.cn}
	\address{Department of Applied Mathematics, College of Science, China Agricultural University,  Beijing, 100083, P.R. China}
	
	\cortext[cor1]{Corresponding author: Songbo Hou}
		\begin{abstract}
This paper formulates and analyzes a delayed SVIRS epidemic model incorporating media coverage effects, vaccination, waning immunity, temporary post-recovery immunity, saturated treatment, and delayed behavioral responses induced by media coverage. The positivity and uniform boundedness of solutions are established, the basic reproduction number is derived, and the local and global asymptotic stability of the disease-free and endemic equilibria is investigated. Taking the media-induced behavioral delay as the Hopf bifurcation parameter, a critical delay threshold is obtained, beyond which the endemic equilibrium loses stability and periodic oscillations emerge. Center manifold and normal form theories are applied to determine the direction of the local Hopf bifurcation and the stability of the bifurcating periodic solutions, while a global Hopf bifurcation theorem is used to establish the unbounded continuation of the periodic solution branch. Numerical simulations confirm the theoretical results and indicate that stronger media intervention can suppress epidemic oscillations and enhance system stability. These findings reveal the coupled effects of multiple epidemiological mechanisms and delayed media responses, providing theoretical support for the design of effective infectious disease control strategies.
	\end{abstract}	
		\begin{keyword}  Delayed SVIRS model\sep media coverage\sep global asymptotic stability\sep Hopf bifurcation 
			\MSC [2020] 34K20, 34K18, 34K19, 92D30
		\end{keyword}
	\end{frontmatter}
\section{Introduction}
Diseases such as SARS, H1N1, and COVID-19 have repeatedly caused major public health crises and economic disruptions worldwide over recent decades. Media coverage now plays an essential role in infectious disease prevention and control. Through various channels, including television, the internet, and social media platforms, epidemic-related information can spread rapidly and widely among the public. This not only encourages individuals to adopt protective measures—such as mask-wearing, vaccination, and social distancing—to reduce the risk of infection but also strongly influences the formulation and implementation of public health interventions and policies \cite{WANG20231,Xiao2015Media}. In this context, mathematical modelling provides an effective approach for quantitatively characterising media-related mechanisms and systematically investigating their effects on disease transmission. Such studies are of considerable theoretical and practical importance for improving epidemic control strategies and responding to public health emergencies \cite{mma.5438}.

The public's response to infectious diseases largely depends on their perception of risk. After obtaining epidemic-related information and understanding transmission mechanisms through media reports, individuals tend to adopt protective measures, such as reducing social contact and enhancing personal protection, to lower the risk of infection. Existing studies have shown that media coverage and health education can effectively suppress disease transmission. Enhanced media intervention may significantly reduce infection rates, while changes in individual behavior are closely associated with the amount of epidemic information received. Therefore, disseminating prevention and control measures through media channels has emerged as a key strategy for controlling the spread of infectious diseases \cite{Rahman2007Media,Cui2008SIS,Kiss2010Impact}.

To quantitatively characterize these processes, many infectious disease models incorporating media effects have been developed. The transmission coefficient is often modeled as a nonlinear decreasing function of the infected population. In \cite{Cui2008Impact}, Cui et al. modeled media influence using an exponential decay term of the form $\mu e^{-mI}$. Li and Cui adopted the saturation function $\beta_1-\dfrac{\beta_2 I}{m+I}$ to characterize the reduction in contact rate caused by media effects \cite{Li2009Effect}. Xiao et al. further considered the rate of change in case numbers and introduced a more complex media-related function $e^{-M(I,dI/dt)}$ \cite{Xiao2013Dynamics}. These studies reveal, from different perspectives, how media interventions regulate transmission dynamics and provide important insights into the effects of media coverage on epidemic spread. Additional related studies can be found in \cite{Liu2007Media,Collinson2014Modelling,Yan2016Media,eltit,Song2018}.

\begin{figure}[htbp]
	\centering
	\includegraphics[width=0.7\textwidth]{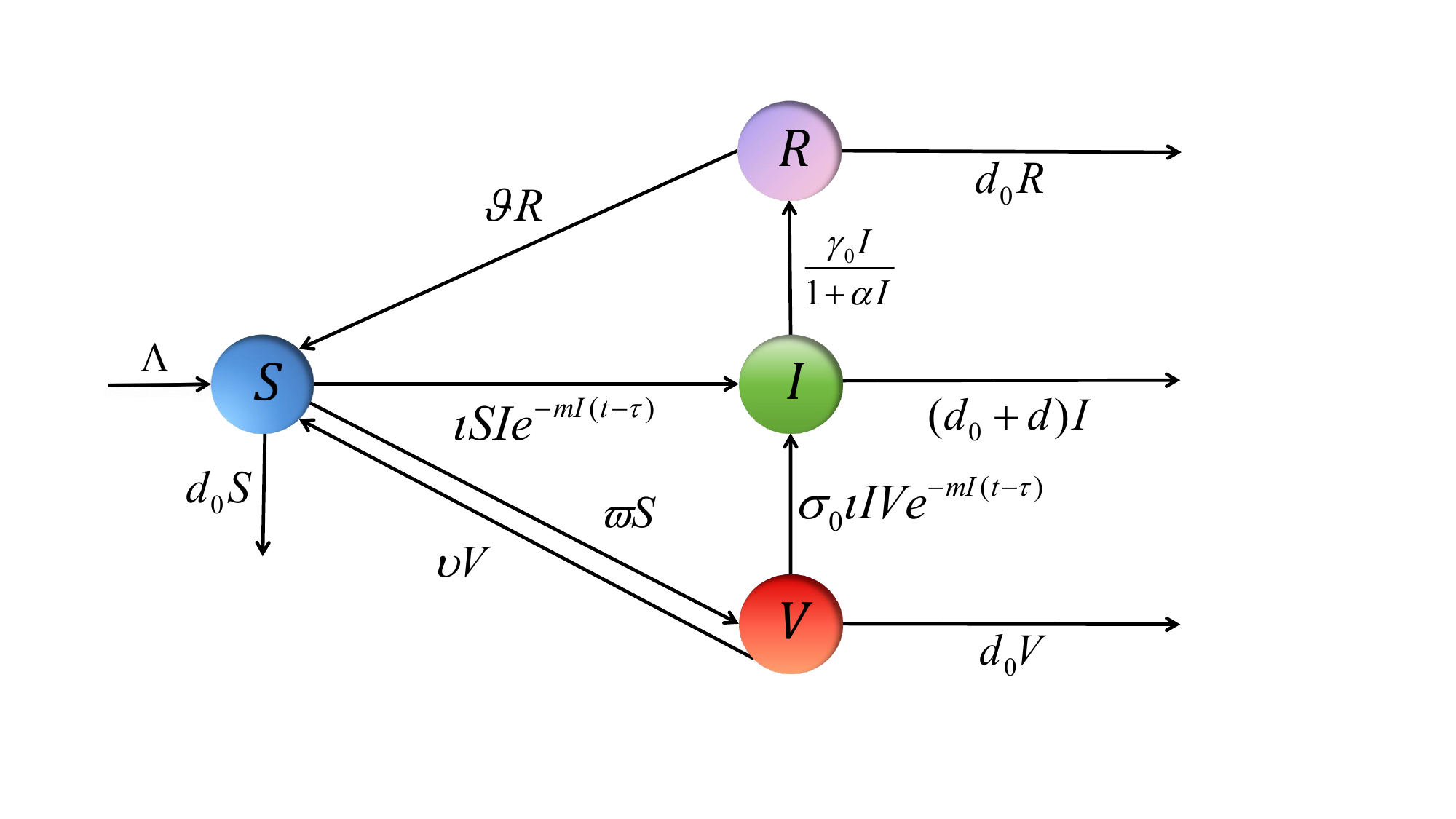}
	\caption{Compartmental structure of the proposed SVIRS model}
	\label{fig:00}
\end{figure}

However, in practice, both media reporting and public responses involve significant time delays. Such delays are reflected not only in the release of epidemic information but also in the process by which the public receives information and adjusts protective behaviors accordingly. Therefore, incorporating time delays into epidemic models can better reflect the actual disease transmission process. Recently, researchers have increasingly incorporated delays into media-related epidemic models to investigate the effects of delayed responses on system dynamics. For example, Song et al. introduced dual time delays into a media-influenced epidemic model and analyzed the resulting delay-induced local and global bifurcations \cite{Song2019}. Ma et al. showed in a behavioral intervention model that increasing delays may trigger Hopf bifurcation \cite{260195}. Zhao et al. demonstrated that, in an SIR model with media delay, local Hopf bifurcation may extend to global bifurcation once the delay exceeds a critical threshold \cite{Zhao2017}. Related studies can also be found in \cite{jie,WANG20231}.

Although considerable progress has been achieved in delayed epidemic models with media effects, many existing studies neglect waning vaccine-induced immunity, temporary immunity loss in recovered individuals, and treatment saturation caused by limited medical resources. As a consequence, the disease dynamics generated by complex prevention and control mechanisms may not be comprehensively represented within these modeling frameworks. At the same time, vaccination, as one of the core prevention and control measures, has been extensively studied in epidemiological models. In recent years, numerous scholars have conducted in-depth research on vaccination models and constructed various SVIR-type compartmental models. To study the prevention of pertussis and tuberculosis, Kribs-Zaleta et al. extended the SIS model by adding a vaccinated compartment $V$ \cite{Kribs-Zaleta2000}. Wang et al. established an age-structured SVIR model to discuss susceptibility characteristics and vaccine effectiveness \cite{10.1093/imamat/hxx020}. Additional studies on vaccine effects can be found in \cite{AZMI2026114250,article}. Notably, vaccine protection wanes over time, and the immunity acquired by recovered individuals is not permanent \cite{goel2020deterministic,doi:10.1142/S0218339025500093}. Additionally, when infection scales are large, medical resources can become saturated, leading to a decrease in the actual cure rate \cite{Kumar2019Nonlinear}.

Based on the above research, this paper comprehensively extends the classic SVIR model \cite{Xinjie} to construct an SVIRS infectious disease model  incorporating media-related delay, waning vaccine-induced immunity, waning temporary immunity in recovered individuals, and saturation of medical resources.  The model incorporates the exponential delay term $e^{-mI(t-\tau)}$ into the transmission process to capture the time-lag effects associated with media coverage and public reactions, while a saturated recovery rate is adopted to describe constrained medical resources. The proposed epidemic model is formulated as follows:
\begin{equation}\label{1.1}
	\left\{
	\begin{aligned}
		\frac{dS(t)}{dt} &= \Lambda - \iota I(t)e^{-mI(t-\tau)}S(t) - ( d_0 + \varpi)S(t) + \upsilon V(t) + \vartheta R(t), \\
		\frac{dV(t)}{dt} &= \varpi S(t) - \sigma_0  \iota I(t)e^{-mI(t-\tau)}V(t) - ( d_0 + \upsilon)V(t), \\
		\frac{dI(t)}{dt} &= \iota I(t)e^{-mI(t-\tau)}\big(S(t) + \sigma_0 V(t)\big) - ( d_0 + \gamma_0  + d)I(t), \\
		\frac{dR(t)}{dt} &= \frac{\gamma_0 I(t)}{1 + \alpha I(t)} - ( d_0 + \vartheta)R(t).
	\end{aligned}
	\right.
\end{equation}Here, $S(t)$, $V(t)$, $I(t)$, and $R(t)$ denote the numbers of susceptible, vaccinated, infected, and recovered individuals at time $t$, respectively. All parameters are positive, where $\Lambda $ denotes the population inflow rate, $\vartheta$ denotes the rate of immunity waning in recovered individuals, $ \iota$ represents the disease transmission rate, $\varpi$ is the vaccination rate, $m$ measures the media influence intensity (with larger $m$ indicating stronger media-induced reduction in infection rate), $\sigma_0\in[0,1]$ denotes the relative susceptibility of vaccinated individuals, $\upsilon$ denotes the rate of vaccine-induced immunity waning, $ d_0$ is the natural mortality rate, $\gamma_0 $ is the recovery rate, $\alpha$ represents the medical resource saturation coefficient, $d$ is the disease-induced death rate, and $\tau$ denotes the time delay in media influence.

Let $C=C([-\tau,0],\mathbb R^4)$ be the Banach space of continuous functions \(\psi=(\psi_1,\psi_2,\psi_3,\psi_4)\) equipped with the norm \[\|\psi\|=\sup_{-\tau\le \theta\le0}\max_{1\le j\le4}|\psi_j(\theta)|.\] Define \[C_+=\{\psi\in C:\psi_j(\theta)\ge0,\ -\tau\le \theta\le0,\ j=1,\dots,4\}.\] The system \eqref{1.1} is subject to the initial conditions: \begin{equation}\label{1.2}	S(\theta) = \psi_1(\theta),\quad V(\theta) = \psi_2(\theta),\quad I(\theta) = \psi_3(\theta),\quad R(\theta) = \psi_4(\theta),\quad -\tau \leq \theta \leq 0,\end{equation}
where $\psi \in C_+$.

The rest of this paper is structured as follows. Section 2 is devoted to the analysis of the well-posedness and boundedness of solutions, together with the investigation of the existence of the disease-free and endemic equilibria. The local stability analysis of these equilibria is presented in Section 3. Section 4 is devoted to the investigation of global stability. In Section 5, we study the occurrence of local Hopf bifurcations induced by the time delay. The global continuation of bifurcating periodic solutions is analyzed in Section 6. Numerical simulations are provided in Section 7 to support the theoretical results.  Finally, Section 8 provides concluding remarks.
\section{Preliminaries}
\subsection{Positivity}

\begin{thm}
	Suppose that $S(0)$, $V(0)$, $I(0)$ and $R(0)$  are all positive.  Then the solution generated by system \eqref{1.1} remains positive  for all \(t\ge0\).
\end{thm}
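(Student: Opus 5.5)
We prove positivity on the maximal interval of existence $[0,t_{\max})$. Local existence and uniqueness of a solution with initial data \eqref{1.2} follow from standard theory of retarded functional differential equations, since the right-hand side of \eqref{1.1} is locally Lipschitz on $C_+$. We then argue in two stages. First, the $I$-equation is linear in $I(t)$ once the other state variables are regarded as known functions of $t$. Second, a first-exit-time contradiction argument handles $S$, $V$ and $R$.

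\emph{Step 1 (the infected class).} Writing the third equation of \eqref{1.1} as $I'(t)=I(t)\,g(t)$, with
\[
g(t)=\iota e^{-mI(t-\tau)}\big(S(t)+\sigma_0V(t)\big)-(d_0+\gamma_0+d),
\]
we obtain
\[
I(t)=I(0)\exp\Big(\int_0^t g(s)\,ds\Big)>0
\]
for all $t\in[0,t_{\max})$, since $g$ is continuous. This holds regardless of the signs of $S$ and $V$, so $I$ is positive without further assumptions.

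\emph{Step 2 (exit-time argument).} Suppose that $S$, $V$ or $R$ fails to stay positive. Let $t_1>0$ be the first time at which one of them vanishes, so that all three are positive on $[0,t_1)$ and one of them equals $0$ at $t_1$. We treat each possibility.

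\begin{itemize}
\item If $R(t_1)=0$, then on $[0,t_1]$ we have $R'\ge -(d_0+\vartheta)R$, because $\gamma_0I/(1+\alpha I)>0$. Hence $R(t_1)\ge R(0)e^{-(d_0+\vartheta)t_1}>0$, a contradiction.
\item If $V(t_1)=0$, then on $[0,t_1]$ we have $V'\ge -\big(\sigma_0\iota I e^{-mI(t-\tau)}+d_0+\upsilon\big)V$, since $\varpi S\ge0$. Integrating gives $V(t_1)\ge V(0)\exp(-\int_0^{t_1}(\cdots))>0$, a contradiction.
\item If $S(t_1)=0$, the same integrating-factor argument applies, using $\upsilon V+\vartheta R\ge0$ and $\Lambda>0$ on $[0,t_1]$.
\end{itemize}

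Thus no such $t_1$ exists, and all components are positive on $[0,t_{\max})$.

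\emph{Step 3 (global existence).} It remains to show that $t_{\max}=\infty$. Put $N=S+V+I+R$. Using positivity and $\gamma_0I/(1+\alpha I)\le\gamma_0 I$, we get $N'\le\Lambda-d_0N$. Hence $N$ is bounded on $[0,t_{\max})$, so the solution cannot blow up and exists for all $t\ge0$.

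The main obstacle is the delayed nonlinearity $e^{-mI(t-\tau)}$. On $[0,\tau]$ its argument involves the history $\psi_3$, which is only assumed nonnegative, so the exponent could be large. It is nevertheless continuous and finite, so the integrating-factor estimates go through. Care is needed that every lower bound we use involves only a continuous coefficient multiplying the variable in question, and never requires a sign on $I(t-\tau)$.
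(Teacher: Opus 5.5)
Your proof is correct and follows essentially the same route as the paper. The paper takes the first time $t_1^*$ at which $B(t)=\min\{S,V,I,R\}$ vanishes and derives, for each component, an exponential lower bound of the form $X(t_1^*)\ge X(0)e^{-b\,t_1^*}>0$; your explicit formula $I(t)=I(0)\exp\big(\int_0^t g(s)\,ds\big)$ and your local/global existence step are small refinements that the paper leaves implicit. One minor remark: since $\psi_3\ge 0$, and $I>0$ for $t\ge 0$ by your Step 1, the factor $e^{-mI(t-\tau)}$ always lies in $(0,1]$, so the ``obstacle'' you describe in your last paragraph does not actually arise.
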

\begin{proof}
	Let $B(t)=\min \{S(t), V(t), I(t), R(t)\} $
	for all \(t \ge 0\).
	Since \(B(0) > 0\), it suffices to show that
	\(B(t) > 0\) for every \(t \ge 0\).
	
	Suppose, to the contrary, that there exists a time \(t_1^{*} > 0\) such that
	\(B(t) > 0\) for \(0 \le t < t_1^{*}\), whereas \(B(t_1^{*}) = 0\). At least one component vanishes at $t=t_1^*$. We consider the following possible cases.
	
	(1) If \( B({t_1}^*) = S({t_1}^*) \), then  
	\[
	\begin{aligned}
		\frac{dS(t)}{dt} &= \Lambda  - \iota I(t)e^{-mI(t-\tau)}S(t) - ( d_0 +\varpi)S(t) + \upsilon V(t) + \vartheta R(t) \\
		&\ge - \iota I(t)e^{-mI(t-\tau)}S(t) - ( d_0 + \varpi)S(t) \\
		&\ge -\max_{0\leq t\leq {t_1}^*}\left\{\iota I(t) e^{-mI(t-\tau)}\right\}S(t) - ( d_0 + \varpi)S(t) \\
		&= -b_1 S(t),
	\end{aligned}
	\]  
	for \( t \in [0, {t_1}^*] \), where \( b_1 = \max_{0\leq t\leq {t_1}^*}\left\{\iota I(t) e^{-mI(t-\tau)}\right\} + ( d_0 + \varpi) \). Therefore, \( S({t_1}^*) \ge S(0)e^{-b_1 {t_1}^*} > 0 \), which contradicts \( S({t_1}^*) = 0 \).  
	
	(2) If \( B({t_1}^*) = V({t_1}^*) \), then  
	\[
	\begin{aligned}
		\frac{dV(t)}{dt} &= \varpi S(t) - \sigma_0 \iota I(t)e^{-mI(t-\tau)}V(t) - ( d_0 + \upsilon)V(t) \\
		&\ge - \sigma_0 \iota I(t)e^{-mI(t-\tau)}V(t) - ( d_0 + \upsilon)V(t) \\
		&\ge -\max_{0\leq t\leq {t_1}^*}\left\{\sigma_0 \iota I(t)e^{-mI(t-\tau)}\right\}V(t) - ( d_0 + \upsilon)V(t) \\
		&= -b_2 V(t),
	\end{aligned}
	\]  
	for \( t \in [0, {t_1}^*] \), where \( b_2 = \max_{0\leq t\leq {t_1}^*}\left\{\sigma_0\iota I(t) e^{-mI(t-\tau)}\right\} + ( d_0 + \upsilon) \). Therefore, \( V({t_1}^*) \ge V(0)e^{-b_2 {t_1}^*} > 0 \), which contradicts \( V({t_1}^*) = 0 \).  
	
	(3) If \( B({t_1}^*) = I({t_1}^*) \), then  
	\[
	\begin{aligned}
		\frac{dI(t)}{dt} &= \iota I(t)e^{-mI(t-\tau)}\big(S(t) + \sigma_0 V(t)\big) - ( d_0 + \gamma_0  + d)I(t) \\
		&\ge - ( d_0 + \gamma_0  + d)I(t) \\
		&= -b_3 I(t),
	\end{aligned}
	\]  
	for \( t \in [0, {t_1}^*] \), where \( b_3 =  d_0 + \gamma_0  + d \). An application of the comparison theorem yields \( I({t_1}^*) \ge I(0)e^{-b_3 {t_1}^*} > 0 \), which contradicts \( I({t_1}^*) = 0 \).  
	
	(4) If \( B({t_1}^*) = R({t_1}^*) \), then  
	\[
	\begin{aligned}
		\frac{dR(t)}{dt} &= \frac{\gamma_0  I(t)}{1 + \alpha I(t)} - ( d_0 + \vartheta)R(t) \\
		&\ge - ( d_0 + \vartheta)R(t) \\
		&= -b_4 R(t),
	\end{aligned}
	\]  
	for \( t \in [0, {t_1}^*] \), where \( b_4 =  d_0 + \vartheta \). Therefore, \( R({t_1}^*) \ge R(0)e^{-b_4 {t_1}^*} > 0 \), which contradicts \( R({t_1}^*) = 0 \).  
	
	In summary, the proof is complete.
\end{proof}
\subsection{Boundedness}
\begin{thm}
	Suppose that $S(0)$, $V(0)$, $I(0)$ and $R(0)$  are all positive. Then every solution of system \eqref{1.1} remains bounded for all \(t\geq0\).
\end{thm}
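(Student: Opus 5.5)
Our plan is to bound the total population $N(t)=S(t)+V(t)+I(t)+R(t)$ and then use positivity (the preceding theorem) to deduce that each component is bounded. Adding the four equations of \eqref{1.1}, the incidence terms $\iota I(t)e^{-mI(t-\tau)}S(t)$ and $\sigma_0\iota I(t)e^{-mI(t-\tau)}V(t)$ cancel, as do the vaccination terms $\varpi S$, the waning terms $\upsilon V$ and $\vartheta R$. The only term that does not cancel cleanly is the recovery flux: $I$ loses $\gamma_0 I$ while $R$ gains only $\gamma_0 I/(1+\alpha I)$. We therefore obtain
\[
\frac{dN(t)}{dt}=\Lambda-d_0N(t)-dI(t)-\gamma_0 I(t)+\frac{\gamma_0 I(t)}{1+\alpha I(t)} .
\]

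The key observation is that $\gamma_0 I/(1+\alpha I)\le \gamma_0 I$ for $I\ge 0$, so the last two terms together are nonpositive. Positivity of $I$ (from the previous theorem) also gives $-dI\le 0$. Hence
\[
\frac{dN(t)}{dt}\le \Lambda-d_0N(t),\qquad t\ge 0 .
\]
By the comparison principle for scalar linear ODEs this yields
\[
N(t)\le \frac{\Lambda}{d_0}+\Big(N(0)-\frac{\Lambda}{d_0}\Big)e^{-d_0t}\le \max\Big\{N(0),\frac{\Lambda}{d_0}\Big\}.
\]
Moreover $\limsup_{t\to\infty}N(t)\le \Lambda/d_0$. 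Since every component is positive, each of $S,V,I,R$ is bounded above by this same constant for all $t\ge0$, which proves the statement. We would also note that the set $\Omega=\{(S,V,I,R)\in\mathbb R^4_+: S+V+I+R\le \Lambda/d_0\}$ is positively invariant and attracting.

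One technical point needs care: the comparison argument requires the solution to exist on $[0,\infty)$. We would handle this by arguing on the maximal interval of existence $[0,T_{\max})$. There the a priori bound above holds, so the solution cannot blow up, and the standard continuation theorem for retarded functional differential equations gives $T_{\max}=\infty$. The delay enters only through the factor $e^{-mI(t-\tau)}\in(0,1]$ and drops out of the summed equation, so it poses no difficulty. The main (mild) obstacle is thus just the bookkeeping of the non-cancelling saturated recovery term, which is resolved by the inequality above.
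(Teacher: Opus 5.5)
Your proposal is correct and follows essentially the same route as the paper: you sum the equations, use $\gamma_0 I/(1+\alpha I)\le\gamma_0 I$ to obtain $\frac{dN}{dt}\le\Lambda-d_0N$, bound $N(t)$ by $\max\{N(0),\Lambda/d_0\}$ via comparison, and then bound each component using positivity. Your remark about extending the solution from the maximal interval of existence to $[0,\infty)$ is a small piece of extra rigor that the paper leaves implicit.
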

\begin{proof}
	Let $M(t)=S(t)+V(t)+I(t)+R(t)$. Differentiating \(M(t)\) along the solutions of system \eqref{1.1} gives
	
	\[
	\begin{aligned}
		\frac{dM(t)}{dt} 
		&= \frac{dS(t)}{dt} + \frac{dV(t)}{dt} + \frac{dI(t)}{dt} + \frac{dR(t)}{dt} \\[4pt]
		&= \Lambda  - \iota I(t)e^{-mI(t-\tau)}S(t) - ( d_0 + \varpi)S(t) + \upsilon V(t) + \vartheta R(t)\\
		&\quad +\varpi S(t) - \sigma_0 \iota I(t)e^{-mI(t-\tau)}V(t) - ( d_0 + \upsilon)V(t)  \\
		&\quad +\iota I(t)e^{-mI(t-\tau)}\big(S(t) + \sigma_0 V(t)\big) - ( d_0 + \gamma_0  + d)I(t) \\
		&\quad + \frac{\gamma_0  I(t)}{1 + \alpha I(t)} - ( d_0 + \vartheta)R(t)\\[4pt]
		&= \Lambda  -  d_0 M(t) - d I(t) + \gamma_0 \left[ \frac{I(t)}{1+\alpha I(t)} - I(t) \right].
	\end{aligned}
	\]
	
	Observing that  
	
	\[
	\frac{\gamma_0  I}{1+\alpha I} - \gamma_0  I = -\frac{\gamma_0  \alpha I^2}{1+\alpha I} \le 0,
	\] 
	we obtain the differential inequality  
	
	\[
	\frac{dM(t)}{dt} \le \Lambda  -  d_0 M(t).
	\]
	
By the comparison principle, \[ M(t) \leq M(0)e^{-d_0t} +\frac{\Lambda}{d_0} \left(1-e^{-d_0t}\right), \qquad t\geq0. \] Consequently, \[ M(t) \leq \max\left\{ M(0),\frac{\Lambda}{d_0} \right\}, \qquad t\geq0, \] and \[ \limsup_{t\to\infty}M(t) \leq \frac{\Lambda}{d_0}. \] 

Moreover, the region \[ \Omega = \left\{ (S,V,I,R)\in\mathbb R_+^4: S+V+I+R\leq\frac{\Lambda}{d_0} \right\} \] is positively invariant. Indeed, if \(M(0)\leq\Lambda/d_0\), then \[ M(t)\leq M(0)e^{-d_0t} +\frac{\Lambda}{d_0}\left(1-e^{-d_0t}\right) \leq\frac{\Lambda}{d_0} \] for all \(t\geq0\).

Since \(S(t)\), \(V(t)\), \(I(t)\), and \(R(t)\) are nonnegative and each of them is bounded above by \(M(t)\), all components of the solution are bounded on \([0,\infty)\). This completes the proof.

\end{proof}
\subsection{Existence of Equilibrium Points}
The existence of an equilibrium for system \eqref{1.1} can be demonstrated by applying the following theorems.
\begin{thm}
	System \eqref{1.1} admits a unique disease-free equilibrium given by $E^0=(S^0,V^0,0,0)$, where
	\[
	S^0=\frac{\Lambda ( d_0+\upsilon)}{ d_0( d_0+\varpi+\upsilon)},\qquad
	V^0=\frac{\varpi\Lambda }{ d_0( d_0+\varpi+\upsilon)}.
	\]
\end{thm}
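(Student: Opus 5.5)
To prove the statement, I will set the right-hand sides of system \eqref{1.1} to zero and impose $I=0$, which is what "disease-free" means. Then I will solve the resulting linear algebraic system and check that its solution is unique and lies in $\mathbb R_+^4$.

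First, with $I=0$ every incidence term $\iota I e^{-mI}S$ and $\sigma_0\iota I e^{-mI}V$ vanishes. The delay plays no role at an equilibrium, since $I(t-\tau)=I(t)=0$. The $I$-equation is then satisfied automatically. The $R$-equation reduces to
\[
-(d_0+\vartheta)R=0,
\]
and since $d_0+\vartheta>0$ this forces $R=0$.

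Next, the $S$- and $V$-equations become the linear system
\[
\Lambda-(d_0+\varpi)S+\upsilon V=0,\qquad \varpi S-(d_0+\upsilon)V=0.
\]
From the second equation, $V=\varpi S/(d_0+\upsilon)$. Substituting into the first gives
\[
\Lambda=S\,\frac{(d_0+\varpi)(d_0+\upsilon)-\varpi\upsilon}{d_0+\upsilon}=S\,\frac{d_0(d_0+\varpi+\upsilon)}{d_0+\upsilon},
\]
which yields exactly the stated $S^0$, and then $V^0=\varpi\Lambda/\bigl(d_0(d_0+\varpi+\upsilon)\bigr)$.

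Uniqueness follows because the coefficient determinant is $d_0(d_0+\varpi+\upsilon)>0$, so the linear system has exactly one solution. Both components are positive because all parameters are positive. As a consistency check, $S^0+V^0=\Lambda/d_0$, so $E^0$ lies in the invariant region $\Omega$ from the boundedness theorem.

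There is no real obstacle here. The only point needing care is the algebraic simplification $(d_0+\varpi)(d_0+\upsilon)-\varpi\upsilon=d_0(d_0+\varpi+\upsilon)$.
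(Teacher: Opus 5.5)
Your proposal is correct and follows the paper's proof essentially step for step: set $I=0$, use the fourth equation to get $R=0$, and solve the resulting $2\times 2$ linear system for $S^0$ and $V^0$. Your explicit check that the determinant $d_0(d_0+\varpi+\upsilon)$ is positive is a small improvement, since it makes the uniqueness claim rigorous where the paper leaves it implicit.
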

\begin{proof}

At an equilibrium of system \eqref{1.1}, all time derivatives vanish. Moreover, since an equilibrium is time-independent, we have \(I(t-\tau)=I\). Therefore, an equilibrium \((S,V,I,R)\) satisfies \begin{equation}\label{2.1} 
\begin{cases} \Lambda-\iota e^{-mI}IS-(d_0+\varpi)S+\upsilon V+\vartheta R=0,\\ \varpi S-\sigma_0\iota e^{-mI}IV-(d_0+\upsilon)V=0,\\ \iota e^{-mI}I(S+\sigma_0V)-(d_0+\gamma_0+d)I=0,\\ \dfrac{\gamma_0I}{1+\alpha I}-(d_0+\vartheta)R=0.
 \end{cases}
 \end{equation} For a disease-free equilibrium, we set \(I=0\). The fourth equation of \eqref{2.1} then gives \(R=0\). Consequently, the first two equations reduce to \[ \Lambda-(d_0+\varpi)S+\upsilon V=0, \qquad \varpi S-(d_0+\upsilon)V=0. \] Solving this linear system yields \[ S^0= \frac{\Lambda(d_0+\upsilon)} {d_0(d_0+\varpi+\upsilon)}, \qquad V^0= \frac{\varpi\Lambda} {d_0(d_0+\varpi+\upsilon)}. \] Hence, system \eqref{1.1} admits the unique disease-free equilibrium \[ E^0= \left( \frac{\Lambda(d_0+\upsilon)} {d_0(d_0+\varpi+\upsilon)}, \frac{\varpi\Lambda} {d_0(d_0+\varpi+\upsilon)}, 0,0 \right). \]	
		
\end{proof}

\begin{thm}
	Assume that $\mathcal{R}_0>1$ and
	\[
	m \geq \frac{\sigma_0 \iota \varpi\left(1-\sigma_0\right)}{\left(d_0+\upsilon+\varpi\right)\left(d_0+\upsilon+\sigma_0 \varpi\right)}.
	\]
	Then system \eqref{1.1} admits a unique endemic equilibrium
	\[
	E^*=\left(S^*, V^*, I^*, R^*\right),
	\]
	where
	\[
	\mathcal{R}_0=\frac{\Lambda \iota\left(d_0+\upsilon+\sigma_0 \varpi\right)}{d_0\left(d_0+\varpi+\upsilon\right)\left(d_0+\gamma_0+d\right)}.\]
\end{thm}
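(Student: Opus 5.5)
The plan is to reduce the equilibrium system \eqref{2.1} with $I>0$ to a single scalar equation in $I$. We then use the intermediate value theorem to get existence and a monotonicity argument to get uniqueness; the hypothesis on $m$ is exactly what the monotonicity needs.

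\textbf{Step 1: reduction to one equation.} Write $k=d_0+\gamma_0+d$, $\beta=\beta(I)=\iota e^{-mI}$, $a=d_0+\upsilon+\varpi$ and $b=d_0+\upsilon+\sigma_0\varpi$, so that $a\ge b>0$ because $\sigma_0\le1$.
\begin{itemize}
\item The fourth equation gives $R=\gamma_0 I/\big((d_0+\vartheta)(1+\alpha I)\big)$.
\item The second equation gives $V=\varpi S/(\sigma_0\beta I+d_0+\upsilon)$.
\item Dividing the third equation by $I>0$ gives $\beta(S+\sigma_0V)=k$.
\end{itemize}
Combining the last two yields
\[
S=\frac{k(\sigma_0\beta I+d_0+\upsilon)}{\beta(\sigma_0\beta I+b)},\qquad V=\frac{k\varpi}{\beta(\sigma_0\beta I+b)},
\]
and both are automatically positive. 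Adding the first three equations gives $\Lambda-d_0(S+V)-kI+\vartheta R=0$. Substituting $S+V=k(u+a)/\big(\beta(u+b)\big)$ with $u=\sigma_0\iota Ie^{-mI}$, the equilibria with $I>0$ correspond exactly to the positive zeros of
\[
F(I)=\frac{d_0k}{\iota}\,e^{mI}\,\frac{u(I)+a}{u(I)+b}-\Lambda+kI-\frac{\vartheta\gamma_0 I}{(d_0+\vartheta)(1+\alpha I)} .
\]

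\textbf{Step 2: existence.} At $I=0$ we have $F(0)=\dfrac{d_0ka}{\iota b}-\Lambda$, and this is negative precisely when $\mathcal R_0>1$. As $I\to\infty$ the first term stays positive. Moreover the remaining terms satisfy
\[
kI-\frac{\vartheta\gamma_0 I}{(d_0+\vartheta)(1+\alpha I)}\ge (k-\gamma_0)I\to\infty,
\]
so $F(I)\to\infty$. By the intermediate value theorem $F$ has a positive zero $I^*$. We then define $S^*,V^*,R^*$ by the formulas of Step 1; they are positive.

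\textbf{Step 3: uniqueness (the main obstacle).} We show that $F$ is strictly increasing on $(0,\infty)$.
\begin{itemize}
\item The linear-minus-saturated part has derivative at least $k-\vartheta\gamma_0/(d_0+\vartheta)>k-\gamma_0>0$.
\item For the first term, differentiate $g(I)=e^{mI}(u+a)/(u+b)$, using $u'(I)=\sigma_0\iota e^{-mI}(1-mI)$:
\[
g'(I)=\frac{e^{mI}}{(u+b)^2}\Big[m(u+a)(u+b)-(a-b)\sigma_0\iota e^{-mI}(1-mI)\Big].
\]
\end{itemize}
Since $u\ge0$, the first product is at least $mab$. Since $e^{-mI}(1-mI)\le1$ and $a-b=\varpi(1-\sigma_0)\ge0$, the subtracted term is at most $\sigma_0\iota\varpi(1-\sigma_0)$. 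Hence $g'\ge0$ whenever $m\ge\sigma_0\iota\varpi(1-\sigma_0)/(ab)$, which is exactly the stated hypothesis.

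Therefore $F'>0$, so the zero $I^*$ is unique. This gives a unique endemic equilibrium $E^*$.
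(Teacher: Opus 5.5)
Your proof is correct and follows the paper's route. Your $F$ is $d_0$ times the paper's difference $G-F$, since $u+a=D+\varpi$ and $u+b=D+\sigma_0\varpi$; the sign at $I=0$ (equivalent to $\mathcal R_0>1$), the limit at infinity, and the monotonicity bound under the hypothesis on $m$ are the same ingredients, and your single estimate $e^{-mI}(1-mI)\le 1$ replaces the paper's case split at $I=1/m$, needing only $g'\ge 0$ because strictness comes from the linear part.
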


\begin{proof}
	Let
	\[
	E^*=\left(S^*, V^*, I^*, R^*\right)
	\]
	be an endemic equilibrium of system \eqref{1.1}. Since $I^*>0$, the equilibrium components satisfy
	\begin{equation}\label{2.2}
		\left\{\begin{array}{l}
			\Lambda-\iota I^* e^{-m I^*} S^*-\left(d_0+\varpi\right) S^*+\upsilon V^*+\vartheta R^*=0, \\
			\varpi S^*-\sigma_0 \iota I^* e^{-m I^*} V^*-\left(d_0+\upsilon\right) V^*=0, \\
			\iota I^* e^{-m I^*}\left(S^*+\sigma_0 V^*\right)-\left(d_0+\gamma_0+d\right) I^*=0, \\
			\frac{\gamma_0 I^*}{1+\alpha I^*}-\left(d_0+\vartheta\right) R^*=0 .
		\end{array}\right.
	\end{equation}
	
	For convenience, denote
	\[
	K=d_0+\gamma_0+d, \quad a=d_0+\upsilon.
	\]
	Summing all four equations in \eqref{2.2}, internal transition terms cancel, and we obtain
	\[
	\Lambda-d_0\left(S^*+V^*+R^*\right)-K I^*+\frac{\gamma_0 I^*}{1+\alpha I^*}=0 .
	\]
	From the fourth equation of \eqref{2.2},
	\[
	R^*=\frac{\gamma_0 I^*}{\left(d_0+\vartheta\right)\left(1+\alpha I^*\right)}.
	\]
	Substituting this expression into the above identity yields
	\[
	S^*+V^*=F\left(I^*\right),
	\]
	where
	\begin{equation}\label{2.3}
		F(I)=\frac{\Lambda}{d_0}-\frac{K}{d_0} I+\frac{\gamma_0 \vartheta I}{d_0\left(d_0+\vartheta\right)(1+\alpha I)} .
	\end{equation}
	
	We now derive a second expression for $S^*+V^*$. Since $I^*>0$, dividing the third equation of \eqref{2.2} by $I^*$ gives
	\begin{equation}\label{2.4}
		S^*+\sigma_0 V^*=\frac{K}{\iota} e^{m I^*}.
	\end{equation}
	From the second equilibrium equation, we solve for $V^*$:
	\[
	V^*=\frac{\varpi S^*}{a+\sigma_0 \iota I^* e^{-m I^*}}.
	\]
	Define
	\[
	D(I)=a+\sigma_0 \iota I e^{-m I}.
	\]
	Then
	\[
	V^*=\frac{\varpi}{D\left(I^*\right)} S^*.
	\]
	Substituting into \eqref{2.4}, we obtain
	\[
	S^*=\frac{K e^{m I^*}}{\iota} \cdot \frac{D\left(I^*\right)}{D\left(I^*\right)+\sigma_0 \varpi},
	\]
	and hence
	\[
	V^*=\frac{K e^{m I^*}}{\iota} \cdot \frac{\varpi}{D\left(I^*\right)+\sigma_0 \varpi}.
	\]
	Therefore,
	\[
	S^*+V^*=G\left(I^*\right),
	\]
	where
	\begin{equation}\label{2.5}
		G(I)=\frac{K e^{m I}}{\iota} \cdot \frac{D(I)+\varpi}{D(I)+\sigma_0 \varpi}.
	\end{equation}
	Consequently, the infected component $I^*$ of any endemic equilibrium must satisfy
	\begin{equation} \label{2.6}
		F\left(I^*\right)=G\left(I^*\right).
	\end{equation}
	
	We next analyze the monotonicity of $F$ and $G$. Differentiating \eqref{2.3},
	\[
	F^{\prime}(I)=-\frac{K}{d_0}+\frac{\gamma_0 \vartheta}{d_0\left(d_0+\vartheta\right)(1+\alpha I)^2}.
	\]
	Since
	\[
	\frac{\vartheta}{\left(d_0+\vartheta\right)(1+\alpha I)^2}<1,
	\]
	we have
	\[
	F^{\prime}(I)<-\frac{K}{d_0}+\frac{\gamma_0}{d_0}=-\frac{d_0+d}{d_0}<0 .
	\]
	Hence, $F(I)$ is strictly decreasing on $[0,+\infty)$.
	
	Now consider $G(I)$. Direct computation yields
	\[
	D^{\prime}(I)=\sigma_0 \iota e^{-m I}(1-m I).
	\]
	Taking the logarithmic derivative of $G(I)$, we obtain
	\begin{equation} \label{2.7}
		\frac{G'(I)}{G(I)}
		=m+\frac{\varpi(\sigma_0-1) D'(I)}{\big(D(I)+\varpi\big)\big(D(I)+\sigma_0 \varpi\big)}.
	\end{equation}
	
	For $0 \leq I \leq 1/m$, one has
	$$
	0 \leq D^{\prime}(I) \leq \sigma_0 \iota,
	\qquad
	D(I) \geq a.
	$$
	Therefore,
	$$
	\frac{\varpi\left(1-\sigma_0\right) D^{\prime}(I)}{(D(I)+\varpi)\left(D(I)+\sigma_0 \varpi\right)} \leq \frac{\sigma_0 \iota \varpi\left(1-\sigma_0\right)}{(a+\varpi)\left(a+\sigma_0 \varpi\right)}.
	$$
	Combined with the condition imposed on $m$, it follows from \eqref{2.7} that
	$$
	G^{\prime}(I) \ge 0,\quad 0 \leq I \leq \frac{1}{m}.
	$$
	
	For $I \geq 1/m$, we have $D^{\prime}(I) \leq 0$. Recall $\sigma_0\in[0,1]$ from the model description, so $\sigma_0-1\le0$. We deduce
	\[
	\frac{\varpi(\sigma_0-1) D'(I)}{\big(D(I)+\varpi\big)\big(D(I)+\sigma_0\varpi\big)}\ge 0,
	\]
	which implies
	\[
	\frac{G'(I)}{G(I)}\ge m>0.
	\]
	Thus $G'(I)>0$.
	
	Combining the two cases above, $G'(I)>0$ holds for all $I\ge0$. Therefore, $G(I)$ is strictly increasing on $[0,+\infty)$.
	
	We now prove that equation \eqref{2.6} possesses exactly one positive solution. At $I=0$,
	\[
	F(0)=\frac{\Lambda}{d_0},
	\qquad
	G(0)=\frac{K}{\iota} \cdot \frac{a+\varpi}{a+\sigma_0 \varpi}.
	\]
	By definition,
	\[
	\mathcal{R}_0=\frac{\Lambda \iota\left(a+\sigma_0 \varpi\right)}{d_0(a+\varpi) K}=\frac{F(0)}{G(0)}.
	\]
	Hence $\mathcal{R}_0>1$ implies $F(0)>G(0)$.
	
	Furthermore,
	\[
	\lim _{I \rightarrow+\infty} F(I)=-\infty,
	\qquad
	\lim _{I \rightarrow+\infty} G(I)=+\infty.
	\]
	Since $F$ and $G$ are continuous on $[0,\infty)$, there exists at least one $I^*>0$ satisfying \eqref{2.6}. Moreover, $F$ is strictly decreasing and $G$ is strictly increasing, so such a positive solution $I^*$ is unique.
	
	Once $I^*$ is uniquely determined, the remaining equilibrium components are uniquely given by
	$$
	\begin{aligned}
		S^* & =\frac{K e^{m I^*}}{\iota} \cdot \frac{D\left(I^*\right)}{D\left(I^*\right)+\sigma_0 \varpi}, \\
		V^* & =\frac{K e^{m I^*}}{\iota} \cdot \frac{\varpi}{D\left(I^*\right)+\sigma_0 \varpi},
	\end{aligned}
	$$
	and
	$$
	R^*=\frac{\gamma_0 I^*}{\left(d_0+\vartheta\right)\left(1+\alpha I^*\right)}.
	$$
	Therefore, system \eqref{1.1} admits a unique endemic equilibrium
	$$
	E^*=\left(S^*, V^*, I^*, R^*\right).
	$$
	The proof is complete.
\end{proof}

\section{Local Asymptotic Stability}
\subsection{Local Asymptotic Stability of $E^0$}
We next investigate the local asymptotic stability of the disease-free equilibrium
$E^0$.

\begin{thm}
	The disease-free equilibrium $E^0$ is locally asymptotically stable
	for $\mathcal{R}_0<1$ and unstable for $\mathcal{R}_0>1$.
\end{thm}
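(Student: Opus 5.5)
We linearize system \eqref{1.1} at $E^0=(S^0,V^0,0,0)$. The delay enters only through the factor $e^{-mI(t-\tau)}$. This factor always multiplies $I(t)$, and $I=0$ at $E^0$, so its derivative with respect to $I(t-\tau)$ contributes the term $-m\iota I\,e^{-mI}S$. That term vanishes at $I=0$. Hence the linearization at $E^0$ is delay-free. Writing $x=S-S^0$, $y=V-V^0$, $z=I$, $w=R$, the linear system is
\[
\begin{aligned}
\dot x&=-(d_0+\varpi)x+\upsilon y-\iota S^0 z+\vartheta w,\\
\dot y&=\varpi x-(d_0+\upsilon)y-\sigma_0\iota V^0 z,\\
\dot z&=\big(\iota(S^0+\sigma_0V^0)-K\big)z,\\
\dot w&=\gamma_0 z-(d_0+\vartheta)w,
\end{aligned}
\]
with $K=d_0+\gamma_0+d$ and $a=d_0+\upsilon$ as in the proof of the previous theorem.

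The $z$-equation is decoupled. Expanding the characteristic determinant along the $z$ column/row therefore factors it as
\[
\big(\lambda-\lambda_3\big)\big(\lambda+d_0+\vartheta\big)\Big[\lambda^2+(2d_0+\varpi+\upsilon)\lambda+d_0(d_0+\varpi+\upsilon)\Big]=0,
\]
where $\lambda_3=\iota(S^0+\sigma_0V^0)-K$.

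The quadratic factor comes from the $(x,y)$ block, with trace $-(2d_0+\varpi+\upsilon)<0$ and determinant $(d_0+\varpi)(d_0+\upsilon)-\upsilon\varpi=d_0(d_0+\varpi+\upsilon)>0$. By Routh--Hurwitz both of its roots have negative real parts. The factor $\lambda+d_0+\vartheta$ gives the root $-(d_0+\vartheta)<0$.

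The key step is to identify the sign of $\lambda_3$ with that of $\mathcal R_0-1$. Using the explicit $S^0,V^0$ from the disease-free equilibrium theorem,
\[
\iota(S^0+\sigma_0V^0)=\frac{\Lambda\iota(d_0+\upsilon+\sigma_0\varpi)}{d_0(d_0+\varpi+\upsilon)}=K\mathcal R_0,
\]
so $\lambda_3=K(\mathcal R_0-1)$.

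If $\mathcal R_0<1$, then all roots have negative real parts, and $E^0$ is locally asymptotically stable. No delay-dependent roots arise, so this holds for every $\tau\ge0$. If $\mathcal R_0>1$, then $\lambda_3>0$ is a positive real root, and $E^0$ is unstable.

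The only point needing care is justifying that the delayed term drops out of the linearization; the rest is a routine factorization. I expect no real obstacle.
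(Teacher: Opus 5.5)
Your proposal is correct and is essentially the paper's proof: linearize at $E^0$ (where the delayed term drops out), factor the characteristic polynomial, and identify the remaining eigenvalue as $\iota(S^0+\sigma_0V^0)-(d_0+\gamma_0+d)=(d_0+\gamma_0+d)(\mathcal R_0-1)$. There are two cosmetic differences: the paper factors your quadratic as $(\lambda+d_0)(\lambda+d_0+\varpi+\upsilon)$ and reads off the roots $-d_0$ and $-(d_0+\varpi+\upsilon)$ instead of invoking Routh--Hurwitz, and you explicitly justify why the delay disappears from the linearization, which the paper leaves implicit.
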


\begin{proof}
	By linearizing system (1.1) at the disease-free equilibrium $E^0 = (S^0, V^0, 0, 0)$, we derive the characteristic equation
	$$
	(\lambda+ d_0+\vartheta)(\lambda+ d_0)(\lambda+ d_0+\upsilon+\varpi)g_0(\lambda)=0,
	$$
	where
	$$
	g_0(\lambda)
	=
	\lambda+ d_0+\gamma_0 +d-\iota(S^0+\sigma_0 V^0).
	$$
	
	It is clear that 
	$\lambda_{01}=- d_0$, 
	$\lambda_{02}=- d_0-\vartheta$, 
	and 
	$\lambda_{03}=- d_0-\upsilon-\varpi$
	are all negative real eigenvalues. 
	
	The remaining eigenvalue is obtained from 
	$g_0(\lambda)=0$, which yields
	$$
	\lambda_{04}
	=
	\iota(S^0+\sigma_0 V^0)-( d_0+\gamma_0 +d).
	$$
Using the expression for $\mathcal{R}_0$, we obtain
	$$
	\lambda_{04}
	=
	( d_0+\gamma_0 +d)(\mathcal{R}_0-1).
	$$
		
Therefore, if $\mathcal R_0<1$, then $\lambda_{04}<0$, and all eigenvalues
have negative real parts. Hence, $E^0$ is locally asymptotically stable.

Conversely, if $\mathcal R_0>1$, then $\lambda_{04}>0$, and therefore
$E^0$ is unstable.	
	
\end{proof}
\subsection{Local Asymptotic Stability of $E^*$}
When \(\mathcal{R}_0 > 1\) and $m \geq \frac{\sigma_0 \iota \varpi\left(1-\sigma_0\right)}{\left(d_0+\upsilon+\varpi\right)\left(d_0+\upsilon+\sigma_0 \varpi\right)}$, a positive equilibrium $E^*$ exists for system \eqref{1.1} and fulfills the equilibrium equations in \eqref{2.2}. For convenience, let
\[
A=\iota e^{-mI^*}, \qquad
B=S^*+\sigma_0 V^*, \qquad
C=d_0+\gamma_0+d.
\]
Then one obtains the identity
\[
A\cdot B=C.
\]

Linearizing system \eqref{1.1} at the equilibrium point \(E^*\) leads to the following characteristic equation:
\[
\lambda^4+a_{11}\lambda^3+a_{12}\lambda^2+a_{13}\lambda+a_{14}
+(a_{21}\lambda^3+a_{22}\lambda^2+a_{23}\lambda+a_{24})e^{-\lambda\tau}=0.
\]
where
\begin{align*}
	a_{11}
	&=A I^*+d_0+\varpi+\sigma_0 A I^*+d_0+\upsilon+d_0+\vartheta,\\
	a_{12}
	&=\bigl(A I^*+d_0+\varpi\bigr)\bigl(\sigma_0 A I^*+d_0+\upsilon\bigr)
	+\sigma_0^2 A^2 V^* I^*
	-\upsilon\varpi
	+A^2 S^* I^*\\
	&\quad+\bigl(d_0+\vartheta\bigr)\bigl(A I^*+d_0+\varpi+\sigma_0 A I^*+d_0+\upsilon\bigr),\\
	a_{13}
	&=\bigl(d_0+\vartheta\bigr)\Big\{
	\bigl(A I^*+d_0+\varpi\bigr)\bigl(\sigma_0 A I^*+d_0+\upsilon\bigr)
	+\sigma_0^2 A^2 V^* I^*
	-\upsilon\varpi
	+A^2 S^* I^*
	\Big\}\\
	&\quad+\bigl(A I^*+d_0+\varpi\bigr)\sigma_0^2 A^2 V^* I^*
	+\upsilon\sigma_0 A^2 V^* I^*
	+\sigma_0\varpi A^2 S^* I^*
	+A^2 S^* I^*\bigl(\sigma_0 A I^*+d_0+\upsilon\bigr)\\
	&\quad-\frac{\gamma_0}{(1+\alpha I^*)^2}\vartheta A I^*,\\
	a_{14}
	&=\bigl(d_0+\vartheta\bigr)\Big\{
	\sigma_0^2 A^2 V^* I^*\bigl(A I^*+d_0+\varpi\bigr)
	+\upsilon\sigma_0 A^2 V^* I^*
	+\sigma_0\varpi A^2 S^* I^*
	+A^2 S^* I^*\bigl(\sigma_0 A I^*+d_0+\upsilon\bigr)
	\Big\}\\
	&\quad-\frac{\gamma_0}{(1+\alpha I^*)^2}\vartheta\Big\{
	\sigma_0\varpi A I^*+A I^*\bigl(\sigma_0 A I^*+d_0+\upsilon\bigr)
	\Big\},\\
	a_{21}
	&=m C I^*,\\
	a_{22}
	&=m C I^*\bigl(A I^*+d_0+\varpi+\sigma_0 A I^*+d_0+\upsilon+d_0+\vartheta\bigr)
	-m\sigma_0^2 A^2 V^* (I^*)^2
	-m A^2 S^* (I^*)^2,\\
	a_{23}
	&=m C I^*(d_0+\vartheta)\bigl(A I^*+d_0+\varpi+\sigma_0 A I^*+d_0+\upsilon\bigr)
	-(d_0+\vartheta)\bigl(m\sigma_0^2 A^2 V^* (I^*)^2+m A^2 S^* (I^*)^2\bigr)\\
	&\quad
	+m C I^*\bigl(A I^*+d_0+\varpi\bigr)\bigl(\sigma_0 A I^*+d_0+\upsilon\bigr)
	-m\sigma_0^2 A^2 V^* (I^*)^2\bigl(A I^*+d_0+\varpi\bigr)
	-m C I^*\upsilon\varpi\\
	&\quad-m\sigma_0 A^2 V^* (I^*)^2\upsilon
	-m\sigma_0\varpi A^2 S^* (I^*)^2
	-m A^2 S^* (I^*)^2\bigl(\sigma_0 A I^*+d_0+\upsilon\bigr),\\
	a_{24}
	&=(d_0+\vartheta)\Big\{
	m C I^*\bigl(A I^*+d_0+\varpi\bigr)\bigl(\sigma_0 A I^*+d_0+\upsilon\bigr)
	-m\sigma_0^2 A^2 V^* (I^*)^2\bigl(A I^*+d_0+\varpi\bigr)
	-m C I^*\upsilon\varpi\\
	&\quad-m\sigma_0 A^2 V^* (I^*)^2\upsilon
	-m\sigma_0\varpi A^2 S^* (I^*)^2
	-m A^2 S^* (I^*)^2\bigl(\sigma_0 A I^*+d_0+\upsilon\bigr)
	\Big\}.
\end{align*}

For $\tau=0$, the characteristic equation reduces to
\begin{equation}\label{3.1}
	\lambda^{4}
	+(a_{11}+a_{21})\lambda^{3}
	+(a_{12}+a_{22})\lambda^{2}
	+(a_{13}+a_{23})\lambda
	+(a_{14}+a_{24})=0.
\end{equation}

According to the Routh--Hurwitz stability criterion, all roots of the characteristic equation \eqref{3.1} have negative real parts provided that the following inequalities are satisfied:

\begin{equation}
	\begin{cases}
		a_{11}+a_{21}>0,\quad 
		a_{12}+a_{22}>0,\quad 
		a_{13}+a_{23}>0,\quad 
		a_{14}+a_{24}>0,\\
		
		(a_{11}+a_{21})(a_{12}+a_{22})>a_{13}+a_{23},\\
		
		\left(a_{11}+a_{21}\right)\left(a_{12}+a_{22}\right)\left(a_{13}+a_{23}\right)>\left(a_{13}+a_{23}\right)^2+\left(a_{11}+a_{21}\right)^2\left(a_{14}+a_{24}\right).
	\end{cases}
	\tag{H}
	\label{eq:H}
\end{equation}

Therefore, we obtain the following result.
\begin{thm}
	Assume that \(\mathcal R_0>1\), $m \geq \frac{\sigma_0 \iota \varpi\left(1-\sigma_0\right)}{\left(d_0+\upsilon+\varpi\right)\left(d_0+\upsilon+\sigma_0 \varpi\right)}$ and condition \eqref{eq:H} holds.
	Then, for \(\tau=0\), the endemic equilibrium
	\(E^*=(S^*,V^*,I^*,R^*)\) of system \eqref{1.1}
	is locally asymptotically stable.
\end{thm}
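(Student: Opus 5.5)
The argument is essentially an application of the Routh--Hurwitz criterion to the quartic \eqref{3.1}, so I will organize it in three steps.

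\emph{Step 1: existence of $E^*$.} Under the hypotheses $\mathcal R_0>1$ and the lower bound on $m$, the endemic equilibrium $E^*=(S^*,V^*,I^*,R^*)$ exists and is unique by the existence theorem of Section 2. This makes the coefficients $A,B,C$ and the identity $AB=C$ available.

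\emph{Step 2: the characteristic equation at $\tau=0$.} Linearizing \eqref{1.1} at $E^*$ gives the transcendental characteristic equation displayed above. The delay enters only through the factor $e^{-mI(t-\tau)}$, whose linearization contributes the terms multiplied by $e^{-\lambda\tau}$. Setting $\tau=0$ gives $e^{-\lambda\tau}=1$, so the equation collapses to the polynomial \eqref{3.1}, with coefficients $b_1=a_{11}+a_{21}$, $b_2=a_{12}+a_{22}$, $b_3=a_{13}+a_{23}$ and $b_4=a_{14}+a_{24}$. I would check this directly as well. Set $\tau=0$ in the Jacobian of the vector field, so that the derivative with respect to $I$ of $\iota Ie^{-mI}$ is $A(1-mI^*)$. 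Then expand $\det(\lambda\mathrm{Id}-J)$ along the $R$-row, which has only two nonzero entries. Using $AB=C$ in the $(3,3)$ entry, that entry becomes $\lambda + mCI^*$. This reproduces the coefficients $b_k$, and confirms that combining the delayed and undelayed parts is legitimate.

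\emph{Step 3: Routh--Hurwitz.} For a real quartic $\lambda^4+b_1\lambda^3+b_2\lambda^2+b_3\lambda+b_4$, all roots lie in the open left half-plane if and only if the following hold:
\begin{itemize}
\item $b_1>0$, $b_3>0$, $b_4>0$;
\item $b_1b_2-b_3>0$;
\item $b_1b_2b_3-b_3^2-b_1^2b_4>0$.
\end{itemize}
These are exactly the inequalities collected in \eqref{eq:H}; the condition $b_2>0$ listed there is implied by the others but does no harm. Hence every root of \eqref{3.1} has negative real part. Since at $\tau=0$ system \eqref{1.1} is an ODE, the principle of linearized stability (Hartman--Grobman) gives that $E^*$ is locally asymptotically stable.

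The only real obstacle is Step 2: confirming that the stated coefficients are the correct ones, with correct signs, so that \eqref{3.1} is really the $\tau=0$ characteristic polynomial. I would handle this by the cofactor expansion described above. Once the coefficients are confirmed, the conclusion follows immediately from hypothesis \eqref{eq:H}.
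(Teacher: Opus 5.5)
Your proposal is correct and follows the same route as the paper: you set $\tau=0$ so that $e^{-\lambda\tau}=1$, obtain the quartic \eqref{3.1} with coefficients $a_{1k}+a_{2k}$, and apply the Routh--Hurwitz criterion, whose inequalities are exactly those collected in \eqref{eq:H}. Your extra verification of the coefficients is something the paper omits: expanding along the $R$-row, with $AB=C$ reducing the $(3,3)$ entry to $\lambda+mCI^*$, does reproduce the stated $a_{1k}$ and $a_{2k}$.
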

\section{Global Asymptotic Stability}

The feasible region for instantaneous population states is defined as
\[
\Omega = \left\{ (S,V,I,R)\in\mathbb R_+^4: S+V+I+R\leq\frac{\Lambda}{d_0} \right\}.
\]
We further introduce the set of admissible initial functions
\[
X=\big\{\psi\in C_+ \,\big|\, \psi(\theta)\in\Omega,\quad \forall\,\theta\in[-\tau,0]\big\},
\]
which collects all non-negative initial histories satisfying the population bound at every historical moment.

\subsection{Global asymptotic stability of the disease-free equilibrium $E^0$}

\begin{thm}
	Assume that 
	\[
	\frac{\iota\Lambda}{d_0(d_0+\gamma_0+d)}<1.
	\]
	Then the disease-free equilibrium
	$E^0=(S^0,V^0,0,0)$ of system \eqref{1.1}
	is globally asymptotically stable in $\Omega$. 
\end{thm}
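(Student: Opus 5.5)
Since the hypothesis $\iota\Lambda/(d_0(d_0+\gamma_0+d))<1$ involves the total-population bound $\Lambda/d_0$ rather than $S^0+\sigma_0V^0$, the plan is a direct comparison argument on the $I$-equation. The boundedness theorem gives that $\Omega$ is positively invariant and $\limsup_{t\to\infty}M(t)\le \Lambda/d_0$. For solutions starting with history in $X$ (values in $\Omega$), $S(t)+\sigma_0V(t)\le S(t)+V(t)\le M(t)\le\Lambda/d_0$ for all $t\ge0$, using $\sigma_0\le 1$. Since $e^{-mI(t-\tau)}\le 1$ and $I\ge0$ by the positivity theorem, the third equation of \eqref{1.1} gives
\[
\frac{dI(t)}{dt}\le \Big(\frac{\iota\Lambda}{d_0}-(d_0+\gamma_0+d)\Big)I(t)=-\kappa I(t),\qquad \kappa:=(d_0+\gamma_0+d)\Big(1-\frac{\iota\Lambda}{d_0(d_0+\gamma_0+d)}\Big)>0.
\]
Hence $I(t)\le I(0)e^{-\kappa t}$, so $I(t)\to0$ exponentially. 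For initial data merely in $C_+$, I will use the same estimate for $t\ge T$ with $\Lambda/d_0+\varepsilon$, where $\varepsilon$ is small enough that the rate remains negative.

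Next I propagate the convergence to the other compartments. From the fourth equation, $R'\le\gamma_0 I-(d_0+\vartheta)R$, so variation of constants with $I\to0$ gives $R(t)\to0$. For $(S,V)$, write $x=(S,V)^T$. Then $x'=Lx+\Lambda e_1+h(t)$, where
\[
L=\begin{pmatrix}-(d_0+\varpi)&\upsilon\\ \varpi&-(d_0+\upsilon)\end{pmatrix},\qquad h(t)=\begin{pmatrix}-\iota I(t)e^{-mI(t-\tau)}S(t)+\vartheta R(t)\\ -\sigma_0\iota I(t)e^{-mI(t-\tau)}V(t)\end{pmatrix}.
\]
Since $S,V$ are bounded and $I,R\to0$, we have $h(t)\to0$. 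The matrix $L$ has eigenvalues $-d_0$ and $-(d_0+\varpi+\upsilon)$, both negative, which are exactly the eigenvalues appearing in the local analysis of $E^0$. A standard asymptotically autonomous (variation of constants) argument then gives $x(t)\to -L^{-1}\Lambda e_1=(S^0,V^0)$. Alternatively, this step can be done by hand: $N=S+V$ satisfies $N'=\Lambda-d_0N+o(1)$, so $N\to\Lambda/d_0$, and $V'=\varpi N-(d_0+\upsilon+\varpi)V+o(1)$ then gives $V\to V^0$.

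This establishes global attractivity. Local asymptotic stability of $E^0$ comes from the earlier theorem, provided $\mathcal R_0<1$. That holds here because $S^0+\sigma_0V^0\le S^0+V^0=\Lambda/d_0$, so
\[
\mathcal R_0=\frac{\iota(S^0+\sigma_0V^0)}{d_0+\gamma_0+d}\le\frac{\iota\Lambda}{d_0(d_0+\gamma_0+d)}<1.
\]
Global attractivity combined with local asymptotic stability gives global asymptotic stability. As an alternative, the Lyapunov functional $W=I$ with LaSalle's invariance principle, applied on the compact invariant set inside $\Omega$, would give the same conclusion.

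The main obstacle is the bookkeeping in the second step: passing rigorously from $I,R\to0$ to $(S,V)\to(S^0,V^0)$ with the delayed factor present. This is handled by the boundedness of solutions and $0<e^{-mI(t-\tau)}\le1$, which together make the perturbation $h(t)$ vanish uniformly.
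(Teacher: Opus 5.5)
Your proof is correct. Its first step is the same as the paper's: bound $S+\sigma_0V\le\Lambda/d_0$ on $\Omega$, use $e^{-mI(t-\tau)}\le 1$, and obtain exponential decay of $I$. The two arguments differ in how convergence is passed to the other compartments.

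The paper sets $I\equiv 0$ to get a limiting linear system in $(S,V,R)$ and checks that this system is globally asymptotically stable at $(S^0,V^0,0)$. It then invokes Thieme's convergence theorem for asymptotically autonomous equations to transfer that convergence to the full solutions.

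You instead use the fact that the limiting system is linear with a Hurwitz matrix. You get $R\to0$ by variation of constants. You then write $(S,V)$ as $x'=Lx+\Lambda e_1+h(t)$ with $h(t)\to0$, which gives $x(t)\to -L^{-1}\Lambda e_1=(S^0,V^0)$ directly, or alternatively through $N=S+V$. This is self-contained and needs no external theorem. The paper's tool is more general, since it would still work if the limiting system were nonlinear, but that generality is not needed here.

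Your version also supplies two things the paper's proof leaves out. First, you observe that
\[
\mathcal R_0=\frac{\iota(S^0+\sigma_0V^0)}{d_0+\gamma_0+d}\le\frac{\iota\Lambda}{d_0(d_0+\gamma_0+d)}<1 .
\]
Combined with the local stability theorem for $E^0$ in Section 3, this gives the Lyapunov-stability half of global asymptotic stability. The paper only proves attractivity and then asserts global asymptotic stability. Second, your $\varepsilon$-slack remark extends attractivity to initial data in $C_+$ that do not start in $\Omega$.
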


\begin{proof}
	Let $N(t)=S(t)+V(t)+I(t)+R(t)$.
	It has been verified that the feasible region $\Omega$ is positively invariant and all solutions are ultimately bounded, with
	\[
	\limsup_{t\rightarrow\infty}N(t)
	\leq \frac{\Lambda}{d_0}.
	\]
	Since $0\le\sigma_0\le 1$, we have
	\[
	S(t)+\sigma_0V(t)
	\leq S(t)+V(t)
	\leq N(t)
	\leq \frac{\Lambda}{d_0}.
	\]
	
	Consider the infected compartment of system \eqref{1.1}.
	Using the estimate $e^{-mI(t-\tau)}\leq1$, we derive
	\[
	\begin{aligned}
		\frac{dI(t)}{dt}
		&=
		\iota I(t)e^{-mI(t-\tau)}
		\big(S(t)+\sigma_0V(t)\big)
		-(d_0+\gamma_0+d)I(t)
		\\
		&\leq
		\left[
		\frac{\iota\Lambda}{d_0}
		-(d_0+\gamma_0+d)
		\right]I(t).
	\end{aligned}
	\]
	Set
	\[
	\delta:=
	(d_0+\gamma_0+d)
	-\frac{\iota\Lambda}{d_0}>0.
	\]
	Then
	\[
	\frac{dI(t)}{dt}\leq-\delta I(t).
	\]
	Combined with nonnegativity $I(t)\ge0$, one obtains
	\[
	\lim_{t\rightarrow\infty}I(t)=0.
	\]
	
	As $I(t)\to0$, the original functional differential system is asymptotically autonomous, whose limiting system is obtained by formally setting $I\equiv0$:
	\[
	\begin{cases}
		\dot S
		=
		\Lambda-(d_0+\varpi)S+\upsilon V+\vartheta R,\\
		\dot V
		=
		\varpi S-(d_0+\upsilon)V,\\
		\dot R
		=
		-(d_0+\vartheta)R.
	\end{cases}
	\]
	The unique equilibrium of this limiting subsystem is $(S^0,V^0,0)$, where
	\[
	S^0=
	\frac{\Lambda(d_0+\upsilon)}
	{d_0(d_0+\varpi+\upsilon)},
	\qquad
	V^0=
	\frac{\varpi\Lambda}
	{d_0(d_0+\varpi+\upsilon)}.
	\]
	
	The coefficient matrix associated with the linearized limiting subsystem reads
	\[
	A=
	\begin{pmatrix}
		-(d_0+\varpi)&\upsilon&\vartheta\\
		\varpi&-(d_0+\upsilon)&0\\
		0&0&-(d_0+\vartheta)
	\end{pmatrix}.
	\]
	Clearly, one eigenvalue is
	\[
	\lambda_1=-(d_0+\vartheta)<0.
	\]
	The remaining two eigenvalues satisfy the quadratic equation
	\[
	\lambda^2+
	(2d_0+\varpi+\upsilon)\lambda
	+d_0(d_0+\varpi+\upsilon)=0.
	\]
	By the Routh--Hurwitz criterion, all roots have negative real parts.
	Therefore, the limiting subsystem is globally asymptotically stable at $(S^0,V^0,0)$.
	
	By the theory of asymptotically autonomous functional differential equations, every $\omega$-limit point of any forward solution of the original system belongs to the invariant set of the limiting system (Theorem 4.1 in \cite{thieme1992convergence}).
	Since all solutions of the limiting subsystem converge to $(S^0,V^0,0)$, we conclude
	\[
	\lim_{t\rightarrow\infty}S(t)=S^0,\quad
	\lim_{t\rightarrow\infty}V(t)=V^0,\quad
	\lim_{t\rightarrow\infty}R(t)=0.
	\]
	Together with $\lim_{t\to\infty}I(t)=0$, we arrive at
	\[
	\lim_{t\rightarrow\infty}
	\big(S(t),V(t),I(t),R(t)\big)
	=
	\big(S^0,V^0,0,0\big)=E^0.
	\]
	Hence, the disease-free equilibrium $E^0$ is globally asymptotically stable in $\Omega$.
\end{proof}

\newtheorem{lemma}{\bf Lemma}[section]

\subsection{Uniform persistence of positive solutions}

We now investigate the uniform persistence of positive solutions in the case where $\mathcal{R}_0>1$. The following lemma will be needed in the subsequent analysis.

Denote by $u(t,\psi)=(S(t,\psi),V(t,\psi),I(t,\psi),R(t,\psi))$ the unique solution of system \eqref{1.1} satisfying the initial function $\psi \in X$.
\begin{lem}
	Let $\psi\in X$. If there exists some $t^*\ge0$ such that $I(t^*,\psi)>0$, then
	\[
	S(t,\psi)>0,\quad V(t,\psi)>0,\quad I(t,\psi)>0,\quad R(t,\psi)>0,\qquad \forall\,t>t^*.
	\]
\end{lem}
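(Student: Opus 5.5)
We first record nonnegativity on the whole forward interval. For $\psi\in X$, every component of $u(t,\psi)$ stays nonnegative for $t\ge0$. This follows from the quasi-positivity of the right-hand side of \eqref{1.1}: each equation has a nonnegative right-hand side whenever its own variable vanishes and the others are nonnegative. The same conclusion can be obtained as a limiting version of the argument of Theorem 2.1, applied with slightly perturbed positive data. Boundedness from the Boundedness theorem then makes the coefficients below finite on every compact time interval.

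\textbf{Step 1: $I$.} Along any solution, set $b_3=d_0+\gamma_0+d$. Since $S,V\ge0$, we have
\[
\frac{dI}{dt}\ge -b_3 I .
\]
Integrating from $t^*$ gives $I(t)\ge I(t^*)e^{-b_3(t-t^*)}>0$ for all $t\ge t^*$. This is the exact analogue of case (3) in Theorem 2.1, with the comparison started at $t^*$ instead of $0$.

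\textbf{Step 2: $R$.} Using the variation-of-constants formula,
\[
R(t)=R(t^*)e^{-(d_0+\vartheta)(t-t^*)}+\int_{t^*}^t e^{-(d_0+\vartheta)(t-s)}\frac{\gamma_0 I(s)}{1+\alpha I(s)}\,ds .
\]
The integrand is strictly positive on $[t^*,t]$ by Step 1 and $R(t^*)\ge0$, so $R(t)>0$ for all $t>t^*$.

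\textbf{Step 3: $S$.} Write the first equation as $\dot S=\Lambda+\upsilon V+\vartheta R-\beta(t)S$. Here $\beta(t)=\iota I(t)e^{-mI(t-\tau)}+d_0+\varpi$ is continuous and bounded on compacts. Variation of constants gives
\[
S(t)\ge S(t^*)e^{-\int_{t^*}^t\beta}+\int_{t^*}^t e^{-\int_s^t\beta}\,\Lambda\,ds>0,
\]
since $\Lambda>0$. In fact this shows $S(t)>0$ for every $t>0$, independently of $I$.

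\textbf{Step 4: $V$.} Treat $V$ the same way, with source term $\varpi S$. Step 3 makes $\varpi S(s)>0$ for $s>t^*$, so the integral term is positive and $V(t)>0$ for $t>t^*$.

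The only delicate point is the starting nonnegativity for initial data in $X$ that may vanish, since Theorem 2.1 assumes strictly positive initial values. I would handle this by the standard invariance argument for $C_+$: for all $\psi\in C_+$ and all components, $f_i(\phi)\ge0$ whenever $\phi\in C_+$ and $\phi_i(0)=0$, which is Smith's criterion. Once nonnegativity is in place, each step above is a one-line variation-of-constants estimate.
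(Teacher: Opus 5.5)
Your proof is correct, but it takes a different route from the paper's, and the difference matters.

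The paper argues each component by contradiction. It posits a first zero $t_j>t^*$ with the component positive on $(t^*,t_j)$. It then uses only homogeneous lower bounds $\dot x\ge -c\,x$, discarding $\Lambda$, $\varpi S$ and $\gamma_0 I/(1+\alpha I)$, to carry positivity from $t^*+\delta$ to $t_j$. That is fine for $I$, which is positive at $t^*$. For $S$, $V$, $R$, however, the setup silently assumes the component is already positive just after $t^*$. If, say, $R(t^*)=0$ (allowed for $\psi\in X$), a homogeneous bound can never produce positivity. The case $S(t^*)=V(t^*)=0$ would need a further argument even for the sign of $V$ near $t^*$.

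Your variation-of-constants formulas keep the forcing terms, so they \emph{create} positivity from $\Lambda>0$, from $\varpi S(s)>0$, and from $\gamma_0 I/(1+\alpha I)>0$. This handles those edge cases uniformly. Your ordering ($I\to R$, then $S$ independently of $I$, then $V$ from $S$) follows the actual source structure of the system.

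You also establish nonnegativity of all components for data in $X$ via the quasi-positivity criterion. The paper's lower bounds tacitly rely on this, but its Theorem 2.1 covers only strictly positive initial values.

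The paper's version is shorter; yours is the complete one.
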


\begin{proof}
	
	We prove the strict positivity componentwise via contradiction arguments.
	
	(1) Positivity of $I(t)$. From the third equation of \eqref{1.1},
	\[
	\frac{dI(t)}{dt} = \iota I(t)e^{-mI(t-\tau)}\big(S(t) + \sigma_0 V(t)\big) - ( d_0 + \gamma_0  + d)I(t).
	\]
	Suppose, for contradiction, that there exists $t_1>t^*$ such that
	\[
	I(t_1,\psi)=0,\qquad I(t,\psi)>0,\quad \forall\,t\in(t^*,t_1).
	\]
	For all $t\in[t^*,t_1]$, we have the lower estimate
	\[
	\frac{dI(t)}{dt}
	\geq
	-\big( d_0+\gamma_0 +d\big)I(t).
	\]
	Applying the comparison principle,
	\[
	I(t_1,\psi) \geq I(t^*,\psi)\exp\big\{-( d_0 + \gamma_0  + d)(t_1-t^*)\big\} > 0,
	\]
	which contradicts $I(t_1,\psi) = 0$. Consequently, $I(t,\psi) > 0$ for all $t > t^*$.
	
	(2) Positivity of $S(t)$. Consider the first equation
	\[
	\frac{dS(t)}{dt} = \Lambda  - \iota I(t)e^{-mI(t-\tau)}S(t) - ( d_0 + \varpi)S(t) + \upsilon V(t) + \vartheta R(t).
	\]
	Assume there exists the first time $t_2 > t^*$ such that $S(t_2,\psi) = 0$ and $S(t,\psi) > 0$ for all $t \in (t^*, t_2)$. On the interval $(t^*,t_2)$,
	\[
	\frac{dS(t)}{dt} \geq -\bigl(\iota I(t)e^{-mI(t-\tau)} +  d_0 + \varpi\bigr)S(t).
	\]
	Let
	\[
	c_1 = \sup_{t\in[t^*,t_2]}\big\{\iota I(t)e^{-mI(t-\tau)}\big\} +  d_0 + \varpi <+\infty.
	\]
	For any fixed small $\delta\in(0,t_2-t^*)$, we have $S(t^*+\delta)>0$ and
	\[
	S(t_2,\psi) \geq S(t^*+\delta)\exp\big\{-c_1(t_2-t^*-\delta)\big\} > 0,
	\]
	which contradicts $S(t_2,\psi)=0$. Hence $S(t,\psi) > 0$ for all $t > t^*$.
	
	(3) Positivity of $V(t)$. The second equation reads
	\[
	\frac{dV(t)}{dt} = \varpi S(t) - \sigma_0 \iota I(t) e^{-mI(t-\tau)} V(t) - ( d_0 + \upsilon) V(t).
	\]
	Suppose there exists the first time $t_3 > t^*$ satisfying $V(t_3,\psi) = 0$ and $V(t,\psi) > 0$ for every $t \in (t^*, t_3)$. For $t\in(t^*,t_3)$,
	\[
	\frac{dV(t)}{dt} \geq -\bigl( \sigma_0 \iota I(t) e^{-mI(t-\tau)} +  d_0 + \upsilon \bigr) V(t).
	\]
	Define
	\[
	c_2 = \sup_{t\in[t^*,t_3]}\big\{\sigma_0 \iota I(t) e^{-mI(t-\tau)}\big\} +  d_0 + \upsilon <+\infty.
	\]
	Take small $\delta>0$, then $V(t^*+\delta)>0$ and
	\[
	V(t_3,\psi) \geq V(t^*+\delta)\exp\big\{-c_2(t_3-t^*-\delta)\big\} > 0,
	\]
	contradicting $V(t_3,\psi)=0$. Therefore, $V(t,\psi) > 0$ for all $t > t^*$.
	
	(4) Positivity of $R(t)$. From the fourth equation,
	\[
	\frac{dR(t)}{dt} = \frac{\gamma_0  I(t)}{1 + \alpha I(t)} - ( d_0 + \vartheta) R(t).
	\]
	Assume for contradiction that there exists the first time $t_4 > t^*$ such that $R(t_4,\psi) = 0$ and $R(t,\psi) > 0$ for all $t \in (t^*, t_4)$. For $t\in(t^*,t_4)$,
	\[
	\frac{dR(t)}{dt} \geq -\big( d_0 + \vartheta\big) R(t).
	\]
	For small $\delta>0$, $R(t^*+\delta)>0$, and the comparison principle yields
	\[
	R(t_4,\psi) \geq R(t^*+\delta)\exp\big\{-( d_0 + \vartheta)(t_4-t^*-\delta)\big\} > 0,
	\]
	which contradicts $R(t_4,\psi)=0$. Thus $R(t,\psi) > 0$ for all $t > t^*$.
	
	Summarizing the four parts, all components satisfy $S(t,\psi)>0$, $V(t,\psi)>0$, $I(t,\psi)>0$, $R(t,\psi)>0$ for every $t>t^*$. The proof is completed.
\end{proof}

\begin{thm}
	Suppose $\mathcal{R}_0>1$ and
	\[
	m \geq \frac{\sigma_0 \iota \varpi\left(1-\sigma_0\right)}{\left(d_0+\upsilon+\varpi\right)\left(d_0+\upsilon+\sigma_0 \varpi\right)}.
	\]
There exists a positive constant $\upsilon_0$ such that every solution
	\[
	u(t,\psi)
	=
	\big(S(t,\psi),V(t,\psi),I(t,\psi),R(t,\psi)\big),
	\]
	with initial function $\psi\in X $ and $\psi_3(0)>0$, satisfies
	\begin{equation}\label{4.1}
		\liminf_{t\to\infty}
		\min
		\big\{
		S(t,\psi),
		V(t,\psi),
		I(t,\psi),
		R(t,\psi)
		\big\}
		\geq
		\upsilon_0.
	\end{equation}
\end{thm}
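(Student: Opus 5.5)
We will use the standard persistence machinery for dissipative semiflows (Hale--Waltman / Zhao's acyclicity theorem). Let $\Phi(t):X\to X$ be the solution semiflow, $\Phi(t)\psi=u_t(\psi)$. By the boundedness result of Section 2 and the positive invariance of $\Omega$, $X$ is positively invariant and $\Phi$ is point dissipative. Since the delay is finite and the right-hand side is smooth and bounded on bounded sets, $\Phi(t)$ is compact for $t\ge\tau$. Hence $\Phi$ has a global attractor in $X$. Define
\[
X_0=\{\psi\in X:\ \psi_3(0)>0\},\qquad \partial X_0=X\setminus X_0 .
\]
By the preceding lemma, $\Phi(t)X_0\subset X_0$, and in fact all components become strictly positive for $t>0$.

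First we identify the boundary dynamics. Let $M_\partial=\{\psi\in\partial X_0:\Phi(t)\psi\in\partial X_0,\ \forall t\ge0\}$. For $\psi\in M_\partial$ we have $I(t)=0$ for all $t\ge0$, so $(S,V,R)$ solves the linear limiting system studied in the proof of global stability of $E^0$. That system converges to $(S^0,V^0,0)$. Therefore $\omega(\psi)=\{E^0\}$ for every $\psi\in M_\partial$, and $\{E^0\}$ is an isolated invariant set that is acyclic in $M_\partial$.

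The key step, which I expect to be the main obstacle, is to show $W^s(E^0)\cap X_0=\emptyset$. Since $\mathcal R_0>1$, pick $\varepsilon>0$ so small that
\[
\iota e^{-m\varepsilon}\big(S^0-\varepsilon+\sigma_0(V^0-\varepsilon)\big)-(d_0+\gamma_0+d)>0 .
\]
This is possible by continuity, because at $\varepsilon=0$ the left side equals $(d_0+\gamma_0+d)(\mathcal R_0-1)$. Suppose some $\psi\in X_0$ has $\limsup_{t\to\infty}\|\Phi(t)\psi-E^0\|<\varepsilon$. Then for $t\ge T$ we have $S>S^0-\varepsilon$, $V>V^0-\varepsilon$, and $I(t-\tau)<\varepsilon$. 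The $I$-equation then gives $I'\ge cI$ with $c>0$. Since $I(T)>0$ by the lemma, $I(t)\to\infty$, which contradicts boundedness. The delicate point is to make this uniform: the sublevel argument must hold in the sup-norm over the delay interval, which is handled by taking $T$ larger by $\tau$. We also need the constant $\varepsilon$ to be independent of $\psi$. This yields that $E^0$ is a uniform weak repeller for $X_0$.

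Applying Zhao's theorem (Theorem 1.3.1 / Hale--Waltman), we get $\eta>0$ such that $\liminf_{t\to\infty} d(\Phi(t)\psi,\partial X_0)\ge\eta$, hence $\liminf I(t)\ge\eta$, uniformly for $\psi\in X_0$. Alternatively we can use a generalized distance function $p(\psi)=\psi_3(0)$ and Smith--Zhao's theorem to get $\liminf I\ge\eta$ directly.

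The remaining components then follow from their equations, using the bound $I\le\Lambda/d_0$ eventually:
\begin{itemize}
\item $S'\ge\Lambda-(\iota\Lambda/d_0+d_0+\varpi)S$, so $\liminf S\ge \Lambda/(\iota\Lambda/d_0+d_0+\varpi)=:s_0$.
\item $V'\ge\varpi s_0/2-(\sigma_0\iota\Lambda/d_0+d_0+\upsilon)V$, which gives a positive lower bound for $V$.
\item $R'\ge \gamma_0\eta/(2(1+\alpha\Lambda/d_0))-(d_0+\vartheta)R$, which gives a positive lower bound for $R$.
\end{itemize}
Taking $\upsilon_0$ as the minimum of these bounds and $\eta$ proves \eqref{4.1}. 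The condition on $m$ enters only to guarantee the setting in which $E^*$ exists; it is not needed in the persistence argument itself.
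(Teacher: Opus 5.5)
Your proposal is correct and follows essentially the same route as the paper: you split $X_0=\{\psi_3(0)>0\}$ from $\partial X_0$ with all boundary orbits converging to $E^0$, choose $\varepsilon$ from $\mathcal R_0>1$ so that $E^0$ is a uniform weak repeller for $X_0$, invoke Zhao's acyclicity theorem (or the function $p(\psi)=\psi_3(0)$) to get $\liminf I\ge\eta$, and then use scalar comparison arguments for $S$, $V$, $R$. Your direct step from $\liminf d(\Phi(t)\psi,\partial X_0)\ge\eta$ to $\liminf I(t)\ge\eta$ is valid because $d(\phi,\partial X_0)\le\phi_3(0)$, and you are right that the hypothesis on $m$ plays no role in the argument.
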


\begin{proof}

	Define the sets
	\[
	X^0
	=
	\left\{
	\psi\in X :
	\psi_3(0)>0
	\right\},\quad
	\partial X ^0=X\setminus X^0
	=
	\left\{
	\psi\in X :
	\psi_3(0)=0
	\right\}.
	\]
	According to system \eqref{1.1}, it is straightforward to verify that both
	$X$ and $X^0$ are positively invariant.
In addition, $\partial X^0$ is relatively closed with respect to $X$.

Let $T(t)$ denote the solution semiflow generated by system \eqref{1.1}. For any solution $u(\cdot,\psi)$ and $t\ge0$, define the segment $u_t\in C$ by
\[
u_t(\theta)=u(t+\theta,\psi),\quad \theta\in[-\tau,0].
\]
Then the semiflow is given by
\[
T(t)\psi
=
u_t(\cdot,\psi),
\qquad
\forall\, t\ge0,\ \psi\in X.
\]

According to \cite[Theorem 3.6.1]{hale1993introduction}, the semiflow $T(t)$ enjoys continuity and compactness for all $t > \tau$. Since every solution of system \eqref{1.1} is ultimately bounded, the semiflow $T(t)$ is point dissipative. Combining these properties and \cite[Theorem 3.4.8]{1988Asymptotic}, the semiflow $T(t)$ possesses a global attractor denoted by $K$.

Define the set
\[
X_\partial = \{\psi \in \partial X^0 : T(t)\psi \in \partial X^0 ,\ \forall\, t \geq 0\}.
\]
For system \eqref{1.1}, any initial  $\psi\in \partial X^0$  implies $I(t,\psi) = 0$ for all $t\ge0$. Thus, the forward trajectory always remains in $\partial X^0$, yielding $X_\partial=\partial X^0$.

We now establish the dynamical property of the semiflow on the  set $X_\partial$.

\noindent\textbf{Claim 1.}
{\it The disease-free equilibrium $E^0$ is globally asymptotically stable with respect to the semiflow $T(t)$ restricted to $X_\partial$.}

	For any $\psi\in X_\partial$, we have $I(t,\psi)\equiv 0$ for all $t\ge0$. Substituting $I\equiv 0$ into \eqref{1.1}, the system reduces to the following boundary subsystem:

 	\[
 	\begin{cases}
 		\dfrac{dS(t)}{dt}
 		=
 		\Lambda -( d_0+\varpi)S(t)+\upsilon V(t)+\vartheta R(t), \\[6pt]
 		
 		\dfrac{dV(t)}{dt}
 		=
 		\varpi S(t)-( d_0+\upsilon)V(t), \\[6pt]
 		
 		\dfrac{dR(t)}{dt}
 		=
 		-( d_0+\vartheta)R(t).
 	\end{cases}
 	\]
	
	It is obvious that $R(t)$ decays exponentially to zero, i.e., $\lim_{t\to\infty}R(t,\psi)=0$. As a result, the long-term dynamics of $S(t)$ and $V(t)$ are asymptotically governed by a stable two-dimensional linear subsystem, which possesses a unique globally attractive nonnegative equilibrium $(S^0,V^0)$. Therefore, all trajectories starting from $X_\partial$ satisfy
	\[
	\lim_{t\to\infty}S(t,\psi)=S^0,
	\qquad
	\lim_{t\to\infty}V(t,\psi)=V^0,
	\qquad
	\lim_{t\to\infty}R(t,\psi)=0.
	\]
	This indicates that $E^0$ is globally attractive on $X_\partial$.

	Furthermore, on the invariant set $X_{\partial}$, system \eqref{1.1} reduces to an autonomous ordinary differential system.
	By the linearization stability criterion for ordinary differential equations, the equilibrium $E^0$ is locally asymptotically stable with respect to $T(t)$ on $X_\partial$.  Combining global attractivity and local asymptotic stability, we conclude that $E^0$ is globally asymptotically stable on $X_\partial$.
	This completes the proof of Claim 1.

	Since $\mathcal{R}_0 > 1$, one can choose a sufficiently small constant $\varepsilon > 0$ such that
	\[
	\mathcal{R}_0^{\epsilon}
	=
	\frac{
		\iota e^{-m\epsilon}
		\left(
		(S^0-\epsilon)+\sigma_0(V^0-\epsilon)
		\right)
	}{
		d_0+\gamma_0 +d
	}
	>1.
	\]

\noindent\textbf{Claim 2.}
{\it For every $\psi\in X^0$,
\[
\limsup_{t\to\infty}
\|T(t)\psi-E^0\|
\geq
\epsilon.
\] }

Suppose on the contrary that the assertion fails.
Then there exists some $\psi\in\ X^0$ such that
\[
\limsup_{t\to\infty}
\|T(t)\psi-E^0\|
<
\epsilon.
\]
We choose sufficiently small $\epsilon>0$ satisfying $S^0-\epsilon>0$ and $V^0-\epsilon>0$.
By the definition of $\limsup$, there exists $\bar{t}>0$ such that
\[
\|T(t)\psi-E^0\|<\epsilon,\quad \forall\, t\geq\bar{t}.
\]
Recall that $T(t)\psi=u_t$ is the solution segment satisfying
\[
u_t(\theta)=\big(S(t+\theta,\psi),V(t+\theta,\psi),I(t+\theta,\psi),R(t+\theta,\psi)\big),\quad \theta\in[-\tau,0].
\]
Since $E^0$ is the constant function $E^0(\theta)=(S^0,V^0,0,0)$, we obtain
\[
|S(t+\theta,\psi)-S^0|<\epsilon,\quad
|V(t+\theta,\psi)-V^0|<\epsilon,\quad
0<I(t+\theta,\psi)<\epsilon,\quad
0<R(t+\theta,\psi)<\epsilon,\quad
\forall\theta\in[-\tau,0],\;\forall t\geq\bar{t}.
\]
In particular, taking $\theta=0$, we have
\[
S(t)>S^0-\epsilon,\quad V(t)>V^0-\epsilon,\quad 0<I(t)<\epsilon,
\quad \forall\, t\geq \bar{t}.
\]

Therefore, the third equation of system \eqref{1.1} yields
\[
\frac{dI(t)}{dt}
\geq
\iota I(t)e^{-m\epsilon}
\big[
(S^0-\epsilon)
+\sigma_0(V^0-\epsilon)
\big]
-( d_0+\gamma_0 +d)I(t).
\]

Define
\[
c
=
\iota e^{-m\epsilon}
\big[
(S^0-\epsilon)
+\sigma_0(V^0-\epsilon)
\big]
-( d_0+\gamma_0 +d).
\]
Since $\mathcal{R}_0^{\epsilon}>1$, we have $c>0$.

Applying the comparison principle, we obtain
\[
I(t)
\geq
I(\bar{t}+\tau)
e^{c\bigl(t-(\bar{t}+\tau)\bigr)},
\qquad
t\geq\bar{t}+\tau.
\]
Hence, $I(t)\to+\infty$ as $t\to\infty$, which contradicts $I(t)<\epsilon$.

Therefore,
\[
\limsup_{t\to\infty}
\|T(t)\psi-E^0\|
\geq
\epsilon,
\qquad
\forall\, \psi\in X_0.
\]

Claim 1 implies that the boundary invariant set $X_\partial$ is acyclic and every trajectory in $X_\partial$ converges to $E^0$. Meanwhile, Claim 2 shows that $E^0$ is an isolated invariant set in $X$, and its stable set $W^s(E^0)$ satisfies $W^s(E^0)\cap X^0=\emptyset$, where $W^s(E^0)$ denotes the stable set associated with the semiflow $T(t)$.
Therefore, by the acyclicity criterion for uniform persistence of autonomous dynamical semiflows \cite[Theorem 1.3.1, Remarks 1.3.1–1.3.2]{zhao2017dynamical}, we conclude that the semiflow $T(t): X\to X$ is uniformly persistent with respect to the pair $(X^0,\partial X^0)$.

Furthermore, according to  \cite[Theorem 2.4]{1995Uniform}, the semiflow $T(t)$ admits a global attractor $A_0\subset X^0$. The uniform persistence property implies that this attractor contains an interior invariant point. Consequently, system \eqref{1.1} admits a positive coexistence steady state $\bar{\psi}=(\bar{\psi}_1,\bar{\psi}_2,\bar{\psi}_3,\bar{\psi}_4)\in X_0$ satisfying $T(t)\bar{\psi}=\bar{\psi}$ for all $t\geq0$.
Since such steady state is time-independent, we set $S^*=\bar{\psi}_1(0)$, $V^*=\bar{\psi}_2(0)$, $I^*=\bar{\psi}_3(0)$ and $R^*=\bar{\psi}_4(0)$. Hence,
$
E^*=(S^*,V^*,I^*,R^*)
$
is a strictly positive endemic equilibrium of system \eqref{1.1}.

Finally, we verify uniform persistence for all state variables by introducing a continuous functional $p:X\to\mathbb{R}_+$ defined by
\[
p(\psi)=\psi_3(0),\quad \forall\psi\in X.
\]
It is straightforward to verify that $X_0 = p^{-1}(0,\infty)$ and $\partial X_0 = p^{-1}(0)$. 

 Let $K$ denote the global attractor of $T(t)$ with the decomposition $K=X_\partial\cup A_0$, where $A_0=K\cap X^0\subset X^0$.
By Claim 2, no trajectory emanating from $X^0$ can converge to $E^0$, which implies $\omega(\psi)\subset A_0$ for all $\psi\in X^0$. Since $A_0$ is compact and $p$ is continuous, the minimum $\eta_1=\min_{\phi\in A_0}p(\phi)$ exists and satisfies $\eta_1>0$.
For any $\phi\in\omega(\psi)$, there exists a sequence $\{t_n\}$ with $t_n\to\infty$ such that $T(t_n)\psi\to\phi$. Continuity of $p$ gives $p(\phi)=\lim_{n\to\infty}p(T(t_n)\psi)\geq\eta_1$. Hence every limit point of $\{p(T(t)\psi)\}_{t\geq0}$ is bounded below by $\eta_1$, and therefore
\[
\liminf_{t\to\infty}p\big(T(t)\psi\big)=\liminf_{t\to\infty}I(t,\psi)\geq\eta_1,\quad \forall\psi\in X^0.
\]

We now derive uniform positive lower bounds for $S(t)$, $V(t)$ and $R(t)$. All population components admit a uniform upper bound $\Lambda/d_0$. For $S(t)$, we have
\[
\frac{dS(t)}{dt}
=\Lambda  - \iota I e^{-mI}S - (d_0+\varpi)S + \upsilon V + \vartheta R
\ge \Lambda  - \left(\iota \frac{\Lambda }{d_0}+d_0+\varpi\right)S.
\]
Let $K_S=\iota\Lambda/d_0+d_0+\varpi$. By the comparison principle for scalar differential inequalities,
\[
S(t)\geq\left(S(s_0)-\frac{\Lambda}{K_S}\right)e^{-K_S(t-s_0)}+\frac{\Lambda}{K_S},\quad \forall\,t\geq s_0.
\]
Taking $t\to\infty$, we obtain $\liminf_{t\to\infty}S(t)\geq\Lambda/K_S>0$.

Similarly, the equation for $V(t)$ gives
\[
\frac{dV(t)}{dt} = \varpi S - \sigma_0\iota I e^{-mI}V - (d_0+\upsilon)V
\ge \varpi S - \left(\sigma_0\iota \frac{\Lambda }{d_0}+d_0+\upsilon\right)V.
\]
Since $\liminf_{t\to\infty}S(t)>0$, an analogous comparison argument yields $\liminf_{t\to\infty}V(t)>0$.

For $R(t)$, we estimate
\[
\frac{dR(t)}{dt} = \frac{\gamma_0 I}{1+\alpha I} - (d_0+\vartheta)R
\ge \frac{\gamma_0 \eta_2}{1+\alpha \Lambda/d_0} - (d_0+\vartheta)R.
\]
Solving this linear differential inequality directly implies $\liminf_{t\to\infty}R(t)>0$.

 Therefore,
\[
\liminf_{t\to\infty}\min\big\{S(t,\psi),V(t,\psi),I(t,\psi),R(t,\psi)\big\}\geq\upsilon_0.
\]
which completes the verification of \eqref{4.1}.
\end{proof}

\subsection{Global asymptotic stability of the unique endemic equilibrium $E^*$}

In this section, we prove the global asymptotic stability of the unique endemic equilibrium \(E^{*}\) for system \eqref{1.1} in the ODE case \(\tau=0\) using the geometric method based on the third additive compound matrix, following the rigorous theoretical framework established in \cite{1999Global,LI2000295}. For the ODE case without time delay (\(\tau=0\)), the system is an autonomous finite-dimensional ordinary differential system, so the additive compound matrix technique for ODEs is applicable.

When \(\tau=0\), system \eqref{1.1} reduces to
\begin{equation}\label{4.2}
	\begin{cases}
		\displaystyle \frac{d S}{d t}=\Lambda -\iota I e^{-m I} S-(d_0+\varpi) S+\upsilon V+\vartheta R, \\[4pt]
		\displaystyle \frac{d V}{d t}=\varpi S-\sigma_0 \iota I e^{-m I} V-(d_0+\upsilon) V, \\[4pt]
		\displaystyle \frac{d I}{d t}=\iota I e^{-m I}(S+\sigma_0 V)-(d_0+\gamma_0 +d) I, \\[4pt]
		\displaystyle \frac{d R}{d t}=\frac{\gamma_0 I}{1+\alpha I}-(d_0+\vartheta) R.
	\end{cases}
\end{equation}

From Theorems 2.1--2.4 and Theorem 4.2, we derive the following preliminary results:
\begin{enumerate}
	\item All solutions starting from positive initial data remain nonnegative and are uniformly ultimately bounded;
	\item If \(\mathcal{R}_0 > 1\) and
	\[
	m \geq \frac{\sigma_0 \iota \varpi\left(1-\sigma_0\right)}{\left(d_0+\upsilon+\varpi\right)\left(d_0+\upsilon+\sigma_0 \varpi\right)},
	\]
	then system \eqref{4.2} has a unique strictly positive endemic equilibrium \(E^{*}\). Furthermore, the system is uniformly persistent on the interior of \(\Omega\): there exists a constant \(\upsilon_0  > 0\) such that 
	\[
	\liminf_{t\to\infty}
	\min
	\big\{
	S(t,\psi),\;
	V(t,\psi),\;
	I(t,\psi),\;
	R(t,\psi)
	\big\}
	\geq
	\upsilon_0 .
	\]
\end{enumerate}

We define the compact absorbing set
\[
\mathcal{K}=\left\{(S,V,I,R)\in\mathbb{R}_{+}^{4}\,\Big|\, S,V,I,R\ge \upsilon_0,\; S+V+I+R\le \frac{\Lambda }{d_0}\right\},
\]
and all positive solutions eventually enter \(\mathcal{K}\) and remain inside for all sufficiently large \(t\).

For any interior state point \(u=(S,V,I,R)\in \Omega\), the Jacobian matrix of system \eqref{4.2} evaluated at \(u\) is
\[
J(u)=
\begin{pmatrix}
	-h_{1}-(d_0+\varpi) & \upsilon & h_{2} & \vartheta \\
	\varpi & -\sigma_0 h_{1}-(d_0+\upsilon) & h_{3} & 0 \\
	h_{1} & \sigma_0 h_{1} & -\big(h_{2}+h_{3}\big)-(d_0+\gamma_0 +d) & 0 \\
	0 & 0 & h_{4} & -(d_0+\vartheta)
\end{pmatrix},
\]
where the state-dependent coefficients are defined as
\[
\begin{aligned}
	&h_{1}=\iota I e^{-m I},\quad h_{2}=\iota(m I-1)e^{-m I}S,\\
	&h_{3}=\sigma_0\iota(m I-1)e^{-m I}V,\quad h_{4}=\frac{\gamma_0 }{(1+\alpha I)^{2}}.
\end{aligned}
\]
The third additive compound matrix corresponding to \(J(u)\) takes the form
\[
J^{[3]}(u)=
\begin{pmatrix}
	-(1+\sigma_0)h_{1}-\big(h_{2}+h_{3}\big)-q_{1} & 0 & 0 & \vartheta \\
	h_{4} & -(1+\sigma_0)h_{1}-q_{2} & h_{3} & -h_{2} \\
	0 & \sigma_0 h_{1} & -\big(h_{1}+h_{2}+h_{3}\big)-q_{3} & \upsilon \\
	0 & -h_{1} & \varpi & -\big(\sigma_0 h_{1}+h_{2}+h_{3}\big)-q_{4}
\end{pmatrix},
\]
with constant parameter combinations
\[
\begin{aligned}
	q_{1}&=(d_0+\varpi)+(d_0+\upsilon)+(d_0+\gamma_0 +d),\\
	q_{2}&=(d_0+\varpi)+(d_0+\upsilon)+(d_0+\vartheta),\\
	q_{3}&=(d_0+\varpi)+(d_0+\gamma_0 +d)+(d_0+\vartheta),\\
	q_{4}&=(d_0+\upsilon)+(d_0+\gamma_0 +d)+(d_0+\vartheta).
\end{aligned}
\]

Along an arbitrary positive trajectory \(u(t)=(S(t),V(t),I(t),R(t))\) of system \eqref{4.2}, we derive the time-varying linear compound system, which constitutes the core object of the geometric stability criterion:
\begin{equation}\label{4.3}
	\dot{U}=J^{[3]}\big(u(t)\big)U,\qquad U=(X,Y,Z,W)^T.
\end{equation}

To investigate the global asymptotic stability of system \eqref{4.2}, we construct the Lyapunov function
\[
V(t;u,U) = \max\{V_1, V_2, V_3\},
\]
where
\[
V_1 = |X|, \quad V_2 = |Y|, \quad V_3 =
\begin{cases}
	|Z + W|, & WZ \ge 0, \\
	\max\big\{|Z|,\, |W|\big\}, & WZ < 0.
\end{cases}
\]
 It is straightforward to verify that
\[
|W + Z| \le V_3,\quad \forall\,(W, Z) \in \mathbb{R}^2.
 \]
There exist positive constants \(c_1,c_2\) such that
\[
c_1\bigl(|X|+|Y|+|Z|+|W|\bigr)
\leq
V
\leq
c_2\bigl(|X|+|Y|+|Z|+|W|\bigr).
\]

To ensure the Dini derivative of \(V\) is uniformly negative over the compact absorbing set \(\mathcal{K}\), we impose the following parameter constraints, which hold for every trajectory point \((S(t),V(t),I(t))\):
\[
\begin{cases}
	\vartheta +\iota e^{-m I}(S+\sigma_0 V)< q_{1},\\[4pt]
	\displaystyle \frac{\gamma_0}{(1+\alpha I)^2} + \iota e^{-mI}\big[ (\sigma_0 V+S) (mI+1)  \big]    < q_{2},\\[4pt]
	\iota I e^{-m I}  + \iota e^{-mI}(S+\sigma_0 V) < 2 d_0+\gamma_0 +d+\vartheta,\\[4pt]
	\iota I e^{-m I} + \iota e^{-mI} (S+\sigma_0 V)< q_{4},\\[4pt]
	\iota I e^{-m I}  + \iota e^{-mI} (S+\sigma_0 V)  < q_{3}.
\end{cases}\tag{P}\label{eq:P}
\]

Let \(D_+V\) denote the upper-right Dini derivative of \(V\) along the solutions of the time-varying  compound system \eqref{4.3}. We calculate \(D^+V\) separately for each case in the subsequent derivation.

\begin{enumerate}
\item If $V_1\ge V_2,V_3$, then $V=V_1$.
\begin{align*}
	D_+ V &= D_+ V_1 = D_+ |X| \\
	&\le -\big[(1+\sigma_0)h_1+h_2+h_3+q_1\big]|X|+\vartheta |W| \\
	&\le -\big[(1+\sigma_0)\iota e^{-m I}I+\iota(m I-1)e^{-m I}S+\sigma_0\iota(m I-1)e^{-m I}V\big]|X|
	+\vartheta |W|-q_1|X| \\
	&\le \big[\iota e^{-m I}(S+\sigma_0 V)-q_1\big]|X|+\vartheta |W| \\
	&\le \big[\vartheta +\iota e^{-m I}(S+\sigma_0 V)-q_1\big]|X| \\
	&= \big[\vartheta +\iota e^{-m I}(S+\sigma_0 V)-q_1\big] V.
\end{align*}

\item If $V_2\ge V_1,V_3$, then $V=V_2$.
\begin{align*}
	D_+ V &= D_+ V_2 = D_+ |Y| \\
	&\leq h_4 |X| - \big[(1+\sigma_0) h_1 + q_2\big] |Y| + \big|h_3\operatorname{sign}(Y)Z\big| + \big|-h_2\operatorname{sign}(Y)W\big| \\
	&\leq h_4 |X| - \big[(1+\sigma_0) h_1 + q_2\big] |Y| + |h_3|\,|Z| + |h_2|\,|W| \\
	&= \frac{\gamma_0}{(1+\alpha I)^2} |X| - \big[(1+\sigma_0)\iota I e^{-mI} + q_2\big] |Y| \\
	&\quad + \big|\iota (m I - 1)\sigma_0 e^{-mI} V\big|\,|Z| + \big|-\iota (mI - 1) e^{-mI} S\big|\,|W| \\
	&\leq \left\{ \frac{\gamma_0}{(1+\alpha I)^2} + \big|\iota(mI-1)\sigma_0 e^{-mI}V\big| + \big|\iota(mI-1)e^{-mI}S\big| - \big[(1+\sigma_0)\iota I e^{-mI}+q_2\big] \right\} |Y| \\
	&\leq  \left\{ \frac{\gamma_0}{(1+\alpha I)^2} + \iota e^{-mI}\big[ (\sigma_0 V+S) (mI+1)  \big]  - q_2 \right\} V.
\end{align*}

\item If $WZ\ge0$, we have $|W+Z|=|W|+|Z|$. When $V_3\ge V_1,V_2$, $V=|W+Z|$.
\[
\begin{aligned}
	D_+ V &= D_+ V_3 = D_+ |W+Z| \le \operatorname{sign}(W+Z)(\dot{W}+\dot{Z}) \\
	&= \sigma_0 h_1 \operatorname{sign}(W+Z)Y - \bigl[(h_1+h_2+h_3)+q_3\bigr]|Z|+ \upsilon |W| - h_1 \operatorname{sign}(W+Z)Y + \varpi |Z| \\
	&\quad - \bigl[(\sigma_0 h_1+h_2+h_3)+q_4\bigr]|W| \\
	&= h_1(\sigma_0-1)\operatorname{sign}(W+Z)Y
	- \bigl[(h_1+h_2+h_3)+q_3\bigr]|Z|+ \upsilon |W| + \varpi |Z| \\
	&\quad - \bigl[(\sigma_0 h_1+h_2+h_3)+q_4\bigr]|W| \\
	&\leq h_1(1-\sigma_0)|Y|
	- \bigl[(h_1+h_2+h_3)+q_3\bigr]|Z|+ \upsilon |W| + \varpi |Z| \\
	&\quad - \bigl[(\sigma_0 h_1+h_2+h_3)+q_4\bigr]|W| \\
	&\leq h_1(1-\sigma_0)|Y|+ \upsilon |W| + \varpi |Z|\\
	&\qquad-\Bigl[\iota I e^{-mI} + \iota(mI-1)e^{-mI}(S+\sigma_0 V)+ (d_0+\varpi)+(d_0+\gamma_0+d)+(d_0+\vartheta)\Bigr]|Z| \\
	&\qquad - \Bigl[\sigma_0 \iota I e^{-mI} + \iota(mI-1)e^{-mI}(S+\sigma_0 V)+ (d_0+\upsilon)+(d_0+\gamma_0+d)+(d_0+\vartheta)\Bigr]|W| \\
	&\leq\left[ \iota I e^{-m I}  + \iota e^{-mI}(S+\sigma_0 V) 
	-(d_0+\gamma_0+d)-(d_0+\vartheta) \right]V.
\end{aligned}
\]

\item If $WZ<0$ and $|Z|\le |W|$, then $V_3\ge V_1,V_2$ yields $V=|W|$.
\[
\begin{aligned}
	D_+ V &= D_+ V_3 = D_+ |W| \le \operatorname{sign}(W)\dot{W} \\
	&= - h_1 \operatorname{sign}(W)Y + \varpi\operatorname{sign}(W)Z - \left[\sigma_0 h_1 + h_2 + h_3 + q_4\right] |W| \\
	&= - h_1 \operatorname{sign}(W)Y - \varpi |Z| - \left[\sigma_0 h_1 + h_2 + h_3 + q_4\right] |W| \\
	&\leq h_1 |Y| - \varpi |Z| - \left[\sigma_0 \iota I e^{-mI} + \iota (mI-1) e^{-mI} (S+\sigma_0 V) +q_4\right] |W| \\
	&\leq h_1 |Y| - \varpi |Z| + \iota e^{-mI}(S+\sigma_0 V)|W| - q_4 |W| \\
	&\leq \left[ h_1 + \iota e^{-mI} (S+\sigma_0 V) -  q_4 \right]|W|\\
	&= \left[ \iota I e^{-m I} + \iota e^{-mI} (S+\sigma_0 V) -  q_4 \right]V.
\end{aligned}
\]

\item If $WZ<0$ and $|W|<|Z|$, then $V_3\ge V_1,V_2$ yields $V=|Z|$.
\[
\begin{aligned}
	D_+ V &= D_+ V_3 = D_+ |Z| \le \operatorname{sign}(Z)\dot{Z} \\
	&= \sigma_0 h_1 \operatorname{sign}(Z)Y - \left[h_1 + h_2 + h_3 + q_3\right] |Z| + \upsilon \operatorname{sign}(Z)W \\
	&= \sigma_0 h_1 \operatorname{sign}(Z)Y - \left[h_1 + h_2 + h_3 + q_3\right] |Z| - \upsilon |W| \\
	&\leq \sigma_0 h_1 |Y| - \left[\iota I e^{-mI} + \iota (mI-1) e^{-mI} (S+\sigma_0 V) +q_3\right] |Z| - \upsilon |W| \\
	&\leq \sigma_0 h_1 |Z| + \iota e^{-mI}(S+\sigma_0 V)|Z| - \upsilon |W| - q_3 |Z| \\
	&\leq \left[ \sigma_0 h_1 + \iota e^{-mI} (S+\sigma_0 V) - q_3 \right]|Z|\\
	&\leq  \left[ \iota I e^{-m I}  + \iota e^{-mI} (S+\sigma_0 V) - q_3 \right]V.
\end{aligned}
\]

\end{enumerate}

	Define
	\[\begin{aligned}
		k_1&=\inf_{\mathcal{K}}\big\{\vartheta +\iota \mathrm{e}^{-m I}(S+\sigma_0 V)-q_1\big\},\\
		k_2&=\inf_{\mathcal{K}}\Bigg\{\frac{\gamma_0}{(1+\alpha I)^2} + \iota e^{-mI}\big[ (\sigma_0 V+S) (mI+1)  \big]   -q_2\Bigg\},\\
		k_3&=\inf_{\mathcal{K}}\big\{\iota I e^{-m I}  + \iota e^{-mI}(S+\sigma_0 V)-(2 d_0+\gamma_0 +d+\vartheta)\big\},\\
		k_4&=\inf_{\mathcal{K}}\big\{\iota I e^{-m I} + \iota e^{-mI} (S+\sigma_0 V)-q_4\big\},\\
		k_5&=\inf_{\mathcal{K}}\big\{\iota I e^{-m I}  + \iota e^{-mI} (S+\sigma_0 V) -q_3\big\},
	\end{aligned}\]
	and set $k=\min\{k_1,k_2,k_3,k_4,k_5\}$. Condition \eqref{eq:P} guarantees $k>0$, such that
	\[
	D_{+}V(t;u,U)\le -kV(t;u,U).
	\]
Therefore, all hypotheses of \cite[Corollary 3.2]{1999Global} hold with $b=k$.
Combining the above volume contraction property with the uniform persistence established earlier, we obtain the global asymptotic stability of the endemic equilibrium $E^*$.

\begin{thm}
	Suppose that \(\mathcal{R}_{0}>1\),
	\[
	m \geq \frac{\sigma_0 \iota \varpi\left(1-\sigma_0\right)}{\left(d_0+\upsilon+\varpi\right)\left(d_0+\upsilon+\sigma_0 \varpi\right)},
	\]
	and condition \eqref{eq:P} holds. Then the unique endemic equilibrium \(E^{*}\) of system \eqref{1.1} is globally asymptotically stable for all initial values \(u_0=(S_0,V_0,I_0,R_0)\in\mathbb{R}_{+}^{4}\) with \(I_0>0\).
\end{thm}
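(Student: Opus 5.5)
The plan is to apply the Li--Muldowney geometric approach in its general form for systems in $\mathbb{R}^n$ (Li and Muldowney \cite{1999Global}, as extended in \cite{LI2000295}). There, global stability of a unique interior equilibrium follows from three facts. First, the ODE system \eqref{4.2} has a compact absorbing set in the interior of the feasible region. Second, $E^*$ is the unique equilibrium there. Third, a Bendixson-type criterion holds: the trajectories of the compound system \eqref{4.3} along solutions in that set contract uniformly. For $n=4$ one would normally use the second compound $J^{[2]}$ to exclude invariant closed curves, i.e. periodic orbits and homoclinic loops. Since the paper has set up $J^{[3]}$, I would instead invoke the higher-order version of \cite[Corollary 3.2]{1999Global}. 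That version says: if $D_+V\le -kV$ along \eqref{4.3} uniformly on $\mathcal{K}$, then every $3$-dimensional volume contracts exponentially. Hence no invariant closed curves exist, and the $\omega$-limit set of every interior trajectory reduces to equilibria.

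The steps, in order, are as follows. First, I use Theorem 2.4 (existence and uniqueness of $E^*$ under $\mathcal{R}_0>1$ and the condition on $m$), specialised to $\tau=0$. Second, I use the uniform persistence theorem of Section 4.2. It gives that every solution with $I_0>0$ eventually enters the compact set $\mathcal{K}$ and stays there. In particular, $\omega$-limit sets lie in $\mathcal{K}$, away from the boundary and from $E^0$. Third, I check the norm equivalence $c_1\|U\|_1\le V\le c_2\|U\|_1$. Then I take the five-case Dini-derivative computation just carried out. It gives $D_+V\le -kV$ on $\mathcal{K}$, with $k=\min\{k_1,\dots,k_5\}>0$; here I would read each $k_i$ as a positive quantity, i.e. the infimum of $q$ minus the bracket, which is positive by \eqref{eq:P}. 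Integrating this bound gives $\|U(t)\|\le (c_2/c_1)e^{-kt}\|U(0)\|$. This is the uniform exponential decay hypothesis, with the exponent $\bar q<0$, in the Li--Muldowney theorem.

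Fourth, I combine these facts with the closing lemma or autonomous convergence theorem. Every $\omega$-limit point of an interior orbit is then an equilibrium in $\mathcal{K}$, so it must equal $E^*$. Fifth, I need local stability of $E^*$. It can either come from the same contraction argument, since the Li--Muldowney conclusion gives local asymptotic stability automatically, or from the Routh--Hurwitz conditions \eqref{eq:H} in Theorem 3.2. Together these give global asymptotic stability for all initial data in $\mathbb{R}^4_+$ with $I_0>0$.

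The main obstacle is the justification that the third compound controls the dynamics sufficiently. The standard closing-lemma argument needs contraction of $2$-surfaces, not of $3$-volumes. I would therefore try first to argue through the general Li--Muldowney theorem on $k$-dimensional measure contraction, adapted to the case where the only recurrent structures excluded are closed curves. If that fails, I would redo the Lyapunov estimate for $J^{[2]}$, a $6\times 6$ matrix, with a similar piecewise norm. A secondary issue is the direction of the $k_i$: the sign convention in their definitions must be flipped so that \eqref{eq:P} indeed yields $k>0$.
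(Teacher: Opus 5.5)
Your outline follows the paper's proof: the same $J^{[3]}$, the same norm $\max\{|X|,|Y|,V_3\}$, the same five Dini-derivative cases, and then \cite[Corollary 3.2]{1999Global}. Your sign correction is also right: \eqref{eq:P} gives $k>0$ only if each $k_i$ is the infimum over $\mathcal{K}$ of the constant minus the state-dependent bracket, not the reverse. However, the step you flag as the main obstacle is a genuine gap, and the paper's proof ends with exactly the same inference without closing it.

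Exponential decay for $\dot U=J^{[3]}(u(t))U$ only says that $3$-dimensional volumes contract, and in $\mathbb{R}^4$ this does not exclude closed orbits. So ``no invariant closed curves'' does not follow, and neither does ``$\omega$-limit sets consist of equilibria''. No adaptation of $k$-volume contraction with $k=3$ can give this, as the following example shows. Take $\dot x=-y+x(1-x^2-y^2)$, $\dot y=x+y(1-x^2-y^2)$, $\dot z=-5z$, $\dot w=-5w$, and write $r^2=x^2+y^2$. The symmetric part of the Jacobian has eigenvalues $1-r^2$, $1-3r^2$, $-5$, $-5$. Hence the Euclidean Lozinskii measure of $J^{[3]}$, which is the sum of the three largest of these, is $\le -3$ everywhere. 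The system has a compact absorbing set and a unique equilibrium. Yet $x^2+y^2=1$, $z=w=0$ is an attracting periodic orbit, and the equilibrium is unstable.

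The Li--Muldowney closing-lemma argument needs contraction of $2$-surfaces, i.e.\ $\bar q_2<0$ for the second compound. $J^{[3]}$ can stand in for $J^{[2]}$ in dimension four only when the flow is confined to an invariant $3$-manifold, such as a conserved total population. Here $N=S+V+I+R$ satisfies $N'=\Lambda-d_0N-dI-\gamma_0\alpha I^2/(1+\alpha I)$, so no such manifold exists. The same example also disposes of the first option in your fifth step. Negativity of all triple sums of eigenvalues of $J(E^*)$ still allows a complex pair in the right half-plane, so local stability is not automatic. The second option fails too, because \eqref{eq:H} is not a hypothesis of the statement.

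Your fallback, a norm estimate for the $6\times 6$ matrix $J^{[2]}$, is the right kind of repair, but it is not a re-run of the same computation under \eqref{eq:P}. Each inequality in \eqref{eq:P} is calibrated against a sum of three diagonal entries of $J$, whereas the diagonal entries of $J^{[2]}$ are sums of only two. For example, $a_{11}+a_{33}=-h_1-(d_0+\varpi)+\iota(1-mI)e^{-mI}(S+\sigma_0V)-(d_0+\gamma_0+d)$ lacks the damping $-(d_0+\vartheta)$ that $q_3$ accounts for. The off-diagonal couplings are also arranged differently. You would therefore have to derive new sufficient conditions, possibly with a state-dependent weight $P(u)$ in $B=P_fP^{-1}+PJ^{[2]}P^{-1}$, and that amounts to changing the hypotheses of the theorem. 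With \eqref{eq:P} alone, neither your argument nor the paper's establishes the global stability claim.
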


\section{Local Hopf Bifurcation}
In this section, we will consider the case of $\tau\neq 0$ and analyze the Hopf bifurcation phenomenon that occurs at the positive equilibrium $E^*$.
\subsection{Existence of Hopf Bifurcation}
For $\tau\neq 0$, the linearized system of \eqref{1.1} at the equilibrium point $E^*$ leads to the following characteristic equation:
\begin{equation}\label{5.1}
	\lambda^4+a_{11}\lambda^3
	+a_{12}\lambda^2+a_{13}\lambda+a_{14}+(a_{21}\lambda^3+a_{22}\lambda^2+a_{23}\lambda+a_{24})e^{-\lambda\tau}=0.
\end{equation}
Assume that Eq.~\eqref{5.1} admits a purely imaginary root $ \lambda = iw, w>0.$ We take it into the equation and further organize to get

\begin{equation}\label{5.2} \begin{cases}
		w^4-a_{12}w^2+a_{14}=(a_{21}w^3-a_{23}w)\sin w\tau+(a_{22}w^2-a_{24})\cos w\tau,\\
		-a_{11}w^3+a_{13}w=(a_{21}w^3-a_{23}w)\cos w\tau-(a_{22}w^2-a_{24})\sin w\tau.
	\end{cases} 
\end{equation}

Then, we can derive
\begin{equation}\label{5.3}
	w^8+s_1w^6+s_2w^4+s_3w^2+s_4=0, \end{equation}
where

$$ s_1=a_{11}^2-2a_{12}-a_{21}^2,\qquad s_2=a_{12}^2+2a_{14}-2a_{11}a_{13}-a_{22}^2+2a_{21}a_{23}, $$

$$ s_3=a_{13}^2-2a_{12}a_{14}+2a_{22}a_{24}-a_{23}^2, \qquad s_4=a_{14}^2-a_{24}^2. $$

Letting $ m=w^2 $, we have
\begin{equation}\label{5.4}
	m^4+s_1m^3+s_2m^2+s_3m+s_4=0. 
\end{equation}
We further investigate the root distribution of equation \eqref{5.4} by adopting the analytical approach proposed by Yan and Li in \cite{Xiang-Ping}.

\begin{lem}There is at least one positive root in Eq.~\eqref{5.4} for $s_{4}<0$.
\end{lem}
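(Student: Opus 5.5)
Write $h(m)=m^4+s_1m^3+s_2m^2+s_3m+s_4$ for the left-hand side of Eq.~\eqref{5.4}, viewed as a real polynomial in the variable $m\in[0,\infty)$. Here $m$ denotes the new variable $w^2$, not the media intensity. The plan is to apply the intermediate value theorem to $h$ on $[0,\infty)$.

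First, evaluate $h$ at the left endpoint. We have $h(0)=s_4=a_{14}^2-a_{24}^2$, and by hypothesis $s_4<0$, so $h(0)<0$.

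Second, examine the behaviour for large $m$. Since $h$ is monic of degree four,
\[
h(m)=m^4\Bigl(1+\frac{s_1}{m}+\frac{s_2}{m^2}+\frac{s_3}{m^3}+\frac{s_4}{m^4}\Bigr)\to+\infty \quad (m\to+\infty).
\]
To make this concrete, I will choose
\[
M_0=1+|s_1|+|s_2|+|s_3|+|s_4|.
\]
For $m\ge M_0\ge 1$, each term satisfies $|s_j|/m^j\le |s_j|/M_0$, so the bracket is at least $1-(M_0-1)/M_0=1/M_0>0$. Hence $h(M_0)>0$.

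Third, conclude. Because $h$ is a polynomial, it is continuous on $[0,M_0]$, and it changes sign there. The intermediate value theorem therefore gives some $m_0\in(0,M_0)$ with $h(m_0)=0$. This $m_0$ is strictly positive, since $h(0)\neq 0$ excludes $m_0=0$. Thus Eq.~\eqref{5.4} has at least one positive root, and $w_0=\sqrt{m_0}>0$ is a candidate frequency for the subsequent Hopf analysis.

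None of these steps is a real obstacle; the argument is elementary. The only point needing care is the explicit bound showing that $h$ becomes positive, and the choice of $M_0$ above handles it.
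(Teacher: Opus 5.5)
Your proof is correct and uses the same intermediate value argument as the paper: $H(0)=s_4<0$ and $H(m)\to+\infty$ as $m\to+\infty$ yield a root $m_0>0$. Your explicit choice $M_0=1+|s_1|+|s_2|+|s_3|+|s_4|$ only makes the limit step quantitative, which the paper leaves implicit.
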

\begin{proof} Denote
	\begin{equation}\label{5.5}
		H(m)=m^4+s_1m^3+s_2m^2+s_3m+s_4.
	\end{equation}
	Since $H(0)=s_4<0$ and $\lim\limits_{m \to +\infty} H(m)=+\infty$, Eq.~\eqref{5.5} has at least one $m_{0}>0$ such that $H(m_{0})=0$.
\end{proof}
Next, when $s_{4}\ge 0$, denote
$$
{r_1}^*=\frac{s_{2}}{2}-\frac{3s_{1}^{2}}{16},\quad
{r_2}^*=\frac{s_{1}^{3}}{32}-\frac{s_{1}s_{2}}{8}+\frac{s_{3}}{4},\quad
{r_3}^*=\left(\frac{{r_2}^*}{2}\right)^{2}+\left(\frac{{r_1}^*}{3}\right)^{3}.
$$
Then, we define
$$\begin{aligned}
	m_1&=-\frac{s_{1}}{4}+\sqrt[3]{-\frac{{r_2}^*}{2}+\sqrt{{r_3}^*}}+\sqrt[3]{-\frac{{r_2}^*}{2}-\sqrt{{r_3}^*}}, &{r_3}^*&>0,\\
	m_2&=\max\left\{-\frac{s_{1}}{4}-2\sqrt[3]{\frac{{r_2}^*}{2}},-\frac{s_1}{4}+2\sqrt[3]{\frac{{r_2}^*}{2}}\right\}, &{r_3}^*&=0,\\
	m_3&=\max\left\{-\frac{s_1}{4}+2\operatorname{Re}\{\delta\},-\frac{s_1}{4}+2\operatorname{Re}\{\delta\varepsilon\},-\frac{s_1}{4}+2\operatorname{Re}\{\delta\bar{\varepsilon}\}\right\}, &{r_3}^*&<0,
\end{aligned}$$
where $\delta$ is one of cube roots of the complex number $-\dfrac{{r_2}^*}{2}+\sqrt{{r_3}^*}$ and $\varepsilon=\dfrac{-1+\sqrt{3}i}{2}$. Therefore, we have the following:
\begin{lem}
	If $s_4\geqslant 0$, Eq.~\eqref{5.5} exists at least one positive root if one of the following conditions holds:
	(a) ${r_3}^*>0$, $m_1>0$ and $H(m_1)<0$,
	(b) ${r_3}^*=0$, $m_2>0$ and $H(m_2)<0$,
	(c) ${r_3}^*<0$, $m_3>0$ and $H(m_3)<0$.
	If $s_4\geqslant 0$, there is no positive root for Eq.~\eqref{5.5} if one of the following conditions holds:
	(a) ${r_3}^*>0$, $m_{1}<0$,
	(b) ${r_3}^*=0$, $m_2<0$,
	(c) ${r_3}^*<0$, $m_3<0$.
\end{lem}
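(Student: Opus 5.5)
The plan is to study the critical points of $H$ defined in \eqref{5.5}. Throughout we use $H(0)=s_4\ge 0$ and $H(m)\to+\infty$ as $m\to+\infty$. Every case of the lemma will follow from two facts: the numbers $m_1,m_2,m_3$ are the largest real zero of $H'$, and the sign of $H'$ is positive to the right of its largest real zero.

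\textbf{Step 1 (identifying $m_1,m_2,m_3$).} I would compute
\[
\tfrac14 H'(m)=m^3+\tfrac34 s_1 m^2+\tfrac12 s_2 m+\tfrac14 s_3
\]
and substitute $m=y-\frac{s_1}{4}$. Expanding should give the depressed cubic $y^3+{r_1}^*y+{r_2}^*=0$. Checking this is the only genuine computation, and I expect it to be the main bookkeeping obstacle. Specifically, one must confirm that the coefficient of $y$ is ${r_1}^*=\frac{s_2}{2}-\frac{3s_1^2}{16}$ and that the constant term is ${r_2}^*=\frac{s_1^3}{32}-\frac{s_1s_2}{8}+\frac{s_3}{4}$. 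Then ${r_3}^*$ is exactly the Cardano discriminant, and Cardano's formula splits into three cases. If ${r_3}^*>0$, the cubic has a unique real root $y$, so $m_1$ is the unique real critical point of $H$. If ${r_3}^*=0$, the real roots are $-2\sqrt[3]{{r_2}^*/2}$ and $\sqrt[3]{{r_2}^*/2}$ (the latter double); the formula for $m_2$ contains the second root with coefficient $2$, which I would reconcile, and either way $m_2$ is the larger real critical point. If ${r_3}^*<0$, there are three real roots $2\operatorname{Re}\{\delta\}$, $2\operatorname{Re}\{\delta\varepsilon\}$ and $2\operatorname{Re}\{\delta\bar\varepsilon\}$, so $m_3$ is the largest critical point. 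In all three cases, write $m^\dagger$ for the relevant $m_i$; it is the largest real zero of $H'$.

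\textbf{Step 2 (existence part).} Suppose $m^\dagger>0$ and $H(m^\dagger)<0$. Since $H$ is continuous and $H(m)\to+\infty$, the intermediate value theorem on $[m^\dagger,\infty)$ gives a root of $H$ greater than $m^\dagger>0$. (One also gets a root in $(0,m^\dagger)$ because $H(0)\ge0$, but this is not needed.) The hypothesis $s_4\ge0$ is not even required here; the condition $H(m^\dagger)<0$ suffices.

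\textbf{Step 3 (nonexistence part).} Suppose $m^\dagger<0$. The leading coefficient of $H'$ is $4>0$, and $H'$ has no zeros in $(m^\dagger,\infty)$, so $H'(m)>0$ for all $m>m^\dagger$, and in particular on $[0,\infty)$. Hence $H$ is strictly increasing on $[0,\infty)$, which gives $H(m)>H(0)=s_4\ge0$ for every $m>0$. Therefore \eqref{5.5} has no positive root. This argument uses only that $m^\dagger$ is the largest real critical point, which is exactly what Step 1 establishes.
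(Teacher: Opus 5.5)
Your proof is correct and takes the standard route. The paper gives no argument of its own and simply defers to Yan and Li, whose proof is exactly this: reduce $H'$ to the depressed cubic $y^3+{r_1}^*y+{r_2}^*=0$ via $m=y-s_1/4$ (your coefficients are right), then use the intermediate value theorem for existence and the monotonicity of $H$ on $[0,\infty)$ for nonexistence.

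One claim needs correcting. In case (b), the statement ``either way $m_2$ is the larger real critical point'' is false for the formula as printed. Write $c=\sqrt[3]{{r_2}^*/2}$. The real zeros of $H'$ are $-s_1/4-2c$ and $-s_1/4+c$, so for $c>0$ the printed $m_2=-s_1/4+2c$ is not a zero of $H'$ at all.

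The lemma still holds, for two reasons:
\begin{itemize}
\item Step 2 never uses criticality.
\item Step 3 only needs $m^\dagger$ to be an upper bound for the real zeros of $H'$. The printed $m_2=-s_1/4+2|c|$ is such a bound, because $2|c|\ge\max\{-2c,c\}$.
\end{itemize}
State Step 3 in that form rather than appealing to ``largest real zero''.

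Also, your parenthetical extra root in $(0,m^\dagger)$ is only guaranteed when $s_4>0$. For $s_4=0$ it can fail: for $H(m)=m^4-m$, the only positive root is $1$, which lies beyond $m^\dagger=4^{-1/3}$. Since you do not use this, it is harmless.
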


Without loss of generality, we suppose that equation \eqref{5.5} has $r$ positive real roots satisfying $1\leqslant r\leqslant 4$, ordered as $m_{1}^*<m_{2}^*<\cdots<m_{r}^*$. Correspondingly, equation \eqref{5.3} admits $r$ positive real roots $w_{k}=\sqrt{m_{k}^*}$ for $1\leqslant k\leqslant r$. Substituting $\lambda=iw$ into system \eqref{5.2} and eliminating trigonometric terms yields
\begin{equation}\label{5.6}
	\sin w\tau=\frac{(w^{4}-a_{12}w^{2}+a_{14})(a_{21}w^{3}-a_{23}w)-(-a_{11}w^{3}+a_{13}w)(a_{22}w^{2}-a_{24})}{(a_{21}w^{3}-a_{23}w)^{2}+(a_{22}w^{2}-a_{24})^{2}}\triangleq \zeta(w).
\end{equation}

Solving the above equation for $\tau$, we obtain a family of critical delays
\begin{equation}\label{5.7}
	\tau_n^{(k)}
	=
	\frac{1}{w_k}\arcsin\zeta(w_k)
	+
	\frac{2n\pi}{w_k},
	\qquad
	k=1,2,\dots,r,\quad n=0,1,2,\dots.
\end{equation}

The pair $\pm iw_k$ are purely imaginary roots of characteristic equation \eqref{5.1}. For each fixed index $k$, the sequence $\{\tau_n^{(k)}\}_{n\ge0}$ is strictly increasing in $n$ with $\lim\limits_{n\to+\infty}\tau_n^{(k)}=+\infty$.
Among all critical delays $\{\tau_n^{(k)}\}$, there uniquely exist indices $k_0\in\{1,2,\dots,r\}$ and $n_0\in\{0,1,2,\dots\}$ such that
\[
\tau_{n_0}^{(k_0)}=\min\big\{\tau_n^{(k)}\;\big|\;k=1,2,\dots,r,\;n=0,1,2,\dots\big\}.
\]

Define the minimal critical delay, the corresponding frequency and the associated squared‑frequency by
\begin{equation}\label{5.8}
	\tau_0 = \tau_{n_0}^{(k_0)},\quad w_0 = w_{k_0},\quad m_0 = m_{k_0}^*.
\end{equation}
Here $\lambda=iw_0$ is the purely imaginary root associated with the first loss of stability at $\tau=\tau_0$.

Let \(\lambda(\tau) = \xi(\tau) + i\omega(\tau)\) be the characteristic root satisfying \(\xi(\tau_n^{(k)}) = 0,\ \omega(\tau_n^{(k)}) = w_k\) and \(m_k^* = w_k^2\). Differentiating both sides of Eq.~\eqref{5.1} with respect to \(\tau\), we derive
\[
\left( \frac{\mathrm{d}\lambda}{\mathrm{d}\tau} \right)^{-1} = -\frac{4\lambda^3 + 3a_{11} \lambda^2 + 2a_{12} \lambda + a_{13}}{\lambda \left( \lambda^4 + a_{11} \lambda^3 + a_{12} \lambda^2 + a_{13} \lambda + a_{14} \right)} + \frac{3a_{21} \lambda^2 + 2a_{22} \lambda + a_{23}}{\lambda \left( a_{21} \lambda^3 + a_{22} \lambda^2 + a_{23} \lambda + a_{24} \right)} - \frac{\tau}{\lambda}.
\]

Substitute \(\lambda=iw_k\) into the above expression and take its real part, then we obtain
\begin{align*}
	\operatorname{sign} \left\{ \operatorname{Re} \left( \frac{\mathrm{d}\lambda}{\mathrm{d}\tau} \right)^{-1} \bigg|_{\lambda = i w_k} \right\}
	&= \operatorname{sign} \left\{ \frac{4w_k^6 + w_k^4 (3a_{11}^2 - 6a_{12}) + w_k^2 (4a_{14} + 2a_{12}^2 - 4a_{13} a_{11}) + (a_{13}^2 - 2a_{12} a_{14})}{w_k^2 (a_{11} w_k^2 - a_{13})^2 + (w_k^4 - a_{12} w_k^2 + a_{14})^2} \right. \\
	&\quad \left. + \frac{-3a_{21}^2 w_k^4 + w_k^2 (4a_{21} a_{23} - 2a_{22}^2) + (2a_{24} a_{22} - a_{23}^2)}{w_k^2 (a_{21} w_k^2 - a_{23})^2 + (a_{22} w_k^2 - a_{24})^2} \right\}.
\end{align*}

From the identity obtained by squaring both sides and adding the two equations in \eqref{5.2},  we have
\[
w_k^{2}(a_{11}w_k^{2}-a_{13})^{2}+(w_k^{4}-a_{12}w_k^{2}+a_{14})^{2}=w_k^{2}(a_{21}w_k^{2}-a_{23})^{2}+(a_{22}w_k^{2}-a_{24})^{2}.
\]
The denominator on the right-hand side is a sum of squares and thus strictly positive. Combining the relation \(m_k^*=w_k^2\), we arrive at
\[
\operatorname{sign}\left\{\frac{\mathrm{d}\operatorname{Re}\lambda}{\mathrm{d}\tau}\bigg|_{\tau=\tau_{n}^{(k)}}\right\}=\operatorname{sign}\left\{\operatorname{Re}\left(\frac{\mathrm{d}\lambda}{\mathrm{d}\tau}\right)^{-1}\bigg|_{\lambda=iw_k}\right\}=\operatorname{sign}\frac{H'(m_k^*)}{w_k^{2}\left(a_{21}w_{k}^{2}-a_{23}\right)^{2}+\left(a_{22}w_{k}^{2}-a_{24}\right)^{2}}.
\]
Since the denominator is always positive, the sign of \(\dfrac{\mathrm{d}\operatorname{Re}\lambda}{\mathrm{d}\tau}\) evaluated at \(\tau=\tau_n^{(k)}\) coincides exactly with the sign of \(H'(m_k^*)\). Combining the previous arguments, we arrive at the following theorem.

\begin{thm}
	Let \(\tau_n^{(k)}\), $w_0$, and \(\tau_0\) be defined by
	\eqref{5.7} and \eqref{5.8}.
	
	\begin{enumerate}
		\item[{\rm (i)}]
		If Eq.~\eqref{5.5} admits no positive solution, then the equilibrium $E^*$ remains locally asymptotically stable for all $\tau>0$.
		
		\item[{\rm (ii)}]
		If Eq.~\eqref{5.5} possesses at least one positive solution, then $E^*$ is locally asymptotically stable whenever
		$
		0\leq\tau<\tau_0,
		$
		whereas it becomes unstable for
		$
		\tau>\tau_0.
		$
		Furthermore, if
		$
		H'(m_k^*)>0,
		$
		then system \eqref{1.1} undergoes a Hopf bifurcation at the equilibrium $E^*$ as $\tau$ crosses the critical value $\tau_n^{(k)}$.
	\end{enumerate}
\end{thm}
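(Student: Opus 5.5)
The argument is the standard continuity-of-roots argument for the transcendental characteristic equation \eqref{5.1}, with the delay-free case as the starting point. At $\tau=0$, \eqref{5.1} reduces to \eqref{3.1}. Under condition \eqref{eq:H} and the earlier theorem on local stability of $E^*$ for $\tau=0$, all roots of \eqref{3.1} lie in the open left half-plane. Because the degree-four polynomial $\lambda^4+\dots$ dominates the delay part, the roots of \eqref{5.1} depend continuously on $\tau\ge 0$. A root can enter the right half-plane only by crossing the imaginary axis; roots cannot come from infinity, since $|a_{21}\lambda^3+\cdots|\,|e^{-\lambda\tau}|$ grows only like $|\lambda|^3$ for $\operatorname{Re}\lambda\ge0$.

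We also rule out $\lambda=0$ as a root. The value of \eqref{5.1} at $\lambda=0$ is $a_{14}+a_{24}$, which is positive by \eqref{eq:H}, so $0$ is never a root for any $\tau$. Hence stability can change only when \eqref{5.1} has a purely imaginary root $iw$ with $w>0$. By the computation leading to \eqref{5.2}–\eqref{5.4}, such a $w$ must satisfy $H(w^2)=0$.

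For part (i), if \eqref{5.5} has no positive root, then no crossing ever occurs. The number of roots of \eqref{5.1} in the right half-plane therefore stays equal to its value at $\tau=0$, which is zero, so $E^*$ remains locally asymptotically stable for every $\tau>0$.

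For part (ii), the purely imaginary roots occur exactly at the delays $\tau_n^{(k)}$ given by \eqref{5.7}. Two checks are needed here:
\begin{itemize}
\item \eqref{5.6} together with the cosine relation from \eqref{5.2} determines $w\tau$ modulo $2\pi$, so \eqref{5.7} must use the correct branch, fixed by the sign of $\cos w\tau$.
\item $\tau_0$ is the smallest of these delays.
\end{itemize}
For $\tau\in[0,\tau_0)$ no crossing has yet occurred, so $E^*$ is locally asymptotically stable. For the transversality condition I use the sign identity derived just before the statement: $\operatorname{sign}\,\frac{d\operatorname{Re}\lambda}{d\tau}\big|_{\tau=\tau_n^{(k)}}=\operatorname{sign}H'(m_k^*)$. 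When $H'(m_k^*)>0$, the pair $\pm iw_k$ crosses from left to right. Two further facts are needed:
\begin{itemize}
\item The root $iw_k$ must be simple. If $iw_k$ were a double root, the derivative expression for $\lambda$ would vanish at $iw_k$, contradicting $\left(\frac{d\lambda}{d\tau}\right)^{-1}$ being finite with nonzero real part.
\item No other root lies on the imaginary axis at that $\tau$, which holds generically when the $w_k$ are distinct.
\end{itemize}
With these, the standard Hopf bifurcation theorem for retarded functional differential equations (Hale) gives a Hopf bifurcation at $\tau_n^{(k)}$.

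The main obstacle is the instability claim for all $\tau>\tau_0$. At $\tau_0$ the crossing pair has $\operatorname{sign}=\operatorname{sign}H'(m_0)$. If this is positive, roots enter the right half-plane. To keep at least one root there for every larger $\tau$, I would argue that crossings at a fixed $w_k$ always occur in the same direction, since the sign depends only on $k$ and not on $n$. Roots leaving the right half-plane could only happen at some $w_j$ with $H'(m_j^*)<0$. I would first try to handle the generic case where $m_0$ is the largest positive root, or where $H'(m_0)>0$ and the successive crossing delays interlace so that the count of right-half-plane roots never returns to zero. If that fails, I would state instability just beyond $\tau_0$, with the implicit assumption $H'(m_0)>0$, which is what the stated result effectively requires.
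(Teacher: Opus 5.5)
Your argument follows the paper's route, made explicit: stability at $\tau=0$ from \eqref{eq:H} (which, like the paper, you have to import from Theorem~3.2, since the statement does not list it), imaginary-axis crossings only at $iw$ with $H(w^2)=0$, the critical delays $\tau_n^{(k)}$, and $\operatorname{sign}\frac{d\operatorname{Re}\lambda}{d\tau}=\operatorname{sign}H'(m_k^*)$. The checks you add are the ones ``combining the previous arguments'' skips, and they are correct:
$\lambda=0$ is excluded because the characteristic function equals $a_{14}+a_{24}>0$ there; roots cannot enter $\operatorname{Re}\lambda\ge0$ from infinity, by the $|\lambda|^3$ versus $|\lambda|^4$ bound, uniformly in $\tau\ge0$; and $H'(m_k^*)\neq0$ forces $iw_k$ to be simple.
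The branch also really must be fixed by the cosine relation. Formula \eqref{5.7} only sees $\sin w_k\tau$, so it returns a delay at which $iw_k$ is not a root whenever the true phase has $\cos w_k\tau<0$. It returns a negative value for $n=0$ when the true phase lies in $(3\pi/2,2\pi)$. Hence the paper's $\tau_0$ need not be the first crossing.
Your ``generically'' for the absence of other critical roots at $\tau_n^{(k)}$ is an extra hypothesis rather than a proof. Only nonresonance is needed (no roots $\pm i l w_k$ with $l\ge2$), and the paper assumes this silently as well.

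The step you flag as the main obstacle cannot be closed, because ``unstable for all $\tau>\tau_0$'' is false in general, and the paper gives no argument for it. Suppose the positive roots $m_1^*<\dots<m_r^*$ of $H$ are simple. Then the number of roots of \eqref{5.1} in $\operatorname{Re}\lambda>0$ at a noncritical $\tau$ equals $2\sum_k\operatorname{sign}H'(m_k^*)\cdot\#\{n:\tau_n^{(k)}<\tau\}$, and the signs of $H'(m_k^*)$ alternate with $H'(m_r^*)>0$. So for $r\ge2$ stabilizing crossings exist. An ordering such as $\tau_0^{(r)}<\tau_0^{(r-1)}<\tau_1^{(r)}$ brings the count back to $0$ on $(\tau_0^{(r-1)},\tau_1^{(r)})$, which is a stability switch. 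This happens even though the first crossing is at the largest root $m_r^*$, so your proposed special case does not rescue the claim.

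What is provable is the following.
(a) Instability on some interval $(\tau_0,\tau_0+\delta)$ whenever $H'(m_0)\neq0$, with no sign assumption needed. If $H'(m_0)<0$, there would be a root in $\operatorname{Re}\lambda>0$ for $\tau$ slightly below $\tau_0$, contradicting stability on $[0,\tau_0)$.
(b) Instability for every $\tau>\tau_0$ when $H$ has exactly one positive root $m^*$ and it is simple. Then $H$ passes from negative to positive at $m^*$, so $H'(m^*)>0$, and every crossing adds a pair.
One of these has to replace the last assertion of part (ii).
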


\subsection{Bifurcation Direction and Stability Characteristics}

To characterize the local properties of periodic solutions bifurcating from
\(E^*\), we utilize the center manifold reduction and normal form theory developed in \cite{hassard1981theory}. The subsequent analysis follows the framework proposed in \cite{balasubramaniam2015stability,xu2022delayed}, with suitable adaptations for the present model.

Let \(\tau=\tau_0+\varepsilon\), where \(\varepsilon\) denotes a small perturbation from the critical delay \(\tau_0\). When \(\varepsilon=0\), system \eqref{1.1} undergoes a Hopf bifurcation at the equilibrium \(E^*\). Define the perturbation variables by
\[
u_1(t)=S(t)-S^*,\quad
u_2(t)=V(t)-V^*,\quad
u_3(t)=I(t)-I^*,\quad
u_4(t)=R(t)-R^*.
\]

Accordingly, system \eqref{1.1} can be rewritten as the following functional differential equation
\begin{equation}\label{5.9}
	\dot{u}(t) = L_{\varepsilon}(u_{t}) + F(\varepsilon, u_{t}),
\end{equation}
where \(u(t) = (u_{1}(t), u_{2}(t), u_{3}(t), u_{4}(t))^{T} \in \mathbb{R}^{4}\), and the history segment \(u_{t}\in C([-\tau,\ 0], \mathbb{R}^{4})\) satisfies \(u_{t}(q) = u(t + q)\) for all \(q \in [-\tau,\ 0]\). Here, \(L_{\varepsilon}: C([-\tau,\ 0], \mathbb{R}^{4}) \to \mathbb{R}^{4}\) and \(F: \mathbb{R} \times C([-\tau,\ 0], \mathbb{R}^{4}) \to \mathbb{R}^{4}\).  For any \(\phi \in C([-\tau,\ 0], \mathbb{R}^{4})\), we define
\[
L_{\varepsilon}(\phi) = A \phi(0) + B \phi(-\tau),
\]
in which
\[
A = \begin{pmatrix}
	-(d_0+\varpi) & \upsilon & 0 & \vartheta \\
	\varpi & -(d_0+\upsilon) &0 & 0 \\
	0 &0 & -(d_0+\gamma_0 +d) &0\\
	0 & 0 & \dfrac{\gamma_0 }{(1+\alpha I^*)^2} & -(d_0+\vartheta)
\end{pmatrix},
\]
\[
B = \begin{pmatrix}
	-\iota I^*e^{-mI^*} & 0 & -\iota S^* e^{-mI^*}+\iota I^* S^* m e^{-mI^*} & 0 \\
	0 & -\sigma_0\iota I^*e^{-mI^*} &-\sigma_0\iota V^* e^{-mI^*}+\sigma_0\iota I^* V^* m e^{-mI^*} & 0 \\
	\iota I^*e^{-mI^*} &\sigma_0\iota I^*e^{-mI^*} & \iota(S^*+\sigma_0 V^*) e^{-mI^*}-m\iota(S^*+\sigma_0 V^*) e^{-mI^*}I^* &0\\
	0 & 0 & 0&0
\end{pmatrix},
\]
and the nonlinear term is expressed as
\[
F(\varepsilon, \phi) =
\left(
\begin{array}{c}
	-\iota\phi_3(0)e^{-m\phi_3(-\tau)}\phi_1(0)\\
	-\sigma_0\iota\phi_3(0)e^{-m\phi_3(-\tau)}\phi_2(0) \\
	\iota\phi_3(0)e^{-m\phi_3(-\tau)}\big(\phi_1(0)+\sigma_0\phi_2(0)\big)\\
	-\dfrac{\gamma_0  \alpha}{(1+\alpha I^*)^3} \phi_3^2(0)
\end{array}
\right).
\]

By virtue of the Riesz representation theorem for bounded linear operators on $C([-\tau,0],\mathbb{C}^4)$, the linear operator \(L_{\varepsilon}\) admits the integral representation
\[
L_{\varepsilon}\phi
=
\int_{-\tau}^{0}
\mathrm{d}\eta(q,\varepsilon)\,\phi(q),
\]
where \(\eta(q,\varepsilon)\) denotes a matrix-valued function of bounded variation defined on \([-\tau,0]\), and \(\mathrm{d}\eta(q,\varepsilon)\) stands for the corresponding Lebesgue–Stieltjes measure.
We further set
\[
\mathrm{d}\eta(q, \varepsilon) = \left(A \delta(q) + B \delta(q + \tau)\right)\mathrm{d} q,
\]
in which $\delta(\cdot)$ denotes the Dirac delta distribution satisfying
\[
\int_{-\tau}^0\delta(q)\phi(q)\mathrm{d}q=\phi(0),\qquad
\int_{-\tau}^0\delta(q+\tau)\phi(q)\mathrm{d}q=\phi(-\tau)
\]
for any continuous function $\phi$.

From now on, all vector-valued functions are considered over \(\mathbb{C}^4\) instead of \(\mathbb{R}^4\) to facilitate complex eigenvalue analysis.
For any $\phi\in C^{1}([-\tau,0],\mathbb{C}^4)$, we define
\[
\mathcal{A}(\varepsilon)\phi(q) = 
\begin{cases}
	\dfrac{\mathrm{d}\phi(q)}{\mathrm{d}q}, & q \in [-\tau, 0), \\[6pt]
	\displaystyle\int_{-\tau}^{0} \mathrm{d}\eta(q, \varepsilon) \phi(q) = L_{\varepsilon}\phi, & q = 0,
\end{cases}
\]
and
\[
N(\varepsilon)\phi(q) = 
\begin{cases}
	0, & q \in [-\tau, 0), \\
	F(\varepsilon, \phi), & q = 0.
\end{cases}
\]

Accordingly, system \eqref{5.9} can be rewritten in the equivalent abstract form
\begin{equation}\label{5.10}
	\dot{u}_{t} = \mathcal{A}(\varepsilon)u_{t} + N(\varepsilon)u_{t}.
\end{equation}

For any $\zeta \in C^{1}([0, \tau], \mathbb{C}^{4})$, the adjoint operator $\mathcal{A}^{*}(\varepsilon)$ is defined by
\[
\mathcal{A}^{*}(\varepsilon)\zeta(r) = 
\begin{cases}
	-\dfrac{\mathrm{d}\zeta(r)}{\mathrm{d}r}, & r \in (0, \tau], \\[6pt]
	\displaystyle\int_{-\tau}^{0} \mathrm{d}\eta^{T}(q, \varepsilon)\zeta(-q), & r = 0,
\end{cases}
\]
together with the bilinear inner product
\begin{equation}\label{5.11}
	\langle \zeta, \phi \rangle
	= \overline{\zeta}^{T}(0) \phi(0)
	- \int_{-\tau}^{0} \int_{0}^{q}
	\overline{\zeta}^{T}(\beta-q) \mathrm{d}\eta(q) \phi(\beta)\mathrm{d}\beta, 
\end{equation}
where $\eta(q)=\eta(q,0)$. The operators $\mathcal{A}(\varepsilon)$ and $\mathcal{A}^{*}(\varepsilon)$ are adjoint with respect to the inner product $\langle\cdot,\cdot\rangle$.

It is well known that \(\pm i\omega_0\) are eigenvalues of the operator \(\mathcal{A}(0)\). Then \(\mp i\omega_0\) belong to the spectrum of its adjoint operator \(\mathcal{A}^{*}(0)\). We then seek the corresponding eigenvectors in the form
\[
\rho(q)
=
(1,\rho_2,\rho_3,\rho_4)^{T}e^{i\omega_0 q},
\]
and
\[
\rho^{*}(r)
=
\overline{G}\,(1,\rho_2^{*},\rho_3^{*},\rho_4^{*})^{T}
e^{i\omega_0 r},
\]
which correspond to the eigenvalue \(i\omega_0\) of \(\mathcal{A}(0)\) and the eigenvalue \(-i\omega_0\) of \(\mathcal{A}^{*}(0)\), respectively.

It follows that
\[
\big(A + B e^{-i \omega_{0} \tau_{0}} - i \omega_{0} I_4\big) \rho(0) = 0,
\]
namely
\[
\begin{cases}
	-\bigl(d_0 + \varpi + i\omega_0\bigr) + \upsilon \rho_2 + \vartheta \rho_4
	+ \Bigl[-\iota I^* e^{-mI^*} + \bigl(-\iota S^* e^{-mI^*} + \iota I^* S^* m e^{-mI^*}\bigr)\rho_3\Bigr]e^{-i\omega_0\tau_0} = 0,\\[4pt]
	\varpi - \bigl(d_0 + \upsilon + i\omega_0\bigr)\rho_2
	+ \Bigl[-\sigma_0 \iota I^* e^{-mI^*}\rho_2 + \bigl(-\sigma_0 \iota V^* e^{-mI^*} + \sigma_0 \iota I^* V^* m e^{-mI^*}\bigr)\rho_3\Bigr]e^{-i\omega_0\tau_0}=0,\\[4pt]
	-\bigl(d_0+\gamma_0+d+i\omega_0\bigr)\rho_3
	+\Bigl[\iota I^* e^{-mI^*}+\sigma_0\iota I^* e^{-mI^*}\rho_2
	+\bigl(\iota(S^*+\sigma_0V^*)e^{-mI^*}-m\iota(S^*+\sigma_0V^*)e^{-mI^*}I^*\bigr)\rho_3\Bigr]e^{-i\omega_0\tau_0}=0,\\[4pt]
	\dfrac{\gamma_0}{(1+\alpha I^*)^2}\rho_3 - \bigl(d_0+\vartheta+i\omega_0\bigr)\rho_4 = 0.
\end{cases}
\]

Further computation yields
$$
\begin{aligned}
	\rho_2=&\frac{1}{\upsilon}\left\{q_1+\left[\iota S^* e^{-mI^*}e^{-i\omega_0\tau_0}-\iota I^* S^* m e^{-mI^*}e^{-i\omega_0\tau_0}-\frac{\gamma_0 \vartheta}{(d_0+\vartheta+i\omega_0)(1+\alpha I^*)^2}\right]\rho_3\right\},\\[4pt]
	\rho_3=&\frac{q_1 q_2 -\upsilon \varpi}{\left(\sigma_0\iota I^* V^* m e^{-mI^*}-\sigma_0\iota V^* e^{-mI^*}\right)\upsilon e^{-i\omega_0\tau_0}-q_2\left[\iota S^* e^{-mI^*}e^{-i\omega_0\tau_0}-\iota I^* S^* m e^{-mI^*}e^{-i\omega_0\tau_0}-\frac{\gamma_0 \vartheta}{(d_0+\vartheta+i\omega_0)(1+\alpha I^*)^2}\right]},\\[4pt]
	\rho_4=&\frac{\gamma_0 }{(d_0+\vartheta+i\omega_0)(1+\alpha I^*)^2}\rho_3,
\end{aligned}
$$
where
$$q_1= d_0+\varpi+i\omega_0+\iota I^*e^{-mI^*}e^{-i\omega_0\tau_0},\qquad q_2= d_0+\upsilon+i\omega_0+\sigma_0\iota I^* e^{-mI^*}e^{-i\omega_0\tau_0}.$$

By adopting the same derivation procedure for \(\rho^{*}(r)\), we obtain the relation
\[
\big(A^{T}+B^{T}e^{i\omega_0\tau_0}+i\omega_0 I_4\big)\rho^{*}(0)=0.
\]
Accordingly, we deduce
\[
\begin{cases}
	-(d_0+\varpi-i\omega_0) + \varpi\rho_2^* 
	+ \bigl[-\iota I^* e^{-m I^*}+ \iota I^* e^{-m I^*} \rho_3^*\bigr] e^{i\omega_0 \tau_0} = 0,\\[4pt]
	\upsilon-(d_0+\upsilon-i\omega_0)\rho_2^*
	+\bigl[-\sigma_0\iota I^* e^{-m I^*}\rho_2^*+ \sigma_0\iota I^* e^{-m I^*} \rho_3^*\bigr] e^{i\omega_0 \tau_0}=0,\\[4pt]
	-(d_0+\gamma_0 +d-i\omega_0)\rho_3^*+\dfrac{\gamma_0 }{(1+\alpha I^*)^2}\rho_4^*
	+\bigl[-\iota S^* e^{-m I^*}+ \iota I^*S^*m e^{-m I^*}\bigr] e^{i\omega_0 \tau_0} \\
	\quad+\bigl[-\sigma_0\iota V^* e^{-m I^*}+ \sigma_0\iota I^*V^*m e^{-m I^*}\bigr] e^{i\omega_0 \tau_0}\rho_2^*
	+\bigl[\iota(S^*+\sigma_0 V^*)e^{-mI^*}-m\iota(S^*+\sigma_0 V^*)e^{-mI^*}I^*\bigr]e^{i\omega_0\tau_0}\rho_3^*=0,\\[4pt]
	\vartheta-(d_0+\vartheta-i\omega_0)\rho_4^*=0.
\end{cases}
\]
and further derive
$$
\begin{aligned}
	\rho_2^*=&\frac{1}{\varpi}\left[( d_0+\varpi-i\omega_0)+\iota I^* e^{-mI^*}e^{i\omega_0\tau_0}-\iota I^*e^{-mI^*}e^{i\omega_0\tau_0}\rho_3^*\right],\\[4pt]
	\rho_3^*=&\frac{( d_0+\upsilon-i\omega_0+\sigma_0\iota I^* e^{-mI^*}e^{i\omega_0\tau_0})( d_0+\varpi-i\omega_0+\iota I^* e^{-mI^*}e^{i\omega_0\tau_0})-\upsilon \varpi}{\sigma_0 \iota I^* e^{-mI^*}e^{i\omega_0\tau_0}\varpi+( d_0+\upsilon-i\omega_0+\sigma_0\iota I^* e^{-mI^*}e^{i\omega_0\tau_0})\iota I^* e^{-mI^*}e^{i\omega_0\tau_0}},\\[4pt]
	\rho_4^*=&\frac{\vartheta}{ d_0+\vartheta-i\omega_0}.
\end{aligned}
$$

Combining formula \eqref{5.11}, we impose the normalization condition $\langle\rho^{*},\rho\rangle=1$. The orthogonality identity $\langle\rho^{*},\overline{\rho}\rangle=0$ holds automatically due to the spectral property of eigenvalues. We can solve for the constant \(\overline{G}\) as follows
\[
\begin{aligned}
	\langle\rho^{*},\rho\rangle 
	&= \overline{G}\left(1+\rho_{2}\overline{\rho_2^*}+\rho_{3}\overline{\rho_3^*}+\rho_{4}\overline{\rho_4^*}\right) \\
	&\quad -\int_{q=-\tau_{0}}^{0}\int_{\beta=0}^{q}\overline{G}\left(1,\overline{\rho_2^*},\overline{\rho_3^*},\overline{\rho_4^*}\right)
	e^{-iw_{0}(\beta-q)}\mathrm{d}\eta(q)(1,\rho_{2},\rho_{3},\rho_{4})^{T}e^{i\omega_{0}\beta}\mathrm{d}\beta \\[4pt]
	&= \overline{G}\Biggl\{1+\sum_{k=2}^{4}\rho_{k}\overline{\rho_k^*}
	+\tau_{0}e^{-i\omega_{0}\tau_0}\Bigl[
	-\iota I^* e^{-mI^*}+\iota I^* e^{-mI^*}\rho_3^* \\
	&\quad +\left(\sigma_0\iota I^* e^{-mI^*}\rho_3^*-\sigma_0\iota I^* e^{-mI^*}\rho_2^*\right)\rho_2 \\
	&\quad +\left(-\iota S^*e^{-mI^*}+\iota I^* S^* me^{-mI^*}\right)\rho_3 \\
	&\quad +\left(-\sigma_0\iota V^* e^{-mI^*}+\sigma_0\iota I^* V^* me^{-mI^*}\right)\rho_2^*\rho_3 \\
	&\quad +\bigl[\iota(S^*+\sigma_0 V^*)e^{-mI^*}-m\iota(S^*+\sigma_0 V^*)e^{-mI^*}I^*\bigr]\rho_3^*\rho_3
	\Bigr]\Biggr\} = 1,
\end{aligned}
\]
which yields
$$\begin{aligned}
	\overline{G}=&\Biggl\{1+\sum_{k=2}^{4}\rho_{k}\overline{\rho_k^*}
	+\tau_{0}e^{-i\omega_{0}\tau_0}\Bigl[
	-\iota I^* e^{-mI^*}+\iota I^* e^{-mI^*}\rho_3^* \\
	&\quad +\left(\sigma_0\iota I^* e^{-mI^*}\rho_3^*-\sigma_0\iota I^* e^{-mI^*}\rho_2^*\right)\rho_2 \\
	&\quad +\left(-\iota S^*e^{-mI^*}+\iota I^* S^* me^{-mI^*}\right)\rho_3 \\
	&\quad +\left(-\sigma_0\iota V^* e^{-mI^*}+\sigma_0\iota I^* V^* me^{-mI^*}\right)\rho_2^*\rho_3 \\
	&\quad +\bigl[\iota(S^*+\sigma_0 V^*)e^{-mI^*}-m\iota(S^*+\sigma_0 V^*)e^{-mI^*}I^*\bigr]\rho_3^*\rho_3
	\Bigr]\Biggr\}^{-1}.
\end{aligned}$$

After that, we introduce the following notations. Under the condition $\varepsilon=0$, we regard $u_{t}$ as the solution of equation \eqref{5.9}. To further characterize the central manifold $C_{0}$, we define the relevant coordinate expressions as given below
\begin{equation}\label{5.12}
	z(t)=\langle\rho^{*},u_{t}\rangle,\qquad W(t,q)=u_{t}-2\operatorname{Re}\{z(t)\rho(q)\}.
\end{equation}

Restricted on the central manifold $C_0$, we have $W(t,q)=W(z(t),\overline{z}(t),q)$, and its formal expansion reads
\begin{equation}\label{5.13}
	W(z(t),\overline{z}(t),q)=W_{20}(q)\frac{z^{2}}{2}+W_{11}(q)z\overline{z}+W_{02}(q)\frac{\overline{z}^{2}}{2}+\cdots.
\end{equation}

Given that $z(t)$ and $\overline{z}(t)$ serve as local coordinates of the central manifold $C_0$ along the directions corresponding to $\rho^{*}$ and $\overline{\rho^{*}}$, we may further deduce that 
\begin{align*}
	\dot{z}(t)&=\langle\rho^{*},\dot{u}_{t}\rangle=\langle\rho^{*},A(0)u_{t}+N(0)u_{t}\rangle,\\
	&=iw_{0}z(t)+\overline{\rho^{*}}^{T}(0)\cdot F\big(0,W(z(t),\overline{z}(t),0)+2\operatorname{Re}\{z(t)\rho(0)\}\big),\\
	&\triangleq iw_{0}z(t)+\overline{\rho^{*}}^{T}(0)\cdot f_{0}(z(t),\overline{z}(t)),
\end{align*}
where $f_{0}(z,\overline{z})=F\big(0,W(z,\overline{z},q)+z(t)\rho(q)+\overline{z}(t)\overline{\rho}(q)\big)$. Notice that
$$
f_{0}=f_{20}\frac{z^{2}}{2}+f_{11}z\overline{z}+f_{02}\frac{\overline{z}^{2}}{2}+f_{21}\frac{z^{2}\overline{z}}{2}+\cdots,
$$
then we rewrite the dynamic equation as
$$
\dot{z}(t)=iw_{0}z(t)+g(z(t),\overline{z}(t)),
$$
with
\begin{equation}\label{5.14}
	g(z(t),\bar{z}(t))=\overline{\rho^{*}}^{T}(0)\cdot f_{0}(z(t),\bar{z}(t))=g_{20}\frac{z^{2}}{2}+g_{11}z\bar{z}+g_{02}\frac{\bar{z}^{2}}{2}+g_{21}\frac{z^{2}\bar{z}}{2}+\cdots.
\end{equation}

From relations \eqref{5.12} and \eqref{5.13}, one can obtain
$$
\begin{aligned}
	u_{t}&=W(t,q)+2\operatorname{Re}\{z(t)\rho(q)\}\\
	&=W_{20}(q)\frac{z^{2}}{2}+W_{11}(q)z\bar{z}+W_{02}(q)\frac{\bar{z}^{2}}{2}+z(t)\rho+\bar{z}(t)\bar{\rho}+\cdots,
\end{aligned}
$$
Substituting this formula into the expression of $F(\varepsilon,\zeta)$ yields
\begin{equation}\label{5.15}
	\begin{aligned}
		g(z(t),\bar{z}(t)) &= \overline{\rho^{*}}^{T}(0) \cdot f_{0}(z(t),\bar{z}(t)) \\
		&= \left(1,\overline{\rho_{2}^{*}},\overline{\rho_{3}^{*}},\overline{\rho_{4}^{*}}\right)\bar{G}
		\begin{pmatrix}
			-\iota\phi_3(0)e^{-m\phi_3(-\tau)}\phi_1(0)\\
			-\sigma_0\iota\phi_3(0)e^{-m\phi_3(-\tau)}\phi_2(0) \\
			\iota\phi_3(0)e^{-m\phi_3(-\tau)}\big(\phi_1(0)+\sigma_0\phi_2(0)\big)\\
			-\dfrac{\gamma_0  \alpha}{(1+\alpha I^*)^3} \phi_3^2(0)
		\end{pmatrix},
	\end{aligned}
\end{equation}
in which
$$
\begin{aligned}
	\phi_{1}(0)&=z+\bar{z}+W_{20}^{(1)}(0)\frac{z^{2}}{2}+W_{11}^{(1)}(0)z\bar{z}+W_{02}^{(1)}(0)\frac{\bar{z}^{2}}{2}+\cdots,\\
	\phi_{2}(0)&=\rho_{2}z+\bar{\rho}_{2}\bar{z}+W_{20}^{(2)}(0)\frac{z^{2}}{2}+W_{11}^{(2)}(0)z\bar{z}+W_{02}^{(2)}(0)\frac{\bar{z}^{2}}{2}+\cdots,\\
	\phi_{3}(0)&=\rho_{3}z+\bar{\rho}_{3}\bar{z}+W_{20}^{(3)}(0)\frac{z^{2}}{2}+W_{11}^{(3)}(0)z\bar{z}+W_{02}^{(3)}(0)\frac{\bar{z}^{2}}{2}+\cdots,\\
	\phi_{3}(-\tau)&=\rho_{3}z+\bar{\rho}_{3}\bar{z}+W_{20}^{(3)}(-\tau)\frac{z^{2}}{2}+W_{11}^{(3)}(-\tau)z\bar{z}+W_{02}^{(3)}(-\tau)\frac{\bar{z}^{2}}{2}+\cdots,
\end{aligned}$$

Based on formulas \eqref{5.14} and \eqref{5.15}, we arrive at
$$
\begin{aligned}
	g(z(t),\overline{z}(t))=&\overline{G}\biggr\{\left(\overline{\rho_3^*}-1\right)\iota e^{-m\phi_3(-\tau)}\phi_1(0)\phi_3(0)+\left(\overline{\rho_3^*}-\overline{\rho_2^*}\right)\sigma_0\iota e^{-m\phi_3(-\tau)}\phi_2(0)\phi_3(0)\\
	&-\frac{\gamma_0 \alpha}{(1+\alpha I^*)^3}\phi_3^2 (0)\overline{\rho_4^*}\biggr\}\\
	=&\overline{G}\biggr\{\biggr(\left(\overline{\rho_3^*}-1\right)\rho_3\iota+\left(\overline{\rho_3^*}-\overline{\rho_2^*}\right)\sigma_0\iota\rho_2\rho_3-\dfrac{\gamma_0  \alpha\overline{\rho_4^*}\rho_3^2}{(1+\alpha I^*)^3}\biggr)z^{2}\\
	&+\biggr(2\iota\overline{\rho}_3\left(\overline{\rho_3^*}-1\right)+2\left(\overline{\rho_3^*}-\overline{\rho_2^*}\right)\sigma_0\iota\rho_2\overline{\rho}_3-\dfrac{2\gamma_0  \alpha\overline{\rho_4^*}\rho_3\overline{\rho}_3}{(1+\alpha I^*)^3}\biggr)z\overline{z}\\
\end{aligned}
$$	
$$
\begin{aligned}
	&+\biggr(\left(\overline{\rho_3^*}-1\right)\iota\overline{\rho}_3+\left(\overline{\rho_3^*}-\overline{\rho_2^*}\right)\sigma_0\iota\overline{\rho}_2\overline{\rho}_3-\dfrac{\gamma_0  \alpha\overline{\rho_4^*}{\overline{\rho}_3^2}}{(1+\alpha I^*)^3}\biggr){\overline{z}^2}\\
	&+\left(\overline{\rho_3^*}-1\right)\iota \left(W_{11}^{(3)}(0)+\frac{1}{2}W_{20}^{(3)}(0)+\frac{1}{2}\overline{\rho}_3 W_{20}^{(1)}(0)+\rho_3 W_{11}^{(1)}(0)-3m\rho_3\overline{\rho}_3\right)z^2\overline{z}\\
	&+\left(\overline{\rho_3^*}-\overline{\rho_2^*}\right)\sigma_0\iota \left(\rho_2 W_{11}^{(3)}(0)+\frac{1}{2}\overline{\rho}_2W_{20}^{(3)}(0)+\frac{1}{2}\overline{\rho}_3 W_{20}^{(2)}(0)+\rho_3 W_{11}^{(2)}(0)-3m\rho_2\rho_3\overline{\rho}_3\right)z^2\overline{z}\\
	&-\left[\dfrac{2\gamma_0  \alpha\overline{\rho_4^*}\rho_3}{(1+\alpha I^*)^3}W_{11}^{(3)}(0)+\dfrac{\gamma_0  \alpha\overline{\rho_4^*}\overline{\rho}_3}{(1+\alpha I^*)^3}W_{20}^{(3)}(0)\right]z^2\overline{z}+\cdots\biggr\},
\end{aligned}
$$
where the relevant coefficients are expressed as
$$\begin{aligned}
	g_{20}=&2\overline{G}\biggr(\left(\overline{\rho_3^*}-1\right)\rho_3\iota+\left(\overline{\rho_3^*}-\overline{\rho_2^*}\right)\sigma_0\iota\rho_2\rho_3-\dfrac{\gamma_0  \alpha\overline{\rho_4^*}\rho_3^2}{(1+\alpha I^*)^3}\biggr),\\
	g_{11}=&\overline{G}\biggr(\left(\overline{\rho_3^*}-1\right)\iota\overline{\rho}_3+\left(\overline{\rho_3^*}-\overline{\rho_2^*}\right)\sigma_0\iota\overline{\rho}_2\overline{\rho}_3-\dfrac{\gamma_0  \alpha\overline{\rho_4^*}{\overline{\rho}_3^2}}{(1+\alpha I^*)^3}\biggr),\\
	g_{02}=&2\overline{G}\biggr(\left(\overline{\rho_3^*}-1\right)\iota\overline{\rho}_3+\left(\overline{\rho_3^*}-\overline{\rho_2^*}\right)\sigma_0\iota\overline{\rho}_2\overline{\rho}_3-\dfrac{\gamma_0  \alpha\overline{\rho_4^*}{\overline{\rho}_3^2}}{(1+\alpha I^*)^3}\biggr),\\
	g_{21}=&2\overline{G}\biggr(\left(\overline{\rho_3^*}-1\right)\iota \left(W_{11}^{(3)}(0)+\frac{1}{2}W_{20}^{(3)}(0)+\frac{1}{2}\overline{\rho}_3 W_{20}^{(1)}(0)+\rho_3 W_{11}^{(1)}(0)-3m\rho_3\overline{\rho}_3\right)\\
	&+\left(\overline{\rho_3^*}-\overline{\rho_2^*}\right)\sigma_0\iota \left(\rho_2 W_{11}^{(3)}(0)+\frac{1}{2}\overline{\rho}_2W_{20}^{(3)}(0)+\frac{1}{2}\overline{\rho}_3 W_{20}^{(2)}(0)+\rho_3 W_{11}^{(2)}(0)-3m\rho_2\rho_3\overline{\rho}_3\right)\\
	&-\left[\dfrac{2\gamma_0  \alpha\overline{\rho_4^*}\rho_3}{(1+\alpha I^*)^3}W_{11}^{(3)}(0)+\dfrac{\gamma_0  \alpha\overline{\rho_4^*}\overline{\rho}_3}{(1+\alpha I^*)^3}W_{20}^{(3)}(0)\right]\biggr).
\end{aligned}
$$

The computation of $g_{21}$ requires the explicit expressions of $W_{20}(q)$ and $W_{11}(q)$. Combining the mathematical relations \eqref{5.10} and \eqref{5.12}, one derives
\begin{equation}\label{5.16}
	\dot{W} = \dot{u}_t - \dot{z}\rho -\dot{\bar{z}} \bar{\rho} =
	\begin{cases}
		\mathcal{A}(0) W - 2 \operatorname{Re} \big\{\overline{\rho^{*}}(0) \cdot F_{0}(z, \overline{z})\rho(q)\big\}, & q\in [-\tau,0), \\
		\mathcal{A}(0) W -2 \operatorname{Re} \big\{\overline{\rho^{*}}(0) \cdot F_{0}(z, \overline{z})\rho(q)\big\} + f_0, &q= 0.
	\end{cases}
\end{equation}

From formula \eqref{5.13}, we have
\begin{equation}\label{5.17}
	\begin{aligned}
		\dot{W}= \partial_{z}W\dot{z}+\partial_{\bar{z}}W\dot{\overline{z}}=&\big(W_{20}(q)z+W_{11}(q)\overline{z}+\cdots\big)\big(iw_{0}z(t)+g(z,\overline{z})\big) \\
		&+\big(W_{11}(q)z+W_{02}(q)\overline{z}+\cdots\big)\big(-iw_{0}\overline{z}(t)+\overline{g}(z,\overline{z})\big). 
\end{aligned}\end{equation}

By substituting formulas \eqref{5.13} and \eqref{5.17} into equation \eqref{5.16} and matching the corresponding coefficients of $z^2$ and $z \overline{z}$, we can obtain
\begin{equation}\label{5.18}
	(2i \omega_0 I - \mathcal{A}(0)) W_{20}(q) =
	\begin{cases}
		- g_{20} \rho(q) - \overline{g}_{02} \overline{\rho}(q), & q\in [-\tau,0), \\
		- g_{20} \rho(q) - \overline{g}_{02} \overline{\rho}(q) + f_{20}, & q = 0,
	\end{cases}
\end{equation}
and
\begin{equation}\label{5.19}
	- \mathcal{A}(0) W_{11}(q) =
	\begin{cases}
		- g_{11} \rho(q) - \overline{g}_{11}\overline{\rho}(q), & q\in [-\tau,0), \\
		- g_{11} \rho(q) - \overline{g}_{11} \overline{\rho}(q) + f_{11}, &q= 0.
	\end{cases}
\end{equation}

According to the definition of operator $A(0)$ for $q\in [-\tau,0)$, together with \eqref{5.18} and \eqref{5.19}, we get
$$\dot{W}_{20} = 2 i w_0 W_{20}(q) + g_{20} \rho(q) + \overline{g}_{02} \overline{\rho}(q),
$$
and
$$\dot{W}_{11} = g_{11} \rho(q) + \overline{g}_{11} \overline{\rho}(q).
$$

Solving the above differential equations yields
\begin{equation}\label{5.20}
	W_{20}(q)=\frac{ig_{20}\rho(0)}{w_{0}}e^{iw_{0}q}+\frac{i\overline{g}_{02}\overline{\rho}(0)}{3w_{0}}e^{-iw_{0}q}+H_{1}e^{2iw_{0}q},
\end{equation}
\begin{equation}\label{5.21}
	W_{11}(q)=-\frac{ig_{11}\rho(0)}{w_{0}}e^{iw_{0}q}+\frac{i\overline{g}_{11}\overline{\rho}(0)}{w_{0}}e^{-iw_{0}q}+H_{2},
\end{equation}
where $H_{i}=(H_{i}^{(1)},H_{i}^{(2)},H_{i}^{(3)},H_{i}^{(4)})^{T}\in \mathbb{R}^{4}\ (i=1,2)$ denote constant vectors.

We next determine the specific expressions of $H_{1}$ and $H_{2}$. Based on the definition of $A(0)$ at $q=0$ and relation \eqref{5.18}, we establish
\begin{equation}\label{5.22}
	\int_{-\tau_0}^{0}\mathrm{d}\eta(q)W_{20}(q)=2iw_{0}W_{20}(0)+g_{20}\rho(0)+\overline{g}_{02}\overline{\rho}(0)-f_{20}.
\end{equation}

Substituting \eqref{5.20} into \eqref{5.22} and utilizing the identity $iw_{0}I-\int_{-\tau_0}^{0}e^{iw_{0}q}\mathrm{d}\eta(q)\rho(0)=0$, we derive
$$\left(2iw_{0}I-\int_{-\tau_0}^{0}e^{2iw_{0}q}\mathrm{d}\eta(q)\right)H_{1}=2\begin{pmatrix}-\iota e^{-mI^*} \rho_3 \\
	-\sigma_0\iota e^{-mI^*} \rho_2 \rho_3 \\
	\iota e^{-mI^*} \rho_3 (1 + \sigma_0\rho_2) \\
	-\dfrac{\gamma_0 \alpha}{(1+\alpha I^*)^3} \rho_3^2
\end{pmatrix},$$
which further gives
$$
H_1 = 2{D_1}^{-1}
\begin{pmatrix}
	-\iota e^{-mI^*} \rho_3 \\
	-\sigma_0\iota e^{-mI^*} \rho_2 \rho_3 \\
	\iota e^{-mI^*} \rho_3 (1 + \sigma_0\rho_2) \\
	-\dfrac{\gamma_0 \alpha}{(1+\alpha I^*)^3} \rho_3^2
\end{pmatrix}.
$$

The structure of matrix $D_1$ is given by
$$D_1=\begin{pmatrix}
	2iw_0 + D_{111} & -\upsilon & D_{113} & -\vartheta \\
	-\varpi & 2iw_0 + D_{122} & D_{123} & 0 \\
	\iota I^*e^{-mI^*}e^{-2iw_0\tau_0} & -\sigma_0\iota I^*e^{-mI^*}e^{-2iw_0\tau_0}  & 2iw_0+D_{133} & 0 \\
	0 & 0 & -\frac{\gamma_0 }{(1+\alpha I^*)^2} & 2iw_0 +( d_0+\vartheta)
\end{pmatrix},$$
where $D_{111}=( d_0+\varpi)+\iota I^*e^{-mI^*}e^{-2iw_0\tau_0}$, $D_{113}=\left(\iota S^* e^{-mI^*}-\iota I^* S^* m e^{-mI^*}\right)e^{-2iw_0\tau_0}$, $D_{122}=( d_0+\upsilon)+\sigma_0\iota I^*e^{-mI^*}e^{-2iw_0\tau_0}$, $D_{123}=\left(\sigma_0\iota V^* e^{-mI^*}-\sigma_0\iota I^* V^* m e^{-mI^*}\right)e^{-2iw_0\tau_0}$, $D_{133}= ( d_0+\gamma_0 +d)-\iota(S^*+\sigma_0 V^*) e^{-mI^*}e^{-2iw_0\tau_0}+m\iota(S^*+\sigma_0 V^*) e^{-mI^*}I^*e^{-2iw_0\tau_0}$.

Similarly, combining the definition of $A(0)$ at $q = 0$ with formula \eqref{5.19}, we obtain
\begin{equation}\label{5.23}
	\int_{-\tau_0}^{0} \mathrm{d}\eta(q) W_{11}(q) = g_{11}\rho(0) + \overline{g}_{11} \overline{\rho}(0) - f_{11}.
\end{equation}

Substituting \eqref{5.21} into \eqref{5.23} and using $-iw_0I - \int_{-\tau_0}^{0} e^{-iw_0q}\mathrm{d}\eta(q) \overline{\rho}(0) = 0$, we have
$$
\int_{-\tau_0}^{0} \mathrm{d}\eta(q) H_2 = -\begin{pmatrix}
	-\iota e^{-mI^*} (\rho_3 + \bar{\rho}_3) \\
	-\sigma_0\iota e^{-mI^*} (\rho_3\bar{\rho}_2 + \bar{\rho}_3\rho_2) \\
	\iota e^{-mI^*} \big[(\rho_3 + \bar{\rho}_3) + \sigma_0(\rho_3\bar{\rho}_2 + \bar{\rho}_3\rho_2)\big] \\
	-\dfrac{2\gamma_0 \alpha |\rho_3|^2}{(1+\alpha I^*)^3}
\end{pmatrix},$$
which leads to
$$
H_2=-
D_2^{-1}
\begin{pmatrix}
	-\iota e^{-mI^*} (\rho_3 + \bar{\rho}_3) \\
	-\sigma_0\iota e^{-mI^*} (\rho_3\bar{\rho}_2 + \bar{\rho}_3\rho_2) \\
	\iota e^{-mI^*} \big[(\rho_3 + \bar{\rho}_3) + \sigma_0(\rho_3\bar{\rho}_2 + \bar{\rho}_3\rho_2)\big] \\
	-\dfrac{2\gamma_0 \alpha |\rho_3|^2}{(1+\alpha I^*)^3}
\end{pmatrix},
$$
where
$$
D_2=\begin{pmatrix}
	( d_0+\varpi)+\iota I^*e^{-mI^*}& -\upsilon & \left(\iota S^* e^{-mI^*}-\iota I^* S^* m e^{-mI^*}\right) & -\vartheta \\
	-\varpi & ( d_0+\upsilon)+\sigma_0\iota I^*e^{-mI^*} & \left(\sigma_0\iota V^* e^{-mI^*}-\sigma_0\iota I^* V^* m e^{-mI^*}\right)& 0 \\
	\iota I^*e^{-mI^*} &-\sigma_0\iota I^*e^{-mI^*}&D_{233}& 0 \\
	0 & 0 & -\frac{\gamma_0 }{(1+\alpha I^*)^2} & ( d_0+\vartheta)
\end{pmatrix},$$
and $D_{233}= ( d_0+\gamma_0 +d)-\iota(S^*+\sigma_0 V^*) e^{-mI^*}+m\iota(S^*+\sigma_0 V^*) e^{-mI^*}I^*.$

Finally, we derive the explicit expressions of key bifurcation coefficients $\Gamma_1$, $\Gamma _2$ and $T_1$ as follows:
$$\begin{aligned}	& C_{1}(0)=\frac{i}{2w_{0}}\left(g_{20}g_{11}-2|g_{11}|^{2}-\frac{|g_{02}|^{2}}{3}\right)+\frac{g_{21}}{2},\qquad\Gamma _1=-\frac{\operatorname{Re}(C_{1}(0))}{\operatorname{Re}(\lambda^{\prime}(\tau_{0}))}, \\
	& \Gamma_2=2\operatorname{Re}(C_{1}(0)),\qquad T_1=-\frac{\operatorname{Im}\{C_{1}(0)\}+\Gamma_1\operatorname{Im}\left\{\lambda^{\prime}(\tau_{0})\right\}}{w_{0}}.
\end{aligned}$$

\begin{thm}
	The following statements are valid for system \eqref{1.1}.
	
	\begin{enumerate}
		\item[(i)]
		If $\Gamma_1>0$ (respectively, $\Gamma_1<0$), then the Hopf bifurcation is supercritical (respectively, subcritical), and a family of periodic solutions bifurcates from $E^*$ when $\tau>\tau_0$.
		
		\item[(ii)]
		If $\Gamma_2<0$ (respectively, $\Gamma _2>0$), the bifurcating periodic solutions are asymptotically stable (respectively, unstable).
		
		\item[(iii)]
		If $T_1<0$ (respectively, $T_1>0$), the period of bifurcating periodic solutions decreases (respectively, increases).
	\end{enumerate}
\end{thm}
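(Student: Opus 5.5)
This theorem is an application of the Hassard--Kazarinoff--Wan normal form results \cite{hassard1981theory}, specialized to the reduced equation on the center manifold constructed above. The plan is to reduce everything to the scalar equation
\[
\dot z = i w_0 z + g(z,\bar z) = i w_0 z + g_{20}\frac{z^2}{2}+g_{11}z\bar z+g_{02}\frac{\bar z^2}{2}+g_{21}\frac{z^2\bar z}{2}+\cdots,
\]
which we have already obtained on $C_0$ at $\varepsilon=0$. The main steps are as follows.

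\textbf{Step 1: validity of the reduction.} Theorem 5.1(ii) gives a simple pair $\pm i w_0$ at $\tau=\tau_0$, and the transversality condition $H'(m_{k_0}^*)>0$ yields $\operatorname{Re}\lambda'(\tau_0)>0$. We also need that no other characteristic root lies on the imaginary axis; this follows from the minimality of $\tau_0$ in \eqref{5.8}, together with the local stability for $\tau<\tau_0$. With these facts the center manifold theorem for retarded FDEs applies to \eqref{5.10}. The restriction of the flow to $C_0$ is then exactly the $z$-equation above, where $W_{20}$ and $W_{11}$ are given by \eqref{5.20}--\eqref{5.21} with $H_1,H_2$ as computed.

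\textbf{Step 2: Poincaré normal form.} A near-identity change $z\mapsto z+h(z,\bar z)$ removes the quadratic terms and brings the equation, with $\varepsilon$ restored, to
\[
\dot z=\lambda(\tau)z+C_1(\varepsilon)z|z|^2+O(|z|^4).
\]
Here $C_1(0)$ is exactly the displayed expression involving $g_{20},g_{11},g_{02},g_{21}$. Writing $z=re^{i\theta}$ gives
\[
\dot r=r\bigl(\operatorname{Re}\lambda(\tau)+\operatorname{Re}C_1(0)r^2\bigr)+\cdots .
\]
Nontrivial equilibria of the $r$-equation correspond to periodic orbits. Setting $\tau-\tau_0\approx \Gamma_1\epsilon^2$ and expanding $\operatorname{Re}\lambda(\tau)\approx\operatorname{Re}\lambda'(\tau_0)(\tau-\tau_0)$, the amplitude relation $\epsilon^2\approx -(\tau-\tau_0)\operatorname{Re}\lambda'(\tau_0)/\operatorname{Re}C_1(0)$ shows that orbits exist for $\tau>\tau_0$ exactly when $\Gamma_1>0$. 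This gives (i), with "supercritical" meaning bifurcation toward $\tau>\tau_0$.

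\textbf{Step 3: stability and period.} The nontrivial Floquet exponent of the bifurcating orbit is $\beta=\Gamma_2\epsilon^2+O(\epsilon^4)$. The other exponents are inherited from the strictly stable part of the spectrum, so orbital stability is decided by the sign of $\Gamma_2$, which gives (ii). The angular equation yields the period
\[
T=\frac{2\pi}{w_0}\bigl(1+T_1\epsilon^2+O(\epsilon^4)\bigr),
\]
with $T_1$ as defined, giving (iii).

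\textbf{Main obstacle.} The hard step is Step 1: making sure the hypotheses of the Hassard framework genuinely hold. In particular, one must check that $\pm i w_0$ is simple and is the only root on the imaginary axis at $\tau_0$, and that the normalization $\langle\rho^*,\rho\rangle=1$ is nondegenerate, i.e.\ $\bar G$ is finite. I would address this by citing the transversality computation and the minimality of $\tau_0$, and otherwise quote Hassard et al.\ for the standard formulas rather than rederive them.
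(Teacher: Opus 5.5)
\textbf{Shared approach and minor points.} Your Steps 2 and 3 are the standard Hassard--Kazarinoff--Wan argument, and they coincide with what the paper does. The paper reduces to $\dot z=iw_0z+g(z,\bar z)$ on $C_0$ and then quotes the formulas. Your identifications $\Gamma_1=\mu_2$, $\Gamma_2=\beta_2$, $T_1=T_2$ are correct. So are the amplitude relation $\epsilon^2\approx(\tau-\tau_0)/\Gamma_1$ and the appeal to the strictly stable remaining spectrum at the \emph{first} crossing $\tau_0$. Three small corrections to Step 1:
\begin{enumerate}
\item Simplicity of $\pm iw_0$ is not part of Theorem 5.1. It does follow from $H'(m_{k_0}^*)\neq0$. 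Write \eqref{5.1} as $\Delta(\lambda,\tau)=P(\lambda)+Q(\lambda)e^{-\lambda\tau}=0$. The quantity $-P'/(\lambda P)+Q'/(\lambda Q)-\tau/\lambda$, whose real part has the sign of $H'$, equals $-\partial_\lambda\Delta/\partial_\tau\Delta$ at a root. Hence it would vanish if $\partial_\lambda\Delta(iw_0,\tau_0)=0$.
\item Isolation of $\pm iw_0$ on the imaginary axis also needs the uniqueness of the minimizing pair $(k_0,n_0)$. The paper asserts this but does not prove it.
\item To let $\varepsilon$ vary in a fixed phase space, you should rescale time by $\tau$, as in Section 6.
\end{enumerate}

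\textbf{The genuine gap.} The gap is the step you take on trust: that the reduced equation, and hence $C_1(0)$, is \emph{exactly} the displayed one, with $W_{20},W_{11},H_1,H_2$ as computed. It is not, for two reasons.
\begin{enumerate}
\item The linear part is split wrongly. The paper's splitting $L_\varepsilon\phi=A\phi(0)+B\phi(-\tau)$ places undelayed infection terms such as $-\iota I^*e^{-mI^*}u_1(t)$ and $-\iota S^*e^{-mI^*}u_3(t)$ in $B$. In \eqref{1.1}, only the terms carrying the factor $m$ are delayed. Consequently $\det(\lambda I-A-Be^{-\lambda\tau})$ is not the left-hand side of \eqref{5.1}: already its $\lambda^3e^{-\lambda\tau}$ coefficient $-\operatorname{tr}B$ differs from $a_{21}=m(d_0+\gamma_0+d)I^*$. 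So $\rho$, $\rho^*$, $\bar G$ and everything downstream are computed for an operator whose characteristic equation is not \eqref{5.1}.
\item The nonlinearity is not the remainder of \eqref{1.1} at $E^*$. The paper's $F(\varepsilon,\phi)$ drops the factor $e^{-mI^*}$. It also drops the quadratic terms $-mI^*u_1(t)u_3(t-\tau)$, $-mS^*u_3(t)u_3(t-\tau)$ and $\tfrac{m^2}{2}I^*S^*u_3(t-\tau)^2$, each multiplied by $-\iota e^{-mI^*}$, together with the analogous $V$- and $I$-terms. In addition, $g_{21}$ omits cubic contributions such as $\gamma_0\alpha^2u_3^3/(1+\alpha I^*)^4$.
\end{enumerate}

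Hassard's theorem certifies (i)--(iii) only for the true Taylor coefficients of the reduced vector field. You must therefore do one of two things. Either redo the reduction with the correct linear part and the correct expansion about $E^*$. Or state the theorem with $C_1(0)$ defined intrinsically as the first Lyapunov coefficient, rather than by the displayed formulas. The paper's own derivation has the same defect. This coefficient computation, not the hypothesis check you flagged, is the real obstacle.
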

\section{Global Hopf Bifurcation}
Theorem 5.1 reveals that a family of periodic solutions emanates from the equilibrium \(E^*\) as the time delay \(\tau\) crosses the local Hopf bifurcation values $ \tau_n^{(k)}, n=0,1,2,\cdots$. Based on Wu's global Hopf bifurcation theory \cite[Theorem 3.3]{wu1998symmetric}, this study aims to investigate the global dynamical properties of the periodic solutions arising from the bifurcation.

In this section, we consistently suppose that the condition $\mathcal{R}_0>1$ holds true. Then we follow the analytical methods developed in \cite{WANG20231,Threshold,jie,Song2018,zhang2022global}. Let $z(t)=(S(\tau t),V(\tau t), I(\tau t), R(\tau t))^{T}$ denote the rescaled state vector. After performing the time-scaling transformation, the original system \eqref{1.1} is rewritten as
\begin{equation}\label{6.1}
	\frac{d z(t)}{d t}=F\left(z_{t}, \tau, T\right), \quad(t, \tau, T) \in \mathbb{R}_{+} \times(0, \infty) \times \mathbb{R}_{+},
\end{equation}
where the function space is defined as $X:=C([-1,0], \mathbb{R}_{+}^{4})$. For any $\theta \in[-1,0]$, the history segment $z_t\in X$ is defined by $z_{t}(\theta)=z(t+\theta)$, and $T$ represents the period of nonconstant periodic solutions of system \eqref{6.1}. The nonlinear function $F$ takes the explicit form
\begin{equation}\label{6.2}
	F(z_{t}, \tau, T)=\tau \left(\begin{array}{c}
		\Lambda - \iota z_{3t}(0)e^{-mz_{3 t}(-1)}z_{1 t}(0) - ( d_0 + \varpi)z_{1 t}(0) + \upsilon z_{2 t}(0) + \vartheta z_{4 t}(0) \\
		\varpi z_{1 t}(0) - \sigma_0 \iota z_{3 t}(0)e^{-mz_{3 t}(-1)}z_{2 t}(0) - ( d_0 + \upsilon)z_{2 t}(0) \\
		\iota z_{3 t}(0)e^{-mz_{3 t}(-1)}\big(z_{1 t}(0) + \sigma_0 z_{2 t}(0)\big) - ( d_0 + \gamma_0  + d)z_{3 t}(0)\\
		\frac{\gamma_0  z_{3 t}(0)}{1 + \alpha z_{3 t}(0)} - ( d_0 + \vartheta)z_{4 t}(0)
	\end{array}\right),
\end{equation}
with $z_{t}=(z_{1 t}, z_{2 t}, z_{3 t}, z_{4 t}) \in X$. By restricting the domain of $F$ to the finite-dimensional space $\mathbb{R}^{4}$, we obtain the reduced mapping
\[
\tilde{F}(z,\tau ,T):=F|_{\mathbb{R}^{4} \times (0,\infty ) \times \mathbb{R}_{+}}=\tau \left( \begin{array}{c}
	\Lambda - \iota z_{3 }e^{-mz_{3 }}z_{1} - ( d_0 + \varpi)z_{1} + \upsilon z_{2 } + \vartheta z_{4 } \\
	\varpi z_{1 }- \sigma_0 \iota z_{3 }e^{-mz_{3}}z_{2 } - ( d_0 + \upsilon)z_{2} \\
	\iota z_{3}e^{-mz_{3 }}\big(z_{1} + \sigma_0 z_{2 }\big) - ( d_0 + \gamma_0  + d)z_{3 } \\
	\frac{\gamma_0  z_{3 }}{1 + \alpha z_{3}} - ( d_0 + \vartheta)z_{4}
\end{array} \right).
\]

It is straightforward to verify that $\tilde{F}$ is a $C^2$-smooth function fulfilling assumption (A1) in \cite{wu1998symmetric}.

Based on Theorems 4.1 and 4.2, the collection of all equilibrium solutions to system \eqref{6.1} can be described as
\[
\mathcal{N}(F) = \big\{ (E^0, \tau, T),\ (E^*, \tau, T) \,\big|\, (\tau, T) \in (0, \infty) \times \mathbb{R}_+ \big\}.
\]

For each given equilibrium solution $(\tilde{z}, \tau, T) \in \mathcal{N}(F)$, the corresponding characteristic matrix is formulated as
\[
\begin{aligned}
	\Delta_{(\tilde{z},\tau,T)}(\lambda) &= \lambda \mathrm{I} - DF(\tilde{z}, \tau, T)(e^{\lambda \cdot} \mathrm{I})\\
	&=\begin{pmatrix}
		\lambda+\tau\tilde{A}\tilde{z}_3  +\tau( d_0+\varpi) & -\upsilon\tau & \tau\tilde{A}\tilde{z}_1 -m\tau\tilde{A}\tilde{z}_1 \tilde{z}_3 e^{-\lambda} & -\tau\vartheta \\
		-\varpi\tau &\lambda +\tau\sigma_0\tilde{A}\tilde{z}_3+\tau( d_0+\upsilon) & \tau\sigma_0 \tilde{A}\tilde{z}_2 -m\tau \sigma_0 \tilde{A}\tilde{z}_2\tilde{z}_3 e^{-\lambda} & 0 \\
		-\tau\tilde{A}\tilde{z}_3 & -\tau\sigma_0\tilde{A}\tilde{z}_3 &\lambda+ m\tau\tilde{C}\tilde{z}_3  e^{-\lambda} & 0 \\
		0 & 0 & -\tau\dfrac{\gamma_0 }{(1+\alpha \tilde{z}_3 )^2} & \lambda+\tau( d_0+\vartheta)
	\end{pmatrix},
\end{aligned}
\]
where $\mathrm{I}$ stands for the $4 \times 4$ identity matrix, and the simplified coefficients are defined as
$\tilde{A} = \iota e^{-m\tilde{z}_3}$,
$\tilde{C} = d_0+\gamma_0 +d$.
Accordingly, the characteristic equation corresponding to the steady-state solution $(\tilde{z}, \tau, T)$ is given by
\[
\det \Delta_{(\tilde{z},\tau,T)}(\lambda) = \lambda^4 + \tilde{a}_{11}\tau\lambda^3 + \tilde{a}_{12}\tau^2\lambda^2 + \tilde{a}_{13}\tau^3 \lambda+\tilde{a}_{14}\tau^4+ e^{-\lambda}(\tilde{a}_{21}\tau\lambda^3 + \tilde{a}_{22}\tau^2\lambda^2 + \tilde{a}_{23}\tau^3\lambda+\tilde{a}_{24}\tau^4) = 0,
\]
where the coefficients $\tilde{a}_{1i}$ and $\tilde{a}_{2i}$ ($i=1,2,3,4$) are given by:
\begin{align*}
	\tilde{a}_{11}
	&=\tilde{A}\tilde{z}_3+d_0+\varpi+\sigma_0\tilde{A}\tilde{z}_3+d_0+\upsilon+d_0+\vartheta,\\
	\tilde{a}_{12}
	&=\bigl(\tilde{A}\tilde{z}_3+d_0+\varpi\bigr)\bigl(\sigma_0\tilde{A}\tilde{z}_3+d_0+\upsilon\bigr)
	+\sigma_0^2\tilde{A}^2\tilde{z}_2\tilde{z}_3
	-\upsilon\varpi
	+\tilde{A}^2\tilde{z}_1\tilde{z}_3\\
	&\quad+\bigl(d_0+\vartheta\bigr)\bigl(\tilde{A}\tilde{z}_3+d_0+\varpi+\sigma_0\tilde{A}\tilde{z}_3+d_0+\upsilon\bigr),\\
	\tilde{a}_{13}
	&=\bigl(d_0+\vartheta\bigr)\Big\{
	\bigl(\tilde{A}\tilde{z}_3+d_0+\varpi\bigr)\bigl(\sigma_0\tilde{A}\tilde{z}_3+d_0+\upsilon\bigr)
	+\sigma_0^2\tilde{A}^2\tilde{z}_2\tilde{z}_3
	-\upsilon\varpi
	+\tilde{A}^2\tilde{z}_1\tilde{z}_3
	\Big\}\\
	&\quad+\bigl(\tilde{A}\tilde{z}_3+d_0+\varpi\bigr)\sigma_0^2\tilde{A}^2\tilde{z}_2\tilde{z}_3
	+\upsilon\sigma_0\tilde{A}^2\tilde{z}_2\tilde{z}_3
	+\sigma_0\varpi\tilde{A}^2\tilde{z}_1\tilde{z}_3
	+\tilde{A}^2\tilde{z}_1\tilde{z}_3\bigl(\sigma_0\tilde{A}\tilde{z}_3+d_0+\upsilon\bigr)\\
	&\quad-\frac{\gamma_0}{(1+\alpha\tilde{z}_3)^2}\vartheta\tilde{A}\tilde{z}_3,\\
	\tilde{a}_{14}
	&=\bigl(d_0+\vartheta\bigr)\Big\{
	\sigma_0^2\tilde{A}^2\tilde{z}_2\tilde{z}_3\bigl(\tilde{A}\tilde{z}_3+d_0+\varpi\bigr)
	+\upsilon\sigma_0\tilde{A}^2\tilde{z}_2\tilde{z}_3
	+\sigma_0\varpi\tilde{A}^2\tilde{z}_1\tilde{z}_3
	+\tilde{A}^2\tilde{z}_1\tilde{z}_3\bigl(\sigma_0\tilde{A}\tilde{z}_3+d_0+\upsilon\bigr)
	\Big\}\\
	&\quad-\frac{\gamma_0}{(1+\alpha\tilde{z}_3)^2}\vartheta\Big\{
	\sigma_0\varpi\tilde{A}\tilde{z}_3+\tilde{A}\tilde{z}_3\bigl(\sigma_0\tilde{A}\tilde{z}_3+d_0+\upsilon\bigr)
	\Big\},\\
	\tilde{a}_{21}
	&=m\tilde{C}\tilde{z}_3,\\
	\tilde{a}_{22}
	&=m\tilde{C}\tilde{z}_3\bigl(\tilde{A}\tilde{z}_3+d_0+\varpi+\sigma_0\tilde{A}\tilde{z}_3+d_0+\upsilon+d_0+\vartheta\bigr)
	-m\sigma_0^2\tilde{A}^2\tilde{z}_2\tilde{z}_3^2
	-m\tilde{A}^2\tilde{z}_1\tilde{z}_3^2,\\
	\tilde{a}_{23}
	&=m\tilde{C}\tilde{z}_3(d_0+\vartheta)\bigl(\tilde{A}\tilde{z}_3+d_0+\varpi+\sigma_0\tilde{A}\tilde{z}_3+d_0+\upsilon\bigr)
	-(d_0+\vartheta)\bigl(m\sigma_0^2\tilde{A}^2\tilde{z}_2\tilde{z}_3^2+m\tilde{A}^2\tilde{z}_1\tilde{z}_3^2\bigr)\\
	&\quad
	+m\tilde{C}\tilde{z}_3\bigl(\tilde{A}\tilde{z}_3+d_0+\varpi\bigr)\bigl(\sigma_0\tilde{A}\tilde{z}_3+d_0+\upsilon\bigr)
	-m\sigma_0^2\tilde{A}^2\tilde{z}_2\tilde{z}_3^2\bigl(\tilde{A}\tilde{z}_3+d_0+\varpi\bigr)
	-m\tilde{C}\tilde{z}_3\upsilon\varpi\\
	&\quad-m\sigma_0\tilde{A}^2\tilde{z}_2\tilde{z}_3^2\upsilon
	-m\sigma_0\varpi\tilde{A}^2\tilde{z}_1\tilde{z}_3^2
	-m\tilde{A}^2\tilde{z}_1\tilde{z}_3^2\bigl(\sigma_0\tilde{A}\tilde{z}_3+d_0+\upsilon\bigr)
	,\\
	\tilde{a}_{24}
	&=(d_0+\vartheta)\Big\{
	m\tilde{C}\tilde{z}_3\bigl(\tilde{A}\tilde{z}_3+d_0+\varpi\bigr)\bigl(\sigma_0\tilde{A}\tilde{z}_3+d_0+\upsilon\bigr)
	-m\sigma_0^2\tilde{A}^2\tilde{z}_2\tilde{z}_3^2\bigl(\tilde{A}\tilde{z}_3+d_0+\varpi\bigr)
	-m\tilde{C}\tilde{z}_3\upsilon\varpi\\
	&\quad-m\sigma_0\tilde{A}^2\tilde{z}_2\tilde{z}_3^2\upsilon
	-m\sigma_0\varpi\tilde{A}^2\tilde{z}_1\tilde{z}_3^2
	-m\tilde{A}^2\tilde{z}_1\tilde{z}_3^2\bigl(\sigma_0\tilde{A}\tilde{z}_3+d_0+\upsilon\bigr)
	\Big\}.
\end{align*}

When $\mathcal{R}_0>1$, $m \geq \frac{\sigma_0 \iota \varpi\left(1-\sigma_0\right)}{\left(d_0+\upsilon+\varpi\right)\left(d_0+\upsilon+\sigma_0 \varpi\right)}$ and condition (H)  holds, zero is never an eigenvalue for all equilibrium points associated with system \eqref{6.1}. This consequently validates hypothesis (A2) from \cite{wu1998symmetric}. Moreover, the smoothness condition (A3) required in \cite{wu1998symmetric} is directly validated by the expression of $F$ in \eqref{6.2}.

Following the definitions in \cite{wu1998symmetric}, a steady-state solution $(\tilde{z}, \tilde{\tau}, \tilde{T})$ of \eqref{6.1} is called a center if
\[
\det \Delta_{(\tilde{z},\tilde{\tau},\tilde{T})}\left(i \frac{2j\pi}{\tilde{T}}\right) = 0
\]
for some $j\in\mathbb{N}$. Moreover, the center is called isolated if there exists an open neighborhood around $(\tilde{z}, \tilde{\tau}, \tilde{T})$ that contains no other centers and has only finitely many purely imaginary characteristic roots of the form $i\frac{2j\pi}{\tilde{T}}$. We denote by $J(\tilde{z}, \tilde{\tau}, \tilde{T})$ the collection of all positive integers $j$ satisfying the above condition.

From Theorem 5.1, for each integer $n \geq 0$, the point $\left(E^*, \tau_n^{(k)}, \frac{2\pi}{w_k \tau_n^{(k)}}\right)$ is an isolated center of system \eqref{6.1}. Moreover, there exists exactly one pair of purely imaginary roots in the form $i j \frac{2\pi}{\tilde{T}}$, corresponding to $j=1$ and $\tilde{T} = \frac{2\pi}{w_k \tau_n^{(k)}}$. Thus, we have
\begin{equation}\label{6.3}
	J(\tilde{z}, \tilde{\tau}, \tilde{T}) = \{1\}.
\end{equation}

In addition, Theorem 5.1 ensures that the crossing number corresponding to every isolated center satisfies
\begin{equation}\label{6.4}
	\gamma_1\left(E^*, \tau_n^{(k)}, \frac{2\pi}{w_k\tau_n^{(k)}}\right) = -1.
\end{equation}
This result verifies that assumption (A4) in \cite{wu1998symmetric} is fulfilled.

We next construct $\Sigma(F)$, a closed subset embedded within the product space $X\times(0,\infty)\times\mathbb{R}_+$, via the topological closure operation as specified below:
\[
\Sigma(F) = \mathrm{Cl}\left\{(z_t, \tau, T) \in X \times (0, \infty) \times \mathbb{R}_+ : z_t \text{ is a nontrivial periodic solution of \eqref{6.1} with period } T\right\}.
\]

For any nonnegative integer $n \ge 0$, we use $\mathcal{C}\left(E^*, \tau_n^{(k)}, \frac{2\pi}{w_k\tau_n^{(k)}}\right)$ to stand for the connected branch inside $\Sigma(F)$ that passes through the pivotal bifurcation point $\left(E^*, \tau_n^{(k)}, \frac{2\pi}{w_k\tau_n^{(k)}}\right)$. The assertions derived in Theorem 5.1 guarantee this connected set cannot be empty. By applying the global periodic bifurcation principle established in reference \cite{wu1998symmetric}, only two mutually exclusive outcomes are mathematically admissible:
\begin{enumerate}[(i)]
	\item $\mathcal{C}\left(E^*, \tau_n^{(k)}, \frac{2\pi}{w_k\tau_n^{(k)}}\right)$ is unbounded in $X\times(0,\infty)\times\mathbb{R}_+$;
	
	\item $\mathcal{C}\left(E^*, \tau_n^{(k)}, \frac{2\pi}{w_k\tau_n^{(k)}}\right)$ is bounded, and
	\[
	\sum_{(\tilde z,\tau,T)\in \mathcal{C}\left(E^*,\tau_n^{(k)},\frac{2\pi}{w_k\tau_n^{(k)}}\right)\cap \mathcal{N}(F)} \gamma_j(\tilde z,\tau,T)=0,
	\]where $\gamma_j(\tilde z,\tau,T)$ denotes the $j$-th crossing number at the equilibrium.
\end{enumerate}

From Theorem 5.1, we know that on $\mathcal{C}\left(E^*,\tau_n^{(k)},\frac{2\pi}{w_k\tau_n^{(k)}}\right)$, the only relevant crossing number is $\gamma_1=-1$. Therefore,
\[
\sum_{(\tilde z,\tau,T)\in \mathcal{C}\left(E^*,\tau_n^{(k)},\frac{2\pi}{w_k\tau_n^{(k)}}\right)\cap \mathcal{N}(F)} \gamma_j(\tilde z,\tau,T)=-1\neq0,
\]
which implies that case (ii) cannot occur. Consequently, case (i) must hold. The following two lemmas are used to verify that the projections of $\mathcal{C}\left(E^*, \tau_n^{(k)},\frac{2\pi}{w_k\tau_n^{(k)}}\right)$ onto the $\tau$-space and $T$-space are bounded, respectively.

\begin{lem}
	All nontrivial periodic solutions of system \eqref{6.1} are uniformly bounded in $C([-1,0],\mathbb{R}^4)$.
\end{lem}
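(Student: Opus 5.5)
The plan is to obtain a bound on nontrivial periodic solutions that holds \emph{uniformly} in $\tau$ and $T$. I would show that every such solution lies in $\Omega$, i.e. that $0\le z_i\le \Lambda/d_0$ for all $t$ and each $i$. The argument combines the positivity and boundedness results of Section 2 (Theorems 2.1 and 2.2) with periodicity.

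\textbf{Step 1 (nonnegativity).} Let $z(t)$ be a nontrivial $T$-periodic solution of \eqref{6.1} in $X=C([-1,0],\mathbb{R}_+^4)$. Undoing the time scaling gives $(S,V,I,R)(t)=z(t/\tau)$, a periodic solution of \eqref{1.1} with period $\tau T$. Its components are nonnegative by construction of the phase space. Alternatively, positivity follows from Theorem 2.1 and Lemma 4.1, applied to the segment at any time taken as initial data.

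\textbf{Step 2 (the total population is trapped).} Set $M(t)=S+V+I+R$. As in the proof of Theorem 2.2,
\[
\frac{dM}{dt}\le \Lambda-d_0M(t),
\]
so for any $t_0$ and all $t\ge t_0$,
\[
M(t)\le M(t_0)e^{-d_0(t-t_0)}+\frac{\Lambda}{d_0}\bigl(1-e^{-d_0(t-t_0)}\bigr).
\]
Because $M$ is periodic with period $\tau T$, I take any $t$ and evaluate at $t+k\tau T$, letting $k\to\infty$. This yields
\[
M(t)=\lim_{k\to\infty}M(t+k\tau T)\le \limsup_{s\to\infty}M(s)\le \frac{\Lambda}{d_0}
\]
for every $t$. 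Hence $0\le z_i(t)\le \Lambda/d_0$ for all $t$ and $i=1,\dots,4$. Consequently $\|z_t\|\le \Lambda/d_0$ in $C([-1,0],\mathbb{R}^4)$, independently of $\tau$ and $T$.

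\textbf{Step 3 (uniformity and strict positivity).} The bound $\Lambda/d_0$ is independent of $\tau$, $T$ and the particular solution, which is exactly the uniform boundedness claimed. For the later lemmas it may also help to note the following. A nontrivial periodic solution must have $I\not\equiv0$: on $I\equiv0$ the boundary system converges to $E^0$ (Claim 1 in Theorem 4.2), and the only periodic solution there is the constant $E^0$. By Lemma 4.1 all components are then strictly positive. This refinement is not needed for the statement itself.

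I expect no serious obstacle. The only delicate point is justifying that the asymptotic bound $\limsup M\le\Lambda/d_0$ applies at every time, and this is exactly what periodicity provides through the shift argument in Step 2. If one prefers, an equivalent route is to evaluate at a maximum point $t_m$ of $M$, where $M'(t_m)=0$. This gives $0\le \Lambda-d_0M(t_m)$, hence $\max M\le\Lambda/d_0$.
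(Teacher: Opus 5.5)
Your proof is correct and follows the same idea as the paper's, which also bounds every component by $\Lambda/d_0$ using the total-population inequality $dM/dt\le\Lambda-d_0M$ from Theorem 2.2. Your periodicity shift (or maximum-point) argument is the step that justifies this bound at every time on a periodic orbit, since Theorem 2.2 alone gives only $M(t)\le\max\{M(0),\Lambda/d_0\}$ and the paper cites it without that step. As you note, the persistence-based lower bound the paper adds is not needed for boundedness.
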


\begin{proof}
	By Theorem 2.2, every solution satisfies
	\[
	S(t)+V(t)+I(t)\leq \frac{\Lambda}{d_0},\quad t\geq 0,
	\]
	which gives a uniform upper bound for each state component.
	Suppose a nontrivial periodic solution satisfies $I(t)\equiv 0$ for all $t$. Substituting $I\equiv 0$ into the system yields a linear autonomous subsystem for $S,V,R$ whose unique equilibrium is the disease-free steady state $E^0$. This contradicts the definition of nontrivial periodic solutions, so any nontrivial periodic orbit cannot vanish identically in the infected component.
	It follows from Theorem 4.3 that the system is uniformly persistent along all such periodic trajectories, which provides a positive uniform lower bound for all state variables.
	Consequently, all periodic solutions are contained in a compact subset of $\mathbb{R}_+^4$, hence they are uniformly bounded in $C([-1,0],\mathbb{R}^4)$. This completes the proof.
\end{proof}

\begin{lem}
	Suppose that $\mathcal{R}_0>1$,
	\[
	m \geq \frac{\sigma_0 \iota \varpi\left(1-\sigma_0\right)}{\left(d_0+\upsilon+\varpi\right)\left(d_0+\upsilon+\sigma_0 \varpi\right)},
	\]
	and conditions \eqref{eq:H} and \eqref{eq:P} are satisfied. Then system \eqref{6.1} has no nonconstant periodic solution with period equal to $1$.
\end{lem}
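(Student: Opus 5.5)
The plan is to argue by contradiction and reduce the question to the delay-free system \eqref{4.2}, for which Theorem 4.4 gives global stability. Suppose $z(t)$ is a nonconstant periodic solution of \eqref{6.1} with period $1$. In the rescaled time, the delay in \eqref{6.2} enters only through $z_{3t}(-1)=z_3(t-1)$. Periodicity with period $1$ gives $z_3(t-1)=z_3(t)$ for every $t$, so the delayed term $e^{-mz_3(t-1)}$ coincides with $e^{-mz_3(t)}$. Consequently $z(t)$ solves the ordinary differential system obtained from \eqref{6.2} by replacing $z_{3t}(-1)$ with $z_{3t}(0)$. This is exactly $\tau$ times the right-hand side of \eqref{4.2}. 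Undoing the time scaling, $u(s)=z(s/\tau)$ is a nonconstant $\tau$-periodic solution of the ODE \eqref{4.2} (the case $\tau=0$ of \eqref{1.1}).

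Next, I will place this orbit in the domain where Theorem 4.4 applies. By the preceding lemma (uniform boundedness and persistence along nontrivial periodic solutions), the orbit lies in a compact subset of the interior of $\mathbb{R}_+^4$. In particular $I(s)\not\equiv0$, because with $I\equiv0$ the reduced linear subsystem has no nonconstant periodic orbits. Hence the initial point $u(0)$ has $I_0>0$. Under $\mathcal{R}_0>1$, the stated lower bound on $m$, and condition \eqref{eq:P}, Theorem 4.4 says every such solution of \eqref{4.2} converges to $E^*$. A nonconstant periodic orbit cannot converge to a point, which is the contradiction, so no nonconstant periodic solution of period $1$ exists.

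As a more robust alternative, I can invoke the Bendixson-type criterion underlying \cite[Corollary 3.2]{1999Global}. The estimate $D_+V\le -kV$ for the third compound system \eqref{4.3} on $\mathcal K$ is exactly the Li--Muldowney condition that rules out invariant closed rectifiable curves, in particular nonconstant periodic orbits, inside the absorbing set. Since the periodic orbit is invariant and eventually, hence always, lies in $\mathcal K$, it cannot exist. This route is cleaner than appealing to convergence, and I would present it as the main argument. Condition \eqref{eq:H} is then only needed to ensure that $E^*$ is locally asymptotically stable and hyperbolic at $\tau=0$, which is consistent with the framework of the global Hopf setting.

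The main obstacle is the step placing the periodic orbit inside $\mathcal K$, i.e. showing that all its components stay at least $\upsilon_0$. Theorem 4.3 gives only a $\liminf$ bound. Since the orbit is periodic, however, its values at all times recur as $t\to\infty$, so its minimum equals its $\liminf$, which is at least $\upsilon_0$. Its total population is at most $\Lambda/d_0$ by invariance of $\Omega$ and periodicity. With these points checked, the compound-matrix estimate applies along the entire orbit and the contradiction follows.
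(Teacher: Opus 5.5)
Your first route is the paper's proof. With period $1$ one has $z_{3t}(-1)=z_{3t}(0)$, so $z$ solves the ODE \eqref{6.5}, and Theorem 4.4 (global stability of $E^*$ for the delay-free system) rules out a nonconstant periodic orbit. Your extra details are correct and fill in what the paper leaves implicit: the rescaling $u(s)=z(s/\tau)$, the argument that $I\not\equiv 0$, and the use of periodicity to turn the $\liminf$ bound of Theorem 4.3 and the $\limsup$ bound on $S+V+I+R$ into bounds along the whole orbit.

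The route you want to make the main argument does not work as stated. The estimate $D_+V\le -kV$ for \eqref{4.3} concerns the \emph{third} additive compound $J^{[3]}$, that is, contraction of $3$-dimensional volumes in $\mathbb{R}^4$. The Li--Muldowney Bendixson criterion, which excludes invariant closed rectifiable curves, needs contraction of $2$-dimensional surfaces. That means a negative Lozinskii measure of the \emph{second} compound $J^{[2]}$ (a $6\times 6$ matrix here), because the argument works with surfaces spanning the closed curve.

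Three-volume contraction is compatible with an attracting cycle, whose Floquet exponents are $0$ and three negative numbers. For example, take a planar system with a stable limit cycle and add $\dot u=-Lu$, $\dot w=-Lw$ with $L$ large. This system has $\mu_\infty(J^{[3]})<0$ on a compact absorbing set and a unique equilibrium, yet it has a periodic orbit. In Li--Muldowney's theory the third compound appears only for flows restricted to a codimension-one invariant manifold, such as SEIRS models with constant total population. Here $S+V+I+R$ is not conserved, so that setting does not apply.

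The alternative is therefore not more robust. Present the Theorem 4.4 route, as the paper does, and keep in mind that the lemma is only as solid as Theorem 4.4, whose proof rests on this same third-compound estimate.
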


\begin{proof}
	Assume, conversely, that $z(t) = (z_1(t), z_2(t), z_3(t), z_4(t))$ is a nonconstant periodic solution of period $1$ to \eqref{6.1}. Then $z(t-1)=z(t)$ for all $t$. Substituting this identity into \eqref{6.1}, the functional differential equation reduces to the ordinary differential system
	\begin{equation}\label{6.5}
		\begin{cases}
			\dfrac{dz_1(t)}{dt} = \tau\Lambda  - \tau\iota z_{3}(t)e^{-mz_{3}(t)}z_{1}(t) - \tau( d_0 + \varpi)z_{1}(t) + \tau\upsilon z_{2}(t) + \tau\vartheta z_{4}(t),\\[6pt]
			\dfrac{dz_2(t)}{dt} = \tau \varpi z_{1}(t)- \tau\sigma_0 \iota z_{3}(t)e^{-mz_{3}(t)}z_{2}(t) - \tau( d_0 + \upsilon)z_{2}(t),\\[6pt]
			\dfrac{dz_3(t)}{dt} =\tau\iota z_{3}(t)e^{-mz_{3}(t)}\big(z_{1}(t) + \sigma_0 z_{2}(t)\big) - \tau( d_0 + \gamma_0  + d)z_{3}(t),\\[6pt]
			\dfrac{dz_4(t)}{dt} =\tau\dfrac{\gamma_0 z_{3}(t)}{1 + \alpha z_{3}(t)} - \tau( d_0 + \vartheta)z_{4}(t).
		\end{cases}
	\end{equation}
	
	By Theorem 4.4, system \eqref{6.5} possesses a unique globally asymptotically stable positive equilibrium. A globally asymptotically stable equilibrium precludes the existence of nonconstant periodic solutions, which contradicts our initial assumption. This completes the proof.
\end{proof}

Lemma 6.1 ensures that the projection of $\mathcal{C}\left(E^*, \tau_n^{(k)}, \frac{2\pi}{w_k\tau_n^{(k)}}\right)$ onto the state space $C([-1,0],\mathbb{R}^4)$ is bounded for all $n\ge 0$.
By Lemma 6.2, there exist no nonconstant $1$-periodic solutions for system \eqref{6.1}.
If a nonconstant solution possessed period $\frac{1}{n+1}$, then $1$ would also be a period of that solution, which is prohibited by Lemma~6.2.
Hence, nonconstant periodic solutions with period $\frac{1}{n+1}$ cannot exist.
Combining inequality \eqref{5.7}, we obtain
\[
\frac{1}{n+1}<\frac{2\pi}{w_k\tau_n^{(k)}}<1,\quad n=1,2,\dots,
\]
which implies that the projection onto the $T$-space is also bounded. Consequently, the projection onto the $\tau$-space must be unbounded. This completes the proof of the global bifurcation result.

\begin{thm}
		Suppose that $\mathcal{R}_0>1$,
	\[
	m \geq \frac{\sigma_0 \iota \varpi\left(1-\sigma_0\right)}{\left(d_0+\upsilon+\varpi\right)\left(d_0+\upsilon+\sigma_0 \varpi\right)},
	\]
	and conditions \eqref{eq:H} and \eqref{eq:P} are satisfied.
	Then the connected component
	\[
	\mathcal{C}\left(E^*, \tau_n^{(k)}, \frac{2\pi}{w_k\tau_n^{(k)}}\right)
	\]
	is unbounded in $C([-1,0],\mathbb{R}^4)\times\mathbb{R}_+\times\mathbb{R}_+$.
	Furthermore, for each integer $n\ge 0$, there exist arbitrarily large $\tau>\tau_n^{(k)}$ such that system \eqref{1.1}  possesses at least one nontrivial periodic solution.
\end{thm}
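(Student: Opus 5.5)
The proof will apply Wu's global Hopf bifurcation theorem \cite[Theorem 3.3]{wu1998symmetric} to the rescaled system \eqref{6.1}, and then show that the connected component cannot be unbounded in the state or period directions, so it must be unbounded in $\tau$.

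\textbf{Hypotheses (A1)--(A4).} (A1) and (A3) follow from the explicit $C^2$ form of $F$ in \eqref{6.2}. (A2) is the absence of a zero characteristic root at $E^0$ and $E^*$. At $E^*$ this reduces to $a_{14}+a_{24}\neq 0$, which is part of \eqref{eq:H}. At $E^0$ the nonzero root $(d_0+\gamma_0+d)(\mathcal R_0-1)$ and the negative roots from Theorem 3.1 settle it. (A4) is that each $(E^*,\tau_n^{(k)},2\pi/(w_k\tau_n^{(k)}))$ is an isolated center with $J=\{1\}$. This comes from Theorem 5.1 together with transversality: $H'(m_k^*)\neq0$ gives a nonzero crossing number, as recorded in \eqref{6.3}--\eqref{6.4}.

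\textbf{Excluding a bounded branch.} By the theorem, either $\mathcal C$ is unbounded, or it is bounded and the crossing numbers of the equilibria it meets sum to zero. Under the standing assumptions, $\mathcal N(F)$ contains only $E^0$ and $E^*$. Boundedness away from $E^0$ (Lemma 6.1) and $E^0$ not being a center keep the relevant equilibria on $\mathcal C$ to centers at $E^*$, each with $\gamma_1$ of the same sign. So the sum is nonzero, and $\mathcal C$ is unbounded in $X\times\mathbb R_+\times\mathbb R_+$.

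\textbf{Ruling out unboundedness in state and period.} In the state direction, Lemma 6.1 bounds all nontrivial periodic solutions: Theorem 2.2 gives upper bounds and Theorem 4.2 gives persistence.

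In the period direction, the plan is as follows:
\begin{enumerate}
\item Lemma 6.2 excludes period-$1$ orbits, using Theorem 4.4 for the delay-free system obtained when $z(t-1)=z(t)$. Hence no period $1/j$ occurs either, since such an orbit would also have period $1$.
\item At the bifurcation point, \eqref{5.7} gives $w_k\tau_n^{(k)}>2n\pi$ for $n\ge1$, so $T=2\pi/(w_k\tau_n^{(k)})<1$ and $T>1/(n+1)$ once the arcsine term is controlled.
\item Along the connected set, $T$ varies continuously. It can neither cross the forbidden values $1/j$, nor reach $0$, since periodic orbits of a Lipschitz system with bounded vector field have a period bounded below. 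So $T$ stays in a bounded interval.
\end{enumerate}
Since $\mathcal C$ is unbounded but bounded in state and period, its projection onto the $\tau$-axis is unbounded. It lies in $(0,\infty)$ and, being connected, contains $[\tau_n^{(k)},\infty)$. This gives periodic solutions of \eqref{1.1} for arbitrarily large $\tau$.

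\textbf{Main obstacle.} The hard part is the period estimate. Showing $T$ stays strictly between $1/(n+1)$ and $1$ needs care with the arcsine term in \eqref{5.7}, and with the case $n=0$, where $T$ may exceed $1$. For $n=0$ I would try bounding $T$ above using the uniform lower bound on amplitude oscillations given by Lemma 6.1. Failing that, I would state the result only for $n\ge1$, as the displayed inequality suggests.
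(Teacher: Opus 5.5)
Your outline follows the paper's proof step for step: Wu's theorem for the rescaled system \eqref{6.1}, (A1)--(A4), exclusion of the bounded alternative by crossing numbers, Lemma 6.1 for the state direction, and Lemma 6.2 with ``period $1/j$ implies period $1$'' for the period direction. Your check of (A2) at both $E^0$ and $E^*$ is actually more complete than the paper's.

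\textbf{The case $n=0$.} The genuine gap is the one you flag yourself, and your proposed remedy does not close it. Take the phase correctly as $\theta_k\in(0,2\pi)$, determined by both equations in \eqref{5.2}. The $\arcsin$ in \eqref{5.7} picks the wrong branch in general, and $\theta_k\neq0$ by \eqref{eq:H}. Then the center at $\tau_n^{(k)}$ has $T=2\pi/(\theta_k+2n\pi)\in\big(\tfrac{1}{n+1},\tfrac{1}{n}\big)$. Connectedness together with Lemma 6.2 keeps $T$ in this interval along the whole branch. That bounds $T$ only when $n\ge1$. For $n=0$ the same reasoning gives merely $T>1$. So nothing prevents the first branch from becoming unbounded in the $T$-direction while $\tau$ stays bounded, for instance by approaching a homoclinic orbit.

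An amplitude argument cannot rescue this. Lemma 6.1 gives upper bounds and persistence lower bounds on the components, not a lower bound on oscillation size. Even such a bound would not control the period: orbits near a homoclinic loop have amplitude bounded below and period tending to infinity. The paper's proof has exactly the same hole. Its displayed inequality is stated only for $n=1,2,\dots$, although the theorem is asserted for $n\ge0$. What this route really proves is the statement for $n\ge1$. The $n=0$ branch needs an independent a priori bound on periods, which neither you nor the paper provides.

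\textbf{The sign of the crossing numbers.} A second unsupported step, also shared with the paper (which asserts $\gamma_1=-1$ at every center), is your claim that all centers of $E^*$ met by $\mathcal C$ carry $\gamma_1$ of the same sign. The sign of the crossing number at $\tau_n^{(k)}$ is fixed by the sign of $H'(m_k^*)$; the paper's value $-1$ corresponds to $H'(m_k^*)>0$. But $H'$ alternates in sign at consecutive simple positive roots of $H$. So as soon as \eqref{5.5} has two or more positive roots, there are centers with opposite crossing numbers. Examples are the centers at $\tau_n^{(k)}$ and $\tau_n^{(k')}$, whose periods even lie in the same interval $\big(\tfrac{1}{n+1},\tfrac{1}{n}\big)$. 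A bounded branch joining two such centers, with crossing numbers summing to zero, is exactly what Wu's alternative (ii) permits, and nothing in your argument rules it out. To exclude it you need an extra hypothesis, for example that $H$ has a unique positive root $m_0^*$ which is simple. Then $H'(m_0^*)>0$, and every center of $E^*$ has the same crossing number, so the sum cannot vanish.
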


\section{Numerical Simulations}
The following section presents numerical experiments that illustrate and support the theoretical results obtained above. We conduct the global Hopf bifurcation analysis with the MATLAB package \emph{DDE-BIFTOOL}, which was originally introduced by Engelborghs et al.~\cite{Engelborghs,10.1145/513001.513002}.

The parameters of system \eqref{1.1} used in numerical simulations are given below:
\[
\begin{aligned}
	d_0&=0.08,\ \iota=0.003,\ \varpi=0.8,\ \upsilon=0.01,\ \vartheta=0.4,\\
	\sigma_0&=0.9,\ m=0.1,\ \gamma_0 =0.15,\ \alpha=0.04,\ d=0.05,
\end{aligned}
\]
with initial conditions $S_0 = 40$, $V_0 = 20$, $I_0 = 8$, $R_0 = 4$.

\begin{figure}[htbp] 
	\centering
	\begin{subfigure}{0.3\textwidth}
		\centering
		\includegraphics[width=8cm,height=5cm]{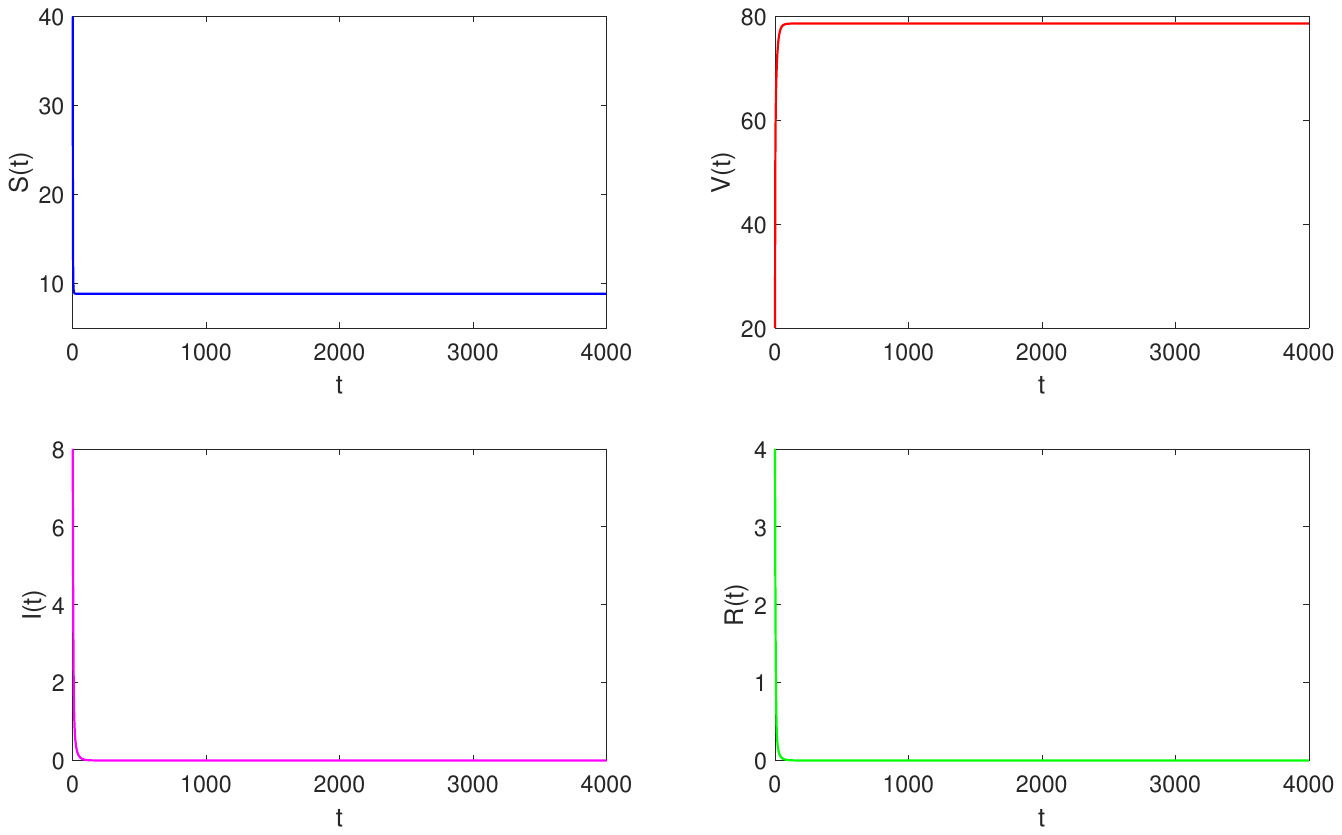}
		\caption{}
		\label{fig:image1}
	\end{subfigure}%
	\hfill
	\begin{subfigure}{0.3\textwidth}
		\centering
		\includegraphics[width=4.5cm,height=4.3cm]{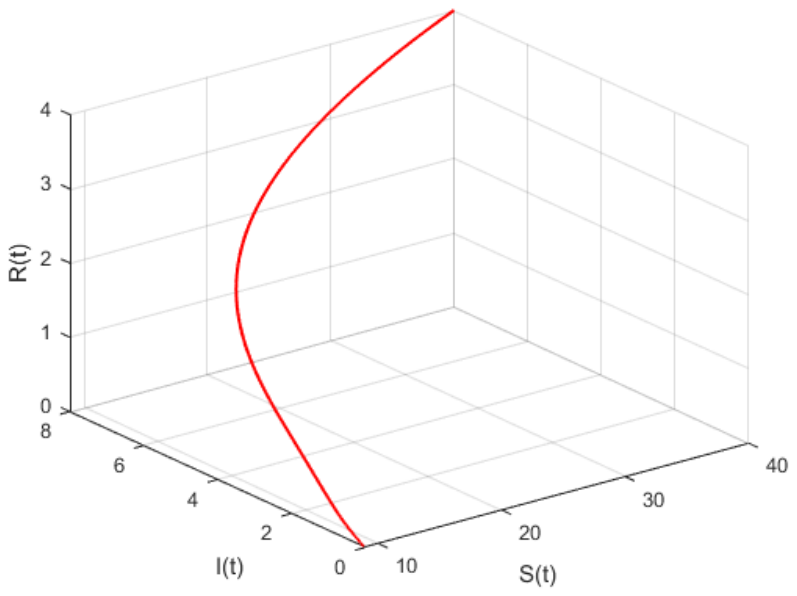}
		\caption{}
		\label{fig:image2}
	\end{subfigure}%
	\vspace*{0.5cm}
	\caption{ For $\Lambda =7$ and $\tau=6$, the disease-free equilibrium $E^0$ is globally asymptotically stable whenever $\mathcal{R}_0 < 1$.}
	\label{fig}
\end{figure}

\begin{figure}[htbp] 
	\centering
	\begin{subfigure}{0.3\textwidth}
		\centering
		\includegraphics[width=8cm,height=5cm]{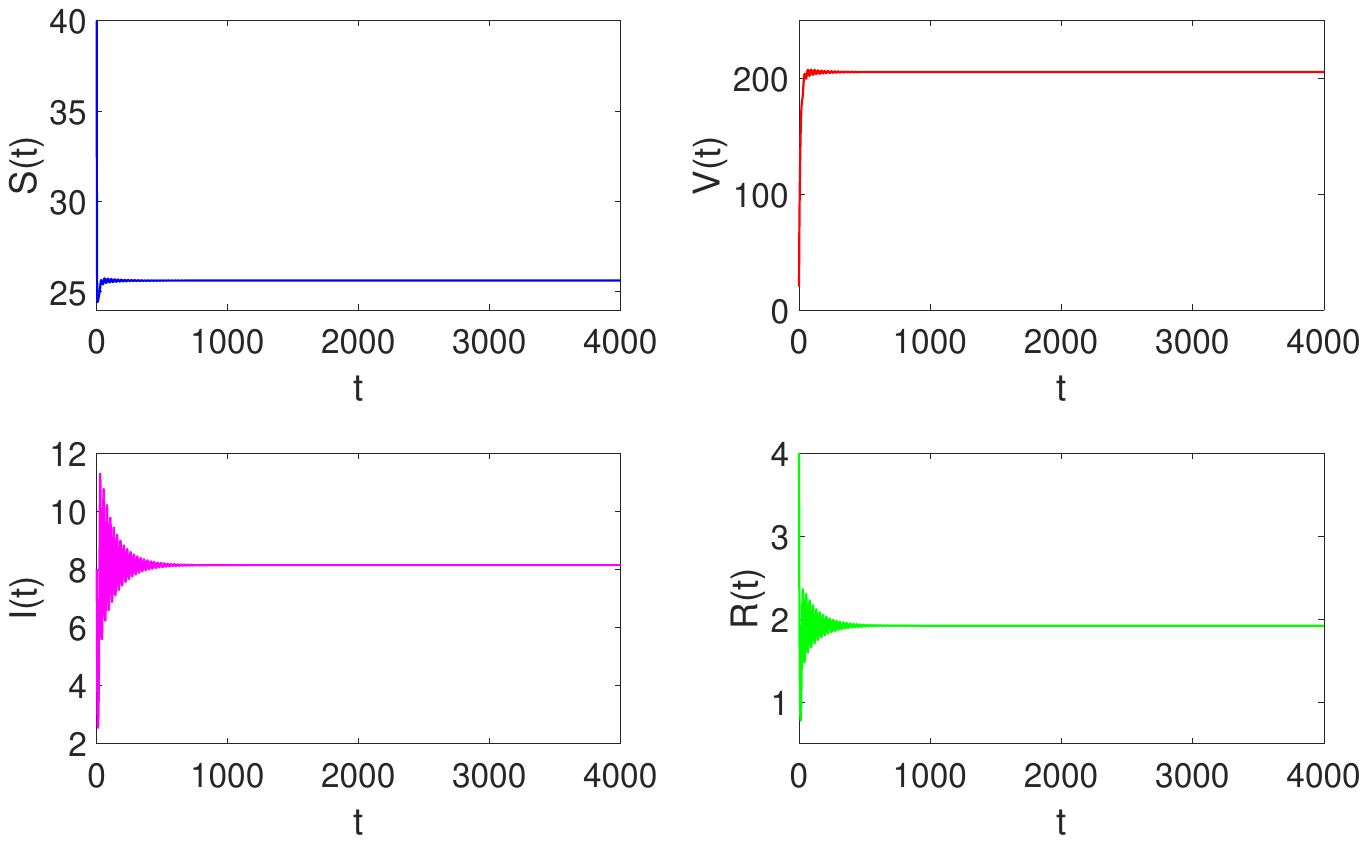}
		\caption{}
		\label{fig:image1}
	\end{subfigure}%
	\hfill
	\begin{subfigure}{0.3\textwidth}
		\centering
		\includegraphics[width=4.5cm,height=4.3cm]{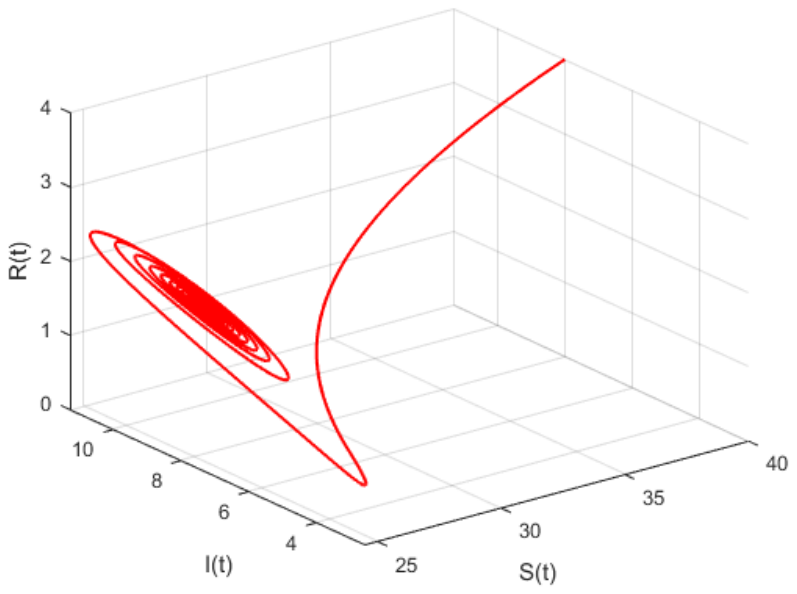}
		\caption{}
		\label{fig:image2}
	\end{subfigure}%
	\vspace*{0.5cm}
	\caption{ For $\Lambda =20$ and $\tau=6.5$, the endemic equilibrium $E^*$ remains globally asymptotically stable when $\mathcal{R}_0 > 1$.}
	\label{fig}
\end{figure}
\begin{figure}[htbp] 
	\centering
	\begin{subfigure}{0.3\textwidth}
		\centering
		\includegraphics[width=8cm,height=5cm]{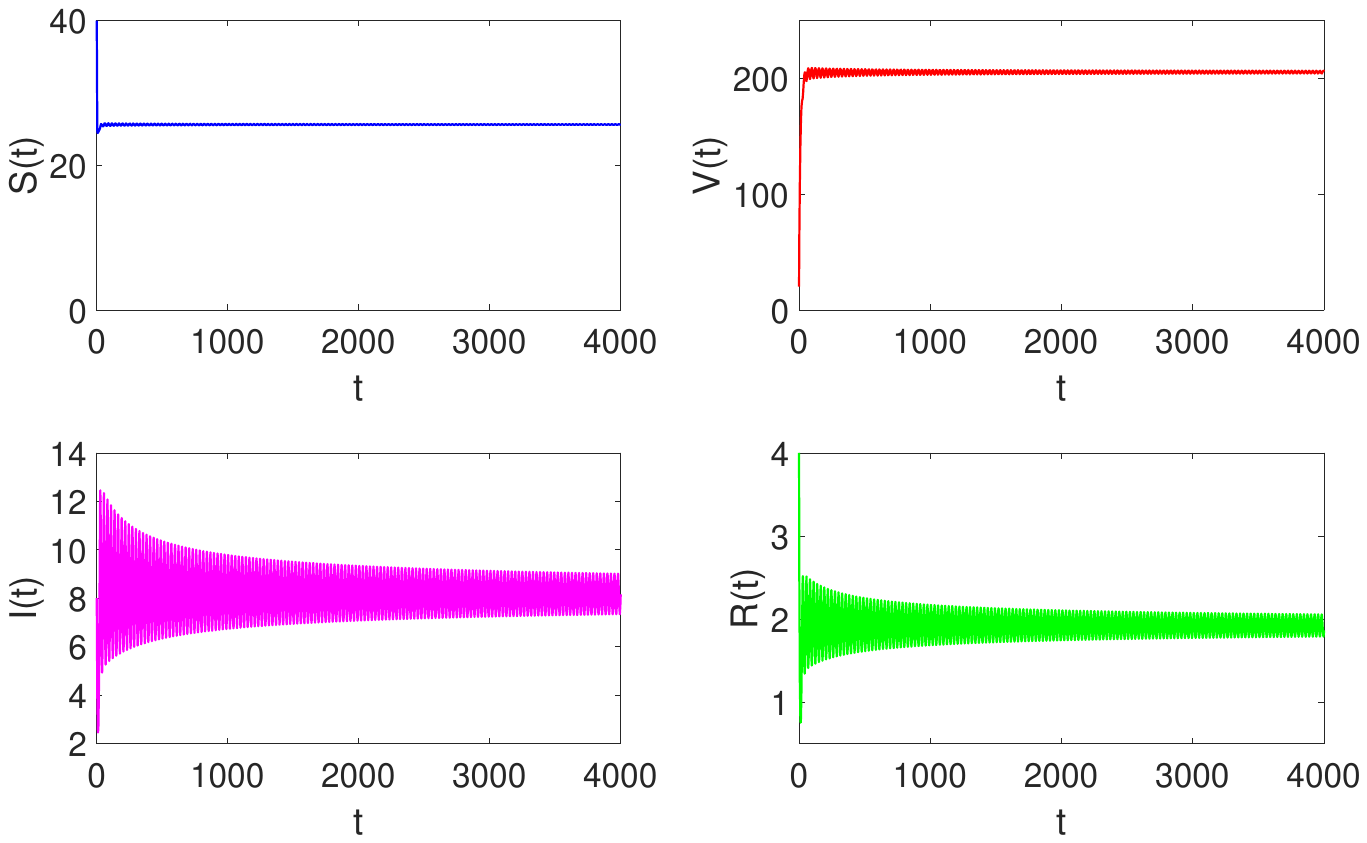}
		\caption{}
		\label{fig:image1}
	\end{subfigure}%
	\hfill
	\begin{subfigure}{0.3\textwidth}
		\centering
		\includegraphics[width=4.5cm,height=4.3cm]{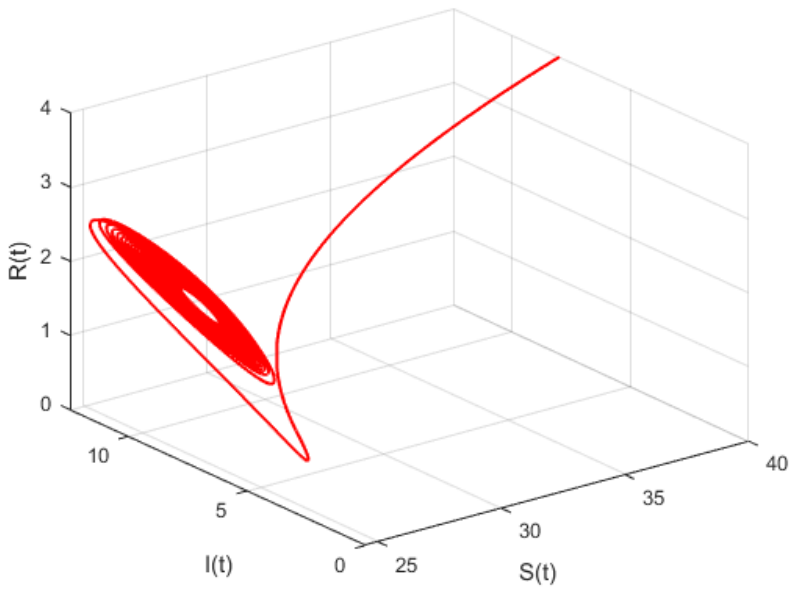}
		\caption{}
		\label{fig:image2}
	\end{subfigure}%
	\vspace*{0.5cm}
	\caption{ For $\Lambda =20$ and $\tau=7$, the endemic equilibrium $E^*$ becomes unstable when $\mathcal{R}_0 > 1$.}
	\label{fig}
\end{figure}
\begin{figure}[htbp] 
	\centering
	\begin{subfigure}{0.3\textwidth}
		\centering
		\includegraphics[width=8cm,height=5cm]{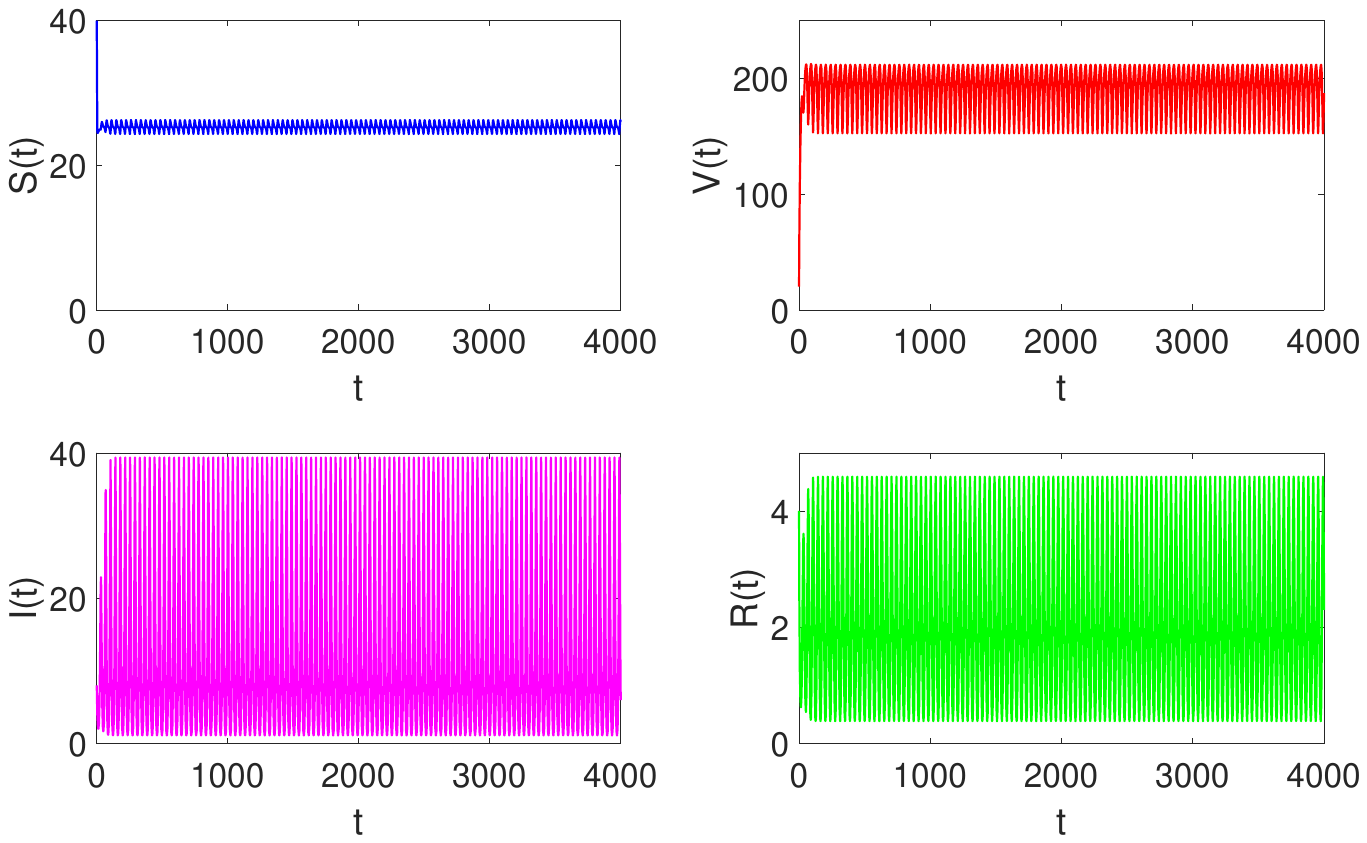}
		\caption{}
		\label{fig:image1}
	\end{subfigure}%
	\hfill
	\begin{subfigure}{0.3\textwidth}
		\centering
		\includegraphics[width=4.5cm,height=4.3cm]{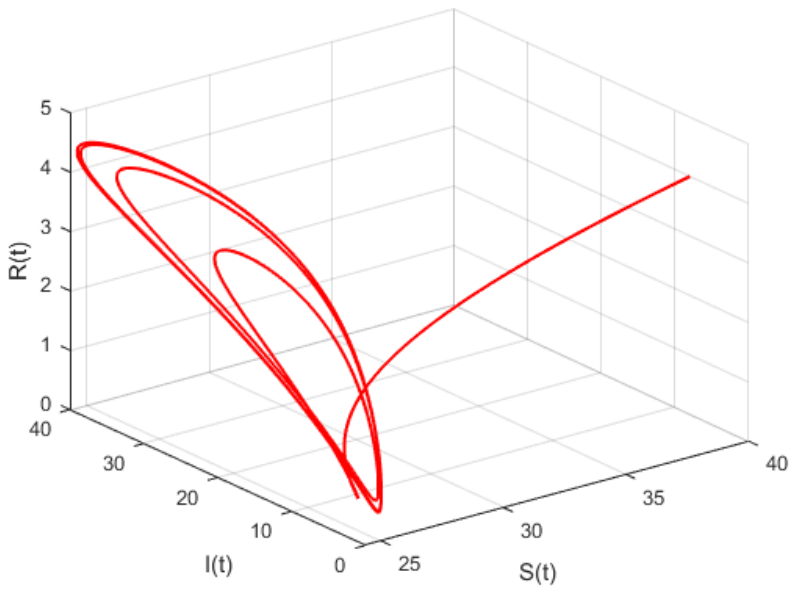}
		\caption{}
		\label{fig:image2}
	\end{subfigure}%
	\vspace*{0.5cm}
	\caption{ For $\Lambda =20$ and $\tau=10$, the endemic equilibrium $E^*$ is unstable when $\mathcal{R}_0 > 1$.}
	\label{fig}
\end{figure}
\begin{figure}
	\centering
	\includegraphics[width=7cm,height=7cm]{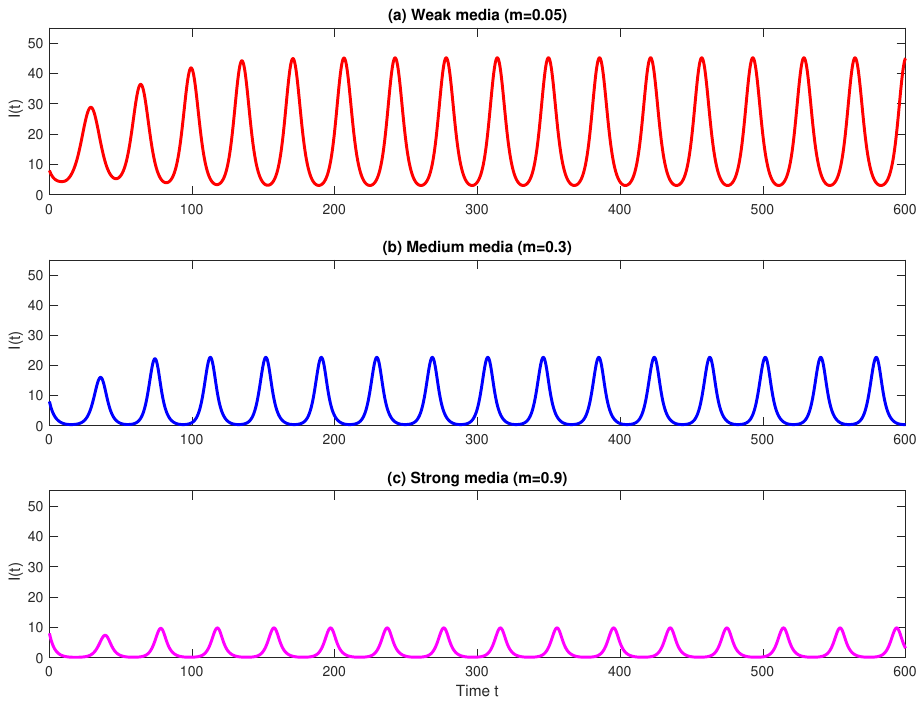}
	\caption{Effect of $m$ on system oscillation amplitude}
	\label{fig:5}
\end{figure}
When $\Lambda =7$ and $\tau=6$, the basic reproduction number is 
$\mathcal{R}_0=0.8532<1$. Moreover,
\[
\frac{\iota\Lambda}{d_0(d_0+\gamma_0+d)}
=
\frac{0.003\times 7}{0.08(0.08+0.15+0.05)}
=0.9375<1.
\]
Therefore, according to Theorem~4.1, the disease-free equilibrium $E^0$ 
is globally asymptotically stable, as illustrated in Figure~2. 
When $\Lambda =20$, we obtain $\mathcal{R}_0=2.4378>1$, which ensures 
the existence and uniqueness of the endemic equilibrium $E^*$. 
By analyzing the characteristic equation, we obtain the critical delay 
$\tau_0=6.9983$, as well as the bifurcation sequence 
$\tau_1=34.0371$, $\tau_2=61.0758,\cdots$. 
Figure~3 shows that the endemic equilibrium 
$E^*=(25.6194,205.4953,8.1362,1.9183)$ is asymptotically stable when 
$\tau=6.5<\tau_0$. When $\tau=7>\tau_0$, $E^*$ loses stability and a 
Hopf bifurcation occurs, accompanied by the appearance of stable periodic 
solutions (Figure~4). When $\tau=10>\tau_0$, the system exhibits 
persistent periodic oscillations (Figure~5).

The media impact coefficient $m$ has a significant regulatory effect on the system dynamics. As shown in Figure 6, larger values of $m$ suppress the amplitude of oscillations and exert a strong stabilizing effect on the epidemic system.

\begin{figure}[htbp]  
	\centering
	\includegraphics[width=0.5\textwidth]{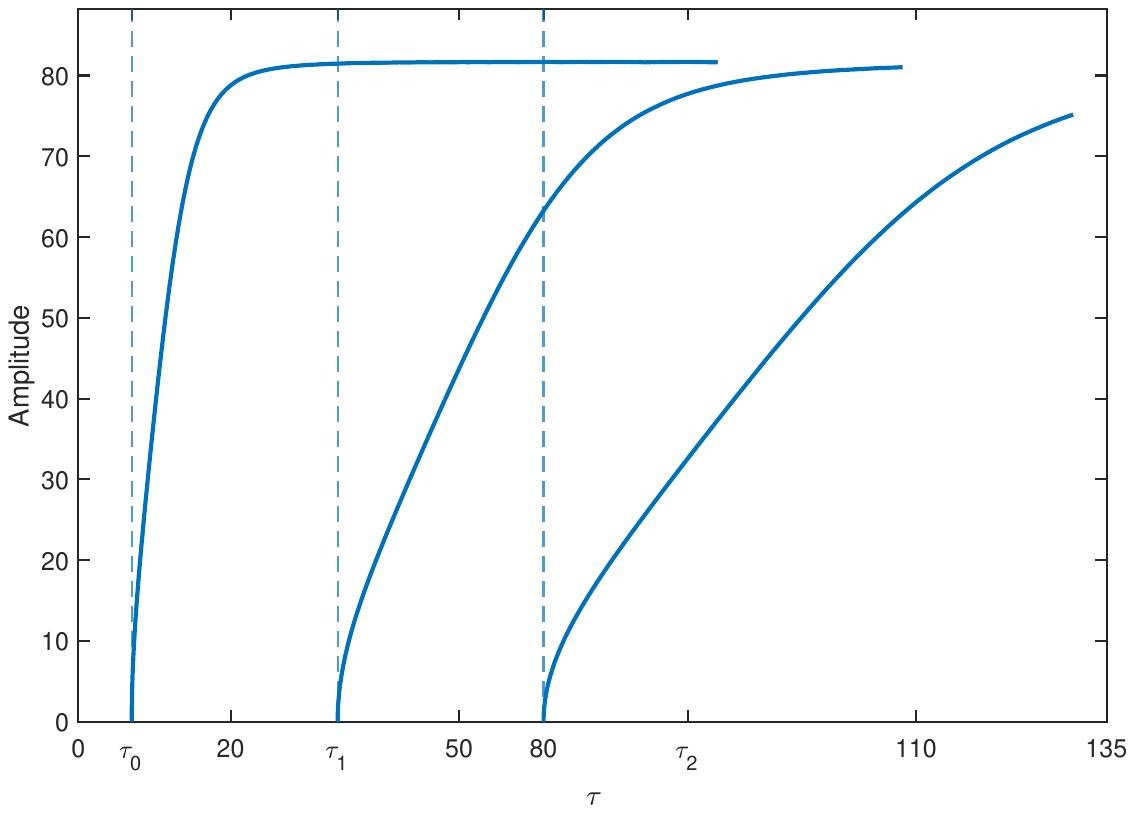} 
	\caption{Periodic solution branches for system \eqref{1.1} corresponding to $\tau_0=6.9983$, $\tau_1=34.0371$, and $\tau_2=61.0758$.}
	\label{fig:6}
\end{figure}
\begin{figure}
	\centering
	\includegraphics[width=0.5\textwidth]{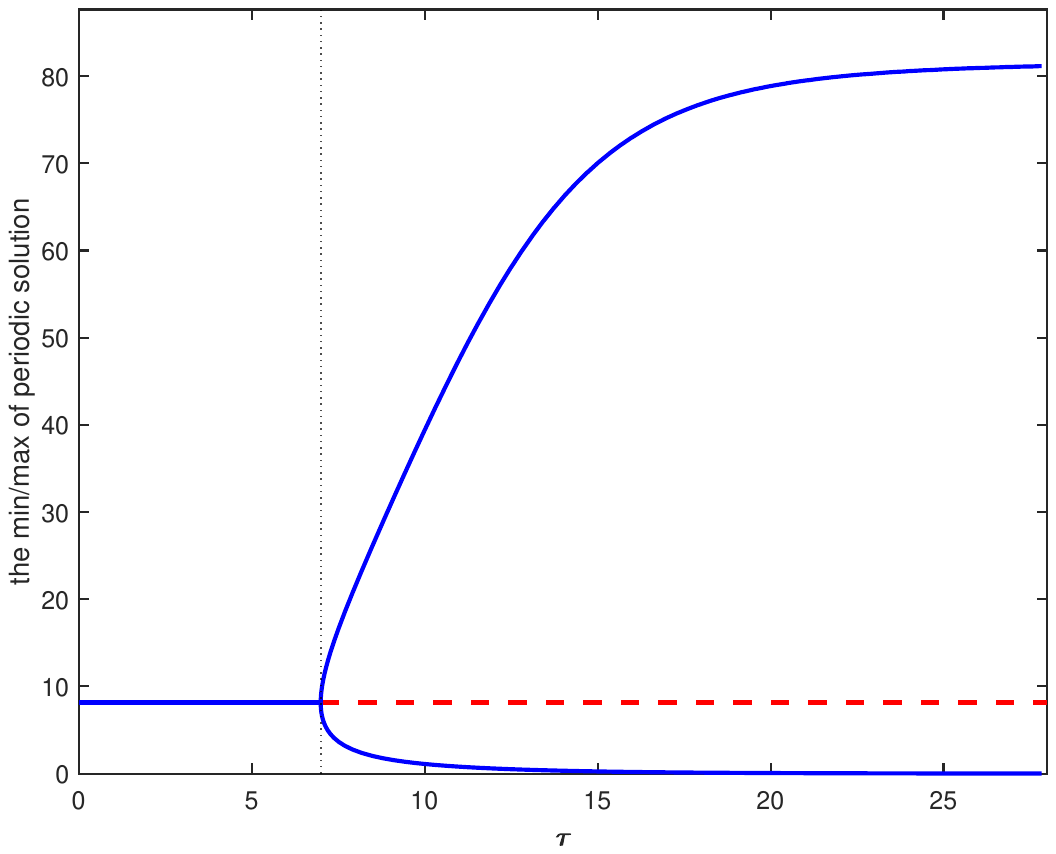}
	\caption{Bifurcation diagram of system \eqref{1.1}; red dashed curves denote unstable equilibria.}
	\label{fig:6}
\end{figure}
To verify the global continuation of bifurcating periodic solutions, we present the global Hopf branches at $\tau_0=6.9983$, $\tau_1=34.0371$, and $\tau_2=61.0758$ in Figure 7. These branches extend continuously and remain unbounded as the time delay $\tau$ increases, which indicates that the periodic solutions generated by local Hopf bifurcation do not terminate in a finite interval of $\tau$, but exist globally for sufficiently large delays. This numerically confirms the conclusion of Theorem 6.1 that the connected component of periodic solutions is unbounded.

Finally, by selecting $\tau$ as the bifurcation parameter, we present the corresponding one-parameter bifurcation diagram in Figure 8, in which the red dashed curve denotes the unstable endemic equilibrium. It clearly illustrates the stability transition of $E^*$, the emergence of Hopf bifurcation, and the global continuation of periodic oscillations as $\tau$ increases.
\section{Conclusion}
This paper focuses on the bifurcation dynamics of an SVIRS epidemic system with delayed media coverage. The model incorporates vaccination, temporary immunity, media-related behavioral responses, and the saturation effect of medical treatment, providing a more realistic description of epidemic transmission.

We rigorously establish the positivity and uniform boundedness of solutions and derive the basic reproduction number $\mathcal{R}_0$. The disease-free equilibrium is shown to be globally asymptotically stable under a sufficient condition stronger than $\mathcal{R}_0<1$. For $\mathcal{R}_0>1$, the existence and uniqueness of an endemic equilibrium and uniform persistence of the disease are established under suitable additional conditions. Moreover, under further sufficient conditions, the endemic equilibrium is globally asymptotically stable.

To explore the influence of delayed media responses, the delay parameter $\tau$ is chosen as the bifurcation parameter. The analysis shows that the endemic equilibrium remains locally stable for small delays, while larger delays may induce periodic oscillations through Hopf bifurcation. The direction and stability of bifurcating periodic solutions are investigated using center manifold reduction and normal form theory. In addition, the global continuation of periodic branches is obtained by applying the global Hopf bifurcation theorem.

Numerical simulations further illustrate the theoretical results and indicate that stronger media coverage can effectively weaken oscillatory outbreaks and enhance the stability of the epidemic system. These findings demonstrate the joint influence of vaccination, media coverage, and delayed behavioral responses on epidemic dynamics and control.
\vskip 20 pt
\noindent{\bf Acknowledgement}
\vskip 10 pt
The first author is partially supported by the National Key Research and Development Program of China (Grant No. 2020YFA0713100).
\nocite{*}
\bibliography{k} 
\end{document}